\newtheorem{fact}{Fact}[section]
\newtheorem{lemma}[fact]{Lemma}
\newtheorem{theorem}[fact]{Theorem}
\newtheorem{definition}[fact]{Definition}
\newtheorem{example}[fact]{Example}
\newtheorem{rremark}[fact]{Remark}
\newenvironment{remark}{\begin{rremark} \rm}{\end{rremark}}
\newtheorem{proposition}[fact]{Proposition}
\newtheorem{corollary}[fact]{Corollary}
\DeclareMathOperator\C{\mathbb C}
\DeclareMathOperator\Z{\mathbb Z}
\DeclareMathOperator\Gr{Gr}
\DeclareMathOperator\Grkn{Gr_k(\C^n)}
\DeclareMathOperator\TGrkn{T^*\!\Gr_k(\C^n)}
\DeclareMathOperator\spa{span}
\DeclareMathOperator\Loc{Loc}
\DeclareMathOperator\hor{hor}
\DeclareMathOperator\ver{ver}
\DeclareMathOperator\Sym{Sym}
\DeclareMathOperator\id{{id}}
\DeclareMathOperator\Id{Id}
\DeclareMathOperator\End{End}
\DeclareMathOperator\Det{Det}
\DeclareMathOperator\Stab{Stab}
\def\II{{\mathcal I}}
\def\Ik{{\II_k}}
\def\XX{{\mathcal X}}
\def\Ibar{\bar{I}}
\def\zz{{\boldsymbol z}}
\def\CCn{(\C^2)^{\otimes n}}
\def\Czy{\C[\zz,y]}
\def\fratop{\genfrac{}{}{0pt}1}
\def\satop#1#2{\fratop{\scriptstyle#1}{\scriptstyle#2}}
\def\tt{{\boldsymbol t}}
\def\Wt{\tilde{W}}
\def\ox{\otimes}
\def\la{\lambda}
\def\B{{\mathcal B}}
\def\L{{\mathcal L}}
\def\si{\sigma}
\def\Imin{I^{min}}
\def\Imax{I^{max}}
\def\R{{\mathcal R}}
\def\ts{\tilde{s}}
\def\V{{\CCn\ox \L_{\la,(\zz,y)}}}
\def\ep{\epsilon}
\newcommand{\slt}{{\frak{sl}_2}}
\newcommand{\es}{{E_y(\frak{gl}_2)}}
\def\h{{\mathfrak h}}
\let\ga\gamma
\let\Ga\Gamma
\let\al\alpha
\let\dl\delta
\let\on\operatorname
\let\bi\bibitem
\newcommand{\Ref}[1]{{\rm(\ref{#1})}}
\newcommand{\bea}{\begin{eqnarray*}}
\newcommand{\eea}{\end{eqnarray*}}
\newcommand{\bean}{\begin{eqnarray}}
\newcommand{\eean}{\end{eqnarray}}
\author
[R.\,Rim\'anyi,  A.\,Varchenko]
{ R.\,Rim\'anyi$\,^{\star}$,
 A.\,Varchenko$\,^\diamond$}
\title[Dynamical Gelfand-Zetlin algebra  and equivariant cohomology ]{ Dynamical Gelfand-Zetlin algebra and equivariant cohomology of Grassmannians}
\begin{document}

\maketitle

\begin{center}
{\it Department of Mathematics, University
of North Carolina at Chapel Hill\\ Chapel Hill, NC 27599-3250, USA}
\end{center}

{\let\thefootnote\relax
\footnotetext{
\noindent
$^\star${ E-mail}: rimanyi@email.unc.edu, 
supported in part by NSF grant DMS-1200685,
\\
\phantom{aa}
$^\diamond$  {E-mail}:  anv@email.unc.edu,
supported in part by NSF grant DMS-1362924 and Simons Foundation.}}

\begin{abstract}
We consider the rational dynamical quantum group $\es$ and introduce an $\es$-module
structure on $\oplus_{k=0}^n H^*_{GL_n\times\C^\times}(\TGrkn)'$, where 
$ H^*_{GL_n\times\C^\times}(\TGrkn)'$ is the equivariant cohomology algebra $H^*_{GL_n\times\C^\times}(\TGrkn)$
of the cotangent bundle of the Grassmannian $\Gr_k(\C^n)$ with coefficients
extended by a suitable ring of rational functions in an additional variable $\la$. We consider the dynamical
Gelfand-Zetlin algebra which is a commutative algebra of difference operators in $\la$. We show that
the action of the Gelfand-Zetlin algebra on  $H^*_{GL_n\times\C^\times}(\TGrkn)'$ is the natural action of the algebra
$H^*_{GL_n\times\C^\times}(\TGrkn)\otimes \C[\delta^{\pm1}]$ on $H^*_{GL_n\times\C^\times}(\TGrkn)'$, where $\delta : \zeta(\la)\to
\zeta(\la+y)$ is the shift operator.

The $\es$-module structure on $\oplus_{k=0}^n H^*_{GL_n\times\C^\times}(\TGrkn)'$ is introduced with the help of dynamical stable envelope maps  which are dynamical analogs of the stable envelope maps introduced by Maulik and Okounkov, \cite{MO}. The dynamical stable envelope maps are defined in terms of the rational dynamical weight functions introduced in \cite{FTV} to construct q-hypergeometric solutions of rational qKZB equations. The cohomology classes in $ H^*_{GL_n\times\C^\times}(\TGrkn)'$  induced by the weight functions are dynamical variants of 
Chern-Schwartz-MacPherson classes of Schubert cells.

\end{abstract}

{\small \tableofcontents }

\section{Introduction}

In \cite{MO}, Maulik and Okounkov study the classical and quantum equivariant
cohomology of Nakajima quiver varieties for a quiver $Q$. They construct a Hopf algebra
$Y_Q$, called the Yangian of $Q$, acting on the cohomology of these
varieties, and show that the  Bethe algebra $\B^q$ of this action, depending on some parameters $q$ and acting on the cohomology of these
varieties coincides with the algebra of quantum multiplication.
 If $q\to \infty$, the limiting Bethe algebra $\B^\infty$,  called the Gelfand-Zetlin algebra, is isomorphic to the algebra of the standard multiplication on the cohomology.
The construction of the Yangian and the Yangian action is based on the notion
of the stable envelope maps introduced in \cite{MO}.
In this paper we construct the dynamical  analog of the stable envelope maps for the
equivariant cohomology algebras of the cotangent bundles of  Grassmannians.

Let $(\C^\times)^n\subset GL_n$ be the torus of diagonal matrices. The groups $(\C^\times)^n\subset GL_n$ act
on $\C^n$ and hence on the cotangent bundle $\TGrkn$ of a Grassmannian.   Extend these $(\C^\times)^n\subset GL_n$ actions to the actions of
$T=(\C^\times)^n\times\C^\times\subset GL_n\times\C^\times$ in such a way that the extra $\C^\times$ acts on the fibers of $\TGrkn\to\Grkn$ by multiplication.
 Consider the equivariant cohomology algebra
$H^*_T(\TGrkn)$. In this situation Maulik and Okounkov define the stable envelope maps
\bea
\Stab_\si : \oplus_{k=0}^n
H^*_T((\TGrkn)^T)
\to
\oplus_{k=0}^n
H^*_T(\TGrkn),
\eea
where $(\TGrkn)^T\subset \TGrkn$ is the fixed point set with respect to the action of $T$ and $\si$ is an element of the symmetric group $S_n$.
They describe the composition maps $\Stab^{-1}_{\:\si'}\circ\Stab_{\:\si}$
in terms of the standard $\frak{gl}_2$ rational \,R-matrix and these R-matrices give 
a Yangian $Y(\frak{gl}_2)$-module structure on $\oplus_{k=0}^n
H^*_T(\TGrkn)$, see also \cite{RTV1}.

A similar construction can be performed  for the equivariant K-theory algebras $K_T(\TGrkn)$, see \cite{RTV2}. In that case
the composition maps $\Stab^{-1}_{\:\si'}\circ\Stab_{\:\si}$ are described
in terms of the standard $\frak{gl}_2$ trigonometric  \,R-matrix and these R-matrices give 
a  $\frak{gl}_2$ quantum loop algebra action on 
 $\oplus_{k=0}^n K_T((\TGrkn)^T)$ similar to the quantum loop algebra action
studied by Ginzburg and Vasserot in \cite{GV,Vas1,Vas2}.  

Let $\la$ be a new variable. In this paper we extend the coefficients $H^*_T((\TGrkn)^T) \subset H^*_T((\TGrkn)^T)'$ and
 $H^*_T(\TGrkn) \subset H^*_T(\TGrkn)'$ by suitable rational functions in $\la$ and then define dynamical stable envelope maps 
\bean
\label{Dyn}
\Stab_\si : \oplus_{k=0}^n
H^*_T((\TGrkn)^T)'
\to
\oplus_{k=0}^n
H^*_T(\TGrkn))', \qquad \si\in S_n.
\eean
We describe the composition maps $\Stab^{-1}_{\:\si'}\circ\Stab_{\:\si}$ 
in terms of the rational dynamical R-matrix 
\bea
\label{RM}
R(\la,z,y)=
\left( \begin{array}{cccc}
1 & 0 & 0 & 0\\
0 & \frac{(\lambda + y)z}{\lambda(z-y)} &  -\frac{(\lambda + z)y}{\lambda(z-y)} &0
\\
0 & -\frac{(\lambda - z)y}{\lambda(z-y)}  & \frac{(\lambda - y)z}{\lambda(z-y)}&0
\\
0 & 0 & 0 & 1
\end{array} \right)
\eea
and define on 
 $\oplus_{k=0}^n H^*_T(\TGrkn)'$ a module structure over the rational dynamical quantum group
 $\es$.
 
 The elliptic dynamical quantum group $E_{\tau,\eta}(\frak{gl}_2)$ was introduced by G.\,Felder in \cite{F1, F2}, see
also \cite{FV1}-\cite{FV3}, \cite{FTV}. The rational dynamical group $\es$ is a suitable semi-classical limit $\tau \to i\infty$ of 
$E_{\tau,\eta}(\frak{gl}_2)$, where $\tau$ is the modular parameter.

Having the $\es$-module structure on $\oplus_{k=0}^n H^*_T(\TGrkn)'$ we consider the Gelfand-Zetlin algebra $\B$ of that module,
the dynamical analog of the Gelfand-Zetlin subalgebra of the Yangian $Y(\frak{gl}_2)$. The  Gelfand-Zetlin algebra $\B$ is a 
commutative algebra of  difference operators with respect to the variable $\la$. The Gelfand-Zetlin algebra preserves each of the terms
$H^*_T(\TGrkn)'$.  Let $\delta  : \zeta(\la) \to \zeta(\la +y)$ be the shift operator.
We show that the action of  $\B$ on $H^*_T(\TGrkn)'$ is the natural action of $H^*_{GL_n\times\C^\times}(\TGrkn)\otimes \C[\delta^{\pm1}]$
on $H^*_T(\TGrkn)'$, where 
$\C[\delta^{\pm1}]$ is the algebra of Laurent polynomials in $\delta$,  see Theorem \ref{thm MAIN}.

We show that the space $\oplus_{k=0}^n H^*_{GL_n\times\C^\times}(\TGrkn)'\subset \oplus_{k=0}^n H^*_T(\TGrkn)'$ is an $\es$-submodule and
show that for any $k$,
  the action of  $\B$ on $H^*_{GL_n\times\C^\times}(\TGrkn)'$ is the natural action of $H^*_{GL_n\times\C^\times}(\TGrkn)\otimes \C[\delta^{\pm1}]$
on $H^*_{GL_n\times\C^\times}(\TGrkn)'$, see Corollary \ref{cor last}.

\medskip
We define  the dynamical stable envelope maps for the cotangent bundles of Grassmannians by explicit formulas 
in terms of the $\frak{gl}_2$ rational dynamical weight functions introduced in \cite{FTV}.   Our motivation of this construction is the fact that  the stable envelope maps for the equivariant cohomology (resp. K-theory) of cotangent  bundles of
 Grassmannians were defined in terms of the rational (resp. trigonometric) $\frak{gl}_2$ weight functions from \cite{TV1}, and our
 goal was to study the stable envelope maps in the dynamical setting.

The rational dynamical weight functions were introduced in \cite{FTV}
to construct $q$-hypergeometric solutions of the rational qKZB equations. The arguments of the weight functions in
\cite{FTV} are $\la$,  $y$, $z_1,\dots,z_n$, $t_1,\dots,t_k$, where $\la$ runs through the one-dimensional Cartan subalgebra of $\frak{sl}_2$,
$y$ is the parameter of the dynamical quantum group $\es$, variables
$z_1,\dots,z_n$ are evaluation parameters of tensor factors
 and $t_1,\dots,t_k$ are the integration variables in
the $q$-hypergeometric integrals. Another interpretation in \cite{FTV} of  variables $t_1,\dots,t_k$ 
is that they are variables in
the Bethe ansatz equations associated with the dynamical XXX model. In this paper, variable $\la$ is an auxiliary parameter, which could be interpreted as the parameter corresponding to the hyperplane sections of Grassmannians, variables  $y$, $z_1,\dots,z_n$ are interpreted as the equivariant parameters corresponding
to the torus $T$ and the arguments \,$t_1,\dots,t_k$ are interpreted as the Chern
roots of the associated bundle over $\Grkn$. This correspondence between the
variables in the Bethe ansatz equations and the Chern roots is the indication of
a dynamical Landau-Ginzburg mirror correspondence.

In \cite{MO} in the case of Grassmannians, the commutative Bethe algebra $\B^q$ depends on quantum parameter $q$ which also corresponds to the hyperplane sections of Grassmannians. The  limit $q\to\infty$ of $\B^q$ gives the Gelfand-Zetlin algebra.
In  the dynamical setting our Gelfand-Zetlin algebra  does not have obvious deformations  and it is not clear  if there is a quantum parameter. One may speculate that the dynamical parameter $\la$ is an analog of the quantum parameter $q$ in  quantum cohomology.
 
Notice that the dynamical quantum group has one more nontrivial commutative
Bethe algebra $\tilde L_{11}(w)+\tilde L_{22}(w)$, studied in \cite{F1, F2, FV1, FV2, FV3, FTV}.

In the next paper we will extend the constructions and results of this paper to the case of the elliptic dynamical quantum group
 $E_{\tau,\,\eta}(\frak{gl}_2)$. That will give us an elliptic dynamical version of the $H^*_{GL_n\times\C^\times}(\TGrkn)\otimes \C[\delta^{\pm1}]$-module
  $H^*_{GL_n\times\C^\times}(\TGrkn)'$.  
 
 This paper can be considered as a continuation of the series of papers \cite{RSTV, GRTV, RTV1, RTV2, TV3} devoted to the geometrization  of the Bethe algebras in quantum integrable models.

\medskip
The paper is organized as follows.  In Section \ref{Preliminaries from Geometry} we collect geometric information on Grassmannians.
In Section \ref{sec:weightfunctions} we introduce the rational dynamical weight functions 
$\{W_{\si,I}\}$  and establish their recursion  and
orthogonality properties. In Section \ref{combI} we discuss the
diagrammatic interpretation of the combinatorics encoded in
the weight functions. In Section \ref{sec:interpolation} the interpolation properties of the weight functions are collected and important   classes $\{\kappa_{\si,I}\}\in \oplus_{k=0}^n H^*_T(\TGrkn)'$ are introduced.
The classes  $\{\kappa_{\sigma,I}\}$   are  dynamical deformations of 
the Chern-Schwartz-MacPherson classes of Schubert cells, see Remark \ref{rem CSM} and \cite{RV}.
In Section \ref{Stable envelope maps and R-matrices} we define the dynamical stable envelope maps
\[
\Stab_{\sigma}:
\oplus_{k=0}^n
H^*_T((\TGrkn)^T)'
\to
\oplus_{k=0}^n
H^*_T(\TGrkn))', \qquad 
1_I \mapsto \kappa_{\sigma,I},
\]
and calculate the composition maps $\Stab^{-1}_{\:\si'}\circ\Stab_{\:\si}$
in terms of the  dynamical R-matrix $R(\la,z,y)$.
In Sections \ref{sec:xi} and  \ref{sec:inv} we introduce a collection of elements $\{\xi_I\}_{I\in\Ik} \in H^*_T((\TGrkn)^T)'$
 and construct the map inverse to the map $\Stab_{\id}$.
In Section \ref{sec dyn g} we present information about the rational dynamical quantum group $\es$ and introduce the Gelfand-Zetlin algebra.
We show that the action of the Gelfand-Zetlin algebra on $H^*_T((\TGrkn)^T)'$ is diagonalizable.
Proposition \ref{eigen} says that the elements $\{\xi_I\}_{I\in\Ik}$ form a basis of eigenfunctions of the Gelfand-Zetlin algebra.
In Section \ref{on coho} we prove the main results:\, Theorem \ref{thm MAIN} and Corollary \ref{cor last}.

\medskip
The authors thank Giovanni Felder for useful discussions.
The second author thanks MPI in Bonn for hospitality.

\section{Preliminaries from Geometry}\label{sec:geo}
\label{Preliminaries from Geometry}

\subsection{Grasmannians}
\label{sec Partial flag varieties}

Fix a natural number $n$. For $k\leq n$ consider the Grassmannian $\Gr_k(\C^n)$ of linear subspaces of $\C^n$, and its cotangent bundle $\pi:\TGrkn \to \Grkn$. Let
\[
\XX_n=\coprod_{k=0}^n \TGrkn.
\]
The set of $k$-element subsets of $[n]:=\{1,\ldots,n\}$ will be denoted by $\Ik$. For $I\in \Ik$ let $\Ibar=[n]-I$.

Let $\ep_1,\ldots,\ep_n$ be the standard basis of $\C^n$. For $I\in \Ik$ let $x_I$ be the point in $\Grkn$ corresponding to the coordinate subspace $\spa\{\ep_i\ |\ i\in I\}$. We embed $\Grkn$ in $\TGrkn$ as the zero section and consider the points $x_I$ as points of $\TGrkn$.

\subsection{Schubert cells, conormal bundles}
For any $\sigma\in S_n$ we consider the full coordinate flag in $\C^n$
\[
V^\sigma: \ \ 0\subset V^\sigma_1 \subset V^\sigma_2 \subset \ldots \subset V^\sigma_n=\C^n,
\]
where $V^\sigma_i=\spa\{ \ep_{\sigma(1)},\ldots,\ep_{\sigma(i)}\}$. For $I\in \Ik$ define the Schubert cell
\[
\Omega_{\sigma,I}=\{ F\in \Grkn\ |\  \dim(F\cap V^\sigma_q)=\#\{i\in I\ |\ \sigma^{-1}(i)\leq q\}\ \text{for}\ q=1,\ldots,n\}.
\]
The Schubert cell is isomorphic to an affine space of dimension
\[
\ell_{\sigma,I}=\#\{(i,j)\in [n]^2 \ |\  i>j, \sigma(i)\in I, \sigma(j)\in \Ibar\}.
\]
For a fixed $\sigma$, the Grassmannian $\Grkn$ is the disjoint union of the cells $\Omega_{\sigma,I}$, see e.g. \cite[Sect.2.2]{FP}. We have $x_I\in \Omega_{\sigma,I}$ for any $\sigma$.

For $\sigma\in S_n$ we define the partial ordering on the set $\Ik$. 
For $I,J\in\Ik$, let
\[
\sigma^{-1}(I)=\{i_1 < \ldots < i_k\},
\qquad
\sigma^{-1}(J)=\{j_1 < \ldots < j_k\}.
\]
We say that $J\leq_{\sigma} I$ if $j_a\leq i_a$ for $a=1,\ldots, k$.

The Schubert cell $\Omega_{\sigma,I}$ is a smooth submanifold of $\Grkn$. Consider its conormal space
\[
C\Omega_{\sigma,I}=\{\alpha \in \pi^{-1}(\Omega_{\sigma,I}) \ | \
\alpha(T_{\pi(\alpha)}\Omega_{\sigma,I})=0\} \subset\TGrkn.
\]
It is  the total space of a vector subbundle of $\TGrkn$ over $\Omega_{\sigma,I}$.

\subsection{Equivariant cohomology}
\label{sec:equivH}

Let $(\C^\times)^n\subset GL_n=GL_n(\C)$ be the torus of diagonal matrices. The groups $(\C^\times)^n\subset GL_n$ act
on $\C^n$ and hence on the cotangent bundle $\TGrkn$.   We extend these $(\C^\times)^n\subset GL_n$ actions to the actions of
$T=(\C^\times)^n\times\C^\times\subset GL_n\times\C^\times$ in such a way that the extra $\C^\times$ acts on the fibers of $\TGrkn\to\Grkn$ by multiplication.

Consider the set of variables
\[
\Gamma=
\{\gamma_{1,1}, \gamma_{1,2},\ldots,\gamma_{1,k} , \gamma_{2,1},\gamma_{2,2},\ldots,\gamma_{2,n-k}\}.
\]
The group $S_k\times S_{n-k}$ acts on $\Gamma$ by permuting the $\gamma_{1,*}$ and $\gamma_{2,*}$ variables independently. Consider also variables $\zz=\{z_1,\ldots, z_n\}$ and $y$.
The group $S_n$ acts on the set $\zz$ by permutations.
The following presentation of the equivariant cohomology of the Grassmannian is well known.
We have
\begin{equation}
\label{eqn:Hpresentation}
H^*_T(\TGrkn)=\C[\Gamma,\zz,y]^{S_k\times S_{n-k}}/
\prod_{i=1}^k (u-\gamma_{1,i}) \prod_{i=1}^{n-k} (u-\gamma_{2,i}) -
\prod_{i=1}^n (u-z_i).
\end{equation}
Here the meaning of $\prod_{i=1}^k (u-\gamma_{1,i}) \prod_{i=1}^{n-k} (u-\gamma_{2,i}) -
\prod_{i=1}^n (u-z_i)$ is that the coefficient of every power of $u$ in this expression is set to be a relation. In other words, the relations in this algebra are of the form $f(\Gamma)=f(\zz)$ for symmetric polynomials $f$.
In this description the variables $\gamma_{1,*}$ (resp. $\gamma_{2,*}$) are the Chern roots of the tautological subbundle
(resp. quotient bundle) over $\Grkn$. The variables $z_i$ are the Chern roots corresponding to the factors of $T^n$.

Similarly we have the representation
\begin{equation}
\label{eqn:Hpr}
H^*_{GL_n\times\C^\times}(\TGrkn)=\C[\Gamma,\zz,y]^{S_k\times S_{n-k}\times S_n}/
\prod_{i=1}^k (u-\gamma_{1,i}) \prod_{i=1}^{n-k} (u-\gamma_{2,i}) -
\prod_{i=1}^n (u-z_i).
\end{equation}
We have the natural embedding
\bean
\label{Emb}
H^*_{GL_n\times\C^\times}(\TGrkn)\subset H^*_{T}(\TGrkn).
\eean

\subsection{Denominators} \label{sec:denom}

The cohomology algebra $H^*_T(\TGrkn)$
is a module over the polynomial algebra $\C[\zz,y]$. Later in this paper this and other $\C[\zz,y]$-modules will be considered with some permitted denominators, as follows.

Let $\L_{\zz,y}$ be the algebra consisting of rational functions of the form
\begin{equation}
\frac{f(\zz,y)}
{\prod_{i \not= j} (z_i-z_j)^{k_{ij}} \prod_{i,j} (z_i-z_j-y)^{l_{ij}} }
\end{equation}
where $f$ is a polynomial and $k_{ij}, l_{ij}$ are arbitrary nonnegative integers.

Let $\L_{(\zz,y)}$ be the algebra of all rational functions in $\zz, y$.

Let $\lambda$ be a new variable. Let $\L_{\la,y}$ be the algebra of rational functions
of the form $f/g$, where $f$ is a polynomial
in variables $\la, y$ and $g$ is a finite product of factors of the form $\la + ly$, where $l\in\Z$.

Let
\bean
\label{rings}
\L_{\lambda,\zz,y}=\L_{\lambda,y} \ox_{\C[y]} \L_{\zz,y},
\qquad
\L_{\lambda, (\zz,y)}=\L_{\lambda,y} \ox_{\C[y]} \L_{(\zz,y)}.
\eean

For a module $M$ over $\C[\zz,y]$ by $M\ox \L_{\zz,y}$,  $M\ox \L_{\la,\zz,y}$ or  $M\ox \L_{\la,(\zz,y)}$
 we mean the tensor products over $\C[\zz,y]$.

\subsection{Fixed point sets} \label{sec:fixed}

The set $(\TGrkn)^T$ of fixed points of the $T$-action is $(x_I)_{I\in\Ik}$. We have
\[
(\XX_n)^T = \XX_1 \times \ldots \times \XX_1, \qquad \text{($n$ factors)}.
\]
Let the standard basis vectors of $\C^2$ be $v_1$ and $v_2$. The algebra $H^*_T\bigl((\XX_n)^T\bigr)$ is naturally isomorphic to $\CCn \otimes \Czy$, with the isomorphism mapping the identity element $1_I\in H^*_T(x_I)$ to the vector
\[\label{v_I}
v_I=v_{i_1}\otimes \ldots \otimes v_{i_n},
\]
where $i_j=1$ if $j\in I$ and $i_j=2$ if $j\in \Ibar$. We denote by $\CCn_k$ the span of $\{v_I \ |\ |I|=k\}$.

\subsection{Equivariant localization} \label{sec:loc}

Consider the {\em equivariant localization} map
\[\label{eqn:loc}
\Loc: H^*_T(\TGrkn)\to H^*_T((\TGrkn)^T)= \bigoplus_{I\in\Ik} H^*_T(x_I)
\]
whose components are the restrictions to the fixed points $x_I$.
In the description (\ref{eqn:Hpresentation}), the $I$-component $\Loc_I$ of this map is the substitution
\[\label{LocI}
\{\gamma_{1,*}\}\mapsto\{z_a\ |\ a\in I\},
\qquad
\{\gamma_{2,*}\}\mapsto\{z_a\ |\ a\in \Ibar\}.
\]
For  $f(\Gamma,\zz,y) \in H^*_T(\TGrkn)$ the result of this substitution will be denoted by $f(\zz_I,\zz,y)$.

Equivariant localization theory (see e.g. \cite{ChG}) asserts that $\Loc$ is an injection of algebras.
Moreover, an element of the right-hand side is in the image of $\Loc$ if the difference
of the $I$-th and $s_{i,j}(I)$-th components is divisible by $z_i-z_j$ in $\Czy$ for any $i\ne j$.
Here $s_{i,j}\in S_n$ is the transposition of $i$ and $j$, and 
 $s_{i,j}(I)$ is the set obtained from $I$ by switching the numbers $i$ and $j$.
Hence the maps
\begin{equation}
\label{eqn:loc2}
\Loc: H^*_T(\Grkn)\ox \L_{\zz,y} \rightarrow \oplus_{I\in\Ik} H^*_T(x_I)\ox \L_{\zz,y},
\end{equation}
\[
\Loc: H^*_T(\Grkn)\ox \L_{\lambda,\zz,y} \rightarrow \oplus_{I\in\Ik} H^*_T(x_I)\ox \L_{\lambda,\zz,y}
\]
are isomorphisms.

\subsection{Fundamental class of $\Omega_{\sigma,I}$ at $x_I$} \label{sec:edefs}

Define the following classes
\[\label{eqn:ehor}
e_{\sigma,I,+}^{\hor}=\prod_{b<a}
\prod_{\satop{\sigma(a)\in I}{\sigma(b)\in \Ibar}} (z_{\sigma(b)}-z_{\sigma(a)}),
\qquad
e_{\sigma,I,-}^{\hor}=\prod_{b>a}
\prod_{\satop{\sigma(a)\in I}{\sigma(b)\in \Ibar}} (z_{\sigma(b)}-z_{\sigma(a)}),
\]
\[
e_{\sigma,I,+}^{\ver}=\prod_{b>a}
\prod_{\satop{\sigma(a)\in I}{\sigma(b)\in \Ibar}} (z_{\sigma(a)}-z_{\sigma(b)}+y),
\qquad
e_{\sigma,I,-}^{\ver}=\prod_{b<a}
\prod_{\satop{\sigma(a)\in I}{\sigma(b)\in \Ibar}} (z_{\sigma(a)}-z_{\sigma(b)}+y)
\]
in $H^*_T($pt$)=\Czy$. Here $hor$ and $ver$ refer to horizontal and vertical.

Recall that if $\C^\times$ acts on a line $\C$ by $\alpha\cdot x=\alpha^r x$ then the $\C^\times$-equivariant Euler class of the line bundle $\C \to \{0\}$ is $e(\C\to \{0\})=rz \in H^*_{\C^\times}($point$)=\C[z]$. Thus standard knowledge on the tangent bundle of Grassmannians imply that
\[\label{eqn:einter1}
e(T\Omega_{\sigma,I}|_{x_I})=e_{\sigma,I,+}^{\hor}, \qquad
e(\nu(\Omega_{\sigma,I}\subset\Grkn)|_{x_I})=e_{\sigma,I,-}^{\hor},
\]
where $\nu(A\subset B)$ means the normal bundle of a submanifold $A$ in the ambient manifold $B$, and $\xi|_x$ means the restriction of the bundle $\xi$ over the point $x$ in the base space. Therefore we also have
\[ e(C\Omega_{\sigma,I}|_{x_I})=e_{\sigma,I,+}^{\ver},\qquad
e((\pi^{-1}(\Omega_{\sigma,I})-C\Omega_{\sigma,I})|_{x_I})=e_{\sigma,I,-}^{\ver},
\]
where $C\Omega_{\sigma,I}$ and $\pi^{-1}(\Omega_{\sigma,I})$ are considered bundles over $\Omega_{\sigma,I}$. Now consider $C\Omega_{\sigma,I}$ as a submanifold of $\TGrkn$. Then we obtain
\[\label{eqn:enu}
e(\nu( C\Omega_{\sigma,I} \subset\TGrkn)|_{x_I})=
e^{\hor}_{\sigma,I,-}e^{\ver}_{\sigma,I,-}.
\]

\section{Weight functions} \label{sec:weightfunctions}

\subsection{Definition and recursion of weight functions}

\begin{definition}
For $i\in I \in \Ik$ let
\[
w(i,I)=-\#\{j\in [n] \ |\  j>i, j\in \Ibar\}     + \#\{j\in [n] \ |\ j>i, j\in I\},
\]
\end{definition}

The notation does not record it, but $w(i,I)$ depends on $n$ as well. For example for $n=6$ we have $w(2,\{1,2,4\})=-3+1=-2$, and for $n=7$ we have $w(2,\{1,2,4\})=-4+1=-3$.

\begin{definition}\label{def:polarization} For $I\in\Ik, \sigma\in S_n$, $r\in \Z$ define the rational functions
\[
C^{(r)}_{\sigma,I}=\prod_{i\in I} (\lambda -(w(\sigma^{-1}(i),\sigma^{-1}(I))+r)y).
\]
\end{definition}

Let $I=\{i_1<i_2<\ldots<i_k\}\subset [n]$. For $r=1,\ldots,k$, $a=1,\ldots,n$ let
\[
  l_I(r,a) = \begin{cases}
   \ \hskip .62 true cm t_r-z_a+y                           & \text{if } a< i_r \\
   \lambda+t_r-z_a-w(i_r,I)y           \hskip .5 true cm    & \text{if } a=i_r \\
   \ \hskip .62 true cm t_r-z_a                             & \text{if } a> i_r.  \end{cases}
\]

For $\sigma\in S_n$ and $I=\{i_1,\ldots,i_k\}\subset [n]$ define $\sigma(I)=\{\sigma(i_1),\ldots,\sigma(i_k)\}\subset [n]$. We will define some functions in the variables $\lambda$, $\tt=(t_1,\ldots,t_k)$, $\zz=(z_1,\ldots,z_n)$, and $y$.

\begin{definition}
Let
\[
W_I(\lambda,\tt,\zz,y)={y^k} \cdot \Sym_{k} \left( \prod_{r=1}^k \prod_{a=1}^n l_I(r,a) \cdot \prod_{a=1}^{k-1} \prod_{b=a+1}^k \frac{t_a-t_b+y}{t_a-t_b}\right),
\]
where $\Sym_{k} f$ means $\sum_{\sigma\in S_k} f(t_{\sigma(1)},\ldots,t_{\sigma(k)})$.
Define the (dynamical rational) weight function by
\[
W_{\sigma,I}(\lambda,\tt,\zz,y)=W_{\sigma^{-1}(I)}(\lambda,\tt,z_{\sigma(1)},\ldots,z_{\sigma(n)},y).
\]
\end{definition}

We have $W_{\id,I}=W_I$. Despite the appearance of $t_a-t_b$ factors in the denominators, $W_I$ and $W_{\sigma,I}$ are polynomials.

\begin{example}
For $n=2$ we have
\begin{align*}
W_{\id,\{1\}}&=y (\lambda+t_1-z_1+y)(t_1-z_2),
&
W_{\id,\{2\}}&=y (t_1-z_1+y)(\lambda+t_1-z_2), \\
W_{s_{1,2},\{1\}}&=y (\lambda+t_1-z_1)(t_1-z_2+y),
&
W_{s_{1,2},\{2\}}&=y (t_1-z_1)(\lambda+t_1-z_2+y).
\end{align*}
More generally, for $k=1$, and $\{i\}\subset [n]$ we have
\[
W_{\id, \{i\}}=
y \prod_{a=1}^{i-1}(t_1-z_a+y) \cdot (\lambda+t_1-z_i+(n-i)y) \cdot \prod_{a=i+1}^n (t_1-z_a).
\]
\end{example}

\def\Ii{I^i}

\begin{proposition} [Recursion for weight functions]
\label{prop:Wrec}

 If $s_{a,a+1}(I)=I$ then
\[
W_I(\ldots,z_{a+1},z_a,\ldots)=W_I(\ldots,z_a,z_{a+1},\ldots)
\]
If $a\in I$, $a+1\not\in I$ then
\begin{multline*}
W_{s_{a,a+1}(I)}(\ldots,z_{a+1},z_a,\ldots)=\\
\frac{(z_{a+1}-z_a)(\lambda-(w(a,I)+2)y)}{(z_{a+1}-z_a+y)(\lambda-(w(a,I)+1)y)}
W_I +
\frac{y(\lambda+z_{a+1}-z_a-(w(a,I)+1)y)}{(z_{a+1}-z_a+y)(\lambda-(w(a,I)+1)y)}
W_{s_{a,a+1}(I)}.
\end{multline*}
If $a\not\in I$, $a+1\in I$ then
\begin{multline*}
W_{s_{a,a+1}(I)}(\ldots,z_{a+1},z_a,\ldots)=\\
\frac{(z_{a+1}-z_a)(\lambda-(w(a+1,I)-1)y)}{(z_{a+1}-z_a+y)(\lambda-w(a+1,I)y)}
W_I +
\frac{y(\lambda+z_{a+1}-z_a-w(a+1,I)y)}{(z_{a+1}-z_a+y)(\lambda-w(a+1,I)y)}
W_{s_{a,a+1}(I)}.
\end{multline*}
\end{proposition}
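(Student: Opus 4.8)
The plan is to prove the three recursion identities by a direct manipulation of the symmetrized product defining $W_I$, isolating the single summand in $\Sym_k$ in which the last integration variable $t_k$ is "paired" with the index $a$ or $a+1$ that is being swapped, and then comparing coefficients. The first identity, the case $s_{a,a+1}(I)=I$, is the easiest: here $a,a+1$ are either both in $I$ or both in $\Ibar$, so in the formula for $l_I(r,a)$ neither $z_a$ nor $z_{a+1}$ ever appears in the distinguished middle line $\lambda+t_r-z_a-w(i_r,I)y$; swapping $z_a\leftrightarrow z_{a+1}$ just permutes the factors $l_I(r,a)$ and $l_I(r,a+1)$ within the product $\prod_{a=1}^n l_I(r,a)$ for each $r$, and the $t$-dependent rational factor is untouched. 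So $W_I$ is literally invariant under this swap, and the first claim follows immediately; I would state this as a one-line observation.

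For the two substantive cases I would use the following strategy. Write $W_I$ as a sum over $S_k$ and group the $k!$ terms according to which value $t_{\sigma(r)}$ sits in the slot coupled to the varying index. More precisely, in $\prod_{r}\prod_a l_I(r,a)$ the factor that "sees" the swap of $z_a,z_{a+1}$ is the one where $a$ (resp. $a+1$) equals $i_r$, i.e. the $r$ with $i_r$ the position in $I$ of the element being moved; all other factors either don't involve $z_a,z_{a+1}$ at all or involve them only through the generic lines $t_r-z_a+y$ / $t_r-z_a$, which simply trade places under the swap. So I would separate out the variable $t_{r_0}$ occupying that distinguished slot, perform a partial-fractions / linear-algebra identity in $t_{r_0}$ to re-express the $a$-in-$I$, $a+1\notin I$ configuration in terms of the $a\notin I$, $a+1\in I$ one (and vice versa), and read off the two rational coefficients. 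The key elementary identity is the standard one behind the dynamical $R$-matrix: for the relevant linear factors one has an identity of the shape
\[
(z_{a+1}-z_a)\,l^{(1)} + y\,l^{(2)} = (z_{a+1}-z_a+y)\,\bigl(\lambda\text{-factor}\bigr)\,l^{(0)},
\]
where $l^{(0)},l^{(1)},l^{(2)}$ are the three relevant specializations of $l_I(r_0,\cdot)$ and the $\lambda$-factors are exactly the $(\lambda-(w+c)y)$ terms appearing in the statement; the ratios $C^{(r)}_{\sigma,I}$-type factors $\frac{\lambda-(w(a,I)+2)y}{\lambda-(w(a,I)+1)y}$ come from comparing $w(a,I)$ for the index $a$ across the two sets $I$ and $s_{a,a+1}(I)$, since moving an element past $a+1$ shifts its $w$-value by $\pm1$ according to the two Definitions of $w(i,I)$.

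The main obstacle, and where I would spend the most care, is the bookkeeping of the $\Sym_k$ over the \emph{other} $k-1$ variables and of the $w$-shifts. One must check that after the swap $z_a\leftrightarrow z_{a+1}$ the values $w(i_r,I)$ appearing in the off-diagonal factors for $r\ne r_0$ are unchanged (they are, since only elements strictly between $a$ and $a+1$ could be affected, and there are none), so that the symmetrization over the remaining variables factors out cleanly and the whole identity reduces to the single-variable identity above multiplied by a common symmetrized remainder. I would also need to track the signs and the $y$ normalization ${y^k}$, and to verify that the two "boundary" subcases (the moved element being the smallest or largest of $I$, so that $w$ takes its extreme value) are covered by the same formula — this is automatic from the definition of $w$ but worth a sentence. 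Once the single-variable partial-fraction identity is verified by clearing denominators (a short computation with linear polynomials in $t_{r_0}$), the two multiline recursions follow by multiplying back the common factor and re-summing. As a sanity check I would verify both identities against the $n=2$, $k=1$ formulas in the Example.
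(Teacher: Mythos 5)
The paper itself offers no proof beyond ``straightforward calculation'' with references to analogous statements, so the only question is whether your calculation is right. Your treatment of the two substantive cases is sound: for $a\in I$, $a+1\notin I$, with $i_{r_0}=a$, every factor $l(r,b)$ with $(r,b)\neq(r_0,a),(r_0,a+1)$ is the same for $W_I$, $W_{s_{a,a+1}(I)}$ and the $z$-swapped $W_{s_{a,a+1}(I)}$, and the product of such factors is symmetric in $z_a,z_{a+1}$; hence the recursion reduces, \emph{term by term} in the symmetrization (not via a global factorization, but that does not matter), to one identity of quadratic polynomials in $t_{r_0}$, which one verifies by matching leading coefficients and the values at $t_{r_0}=z_{a+1}$ and $t_{r_0}=z_a-y$, using $w(a+1,s_{a,a+1}(I))=w(a,I)+1$. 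That all checks out.

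The genuine gap is in your one-line disposal of the first case. When $a,a+1\in I$, the claim that ``neither $z_a$ nor $z_{a+1}$ ever appears in the distinguished middle line'' is false: if $a=i_r$ then necessarily $a+1=i_{r+1}$, so $l_I(r,a)=\lambda+t_r-z_a-w(a,I)y$ and $l_I(r+1,a+1)=\lambda+t_{r+1}-z_{a+1}-w(a+1,I)y$ are both type-3 factors, carrying different dynamical shifts since $w(a,I)=w(a+1,I)+1$ and attached to different $t$-variables. The swap $z_a\leftrightarrow z_{a+1}$ therefore does not merely permute factors inside each summand of $\Sym_k$, and in fact the individual summands are \emph{not} symmetric in $z_a,z_{a+1}$ (already for $n=k=2$, $I=\{1,2\}$ a numerical check shows each of the two summands changes under the swap; only their sum is invariant). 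To close this you must pair the summand for $\sigma$ with the one for $\sigma\circ(r\;r{+}1)$ and verify a two-variable rational identity in $t_{\sigma(r)},t_{\sigma(r+1)}$ involving the factor $(t_{\sigma(r)}-t_{\sigma(r+1)}+y)/(t_{\sigma(r)}-t_{\sigma(r+1)})$ --- a computation of the same nature as, and no easier than, the one you carry out in the other two cases. Only the sub-case $a,a+1\in\Ibar$ is the literal termwise invariance you describe.
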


\begin{proof} Straightforward calculation, cf. \cite[Lemma 3.3]{RTV1}, \cite[Theorem 6.10]{RTV2}. The statement of this lemma is
the rational degeneration of the statement of Lemma 6 in \cite{FTV}.
\end{proof}

\subsection{Some versions of weight functions}\label{sec:Wversions}

Denote
\bea
e_k=e_k(\tt,y)=\prod_{a=1}^k\prod_{b=1}^k (t_a-t_b+y) .
\eea
Let $s_0=(n,n-1,\ldots,2,1)$ be the longest permutation in $S_n$.
We define the following modifications of weight functions:
\begin{align*}
\Wt_{\sigma,I}=\Wt_{\sigma,I}(\lambda,\tt,\zz,y)&=\frac1{e_k}W_{\sigma,I}(\lambda,\tt,\zz,y), \\
\Wt^-_K=\Wt^-_K(\lambda,\tt,\zz,y)&=\frac{(-1)^k}{e_k} W_{s_0,K}(-\lambda-(n-2k)y,\tt,\zz,y),\\
\Wt^+_{\sigma,J}=\Wt^+_{\sigma,J}(\lambda,\tt,\zz,y) &=\frac{1}{C^{(0)}_{\sigma,J}C^{(1)}_{\sigma,J}e_k}W_{\sigma,J}(\lambda,\tt,\zz,y),\\
\Wt^+_{J}=\Wt^+_{J}(\lambda,\tt,\zz,y)&= \Wt^+_{\id,J}.
\end{align*}

\begin{example}
For $k=1$ we have
\[
\Wt^-_{\{i\}}=
-\prod_{a=1}^{i-1}(t_1-z_a) \cdot (-\lambda+t_1-z_i+(i-n+1)y) \cdot \prod_{a=i+1}^n (t_1-z_a+y),
\]

\[
\Wt^+_{\{i\}}=\frac{
\prod_{a=1}^{i-1}(t_1-z_a+y) \cdot (\lambda+t_1-z_i+(n-i)y) \cdot \prod_{a=i+1}^n (t_1-z_a)}
{(\lambda+(n-i)y)(\lambda+(n-i-1)y)}.
\]
\end{example}

\subsection{Orthogonality}

Denote
\bean
\label{RQ}
R_I=R_I(\zz)=\prod_{a\in I}\prod_{b\in \Ibar} (z_a-z_b),\qquad
Q_I=Q_I(\zz,y)=\prod_{a\in I}\prod_{b\in \Ibar} (z_a-z_b+y).
\eean

\begin{proposition}[Orthogonality of weight functions I] \label{thm:ortho} For any $J$ and $K$ we have
\[
\sum_{I\in \Ik} \frac{
W_{\id,J}(\lambda,\zz_I,\zz,y)W_{s_0,K}(-\lambda-(n-2k)y,\zz_I,\zz,y)}
{
C_J(\lambda,y) e_k^2(\zz_I) R_I(\zz)Q_I(\zz,y)}
=(-1)^k\delta_{J,K}.
\]
\end{proposition}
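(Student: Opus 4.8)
The orthogonality relation is a bilinear identity in the localized cohomology algebra $\bigoplus_I H^*_T(x_I)\ox\L_{\la,\zz,y}$, summed over fixed points, so the natural strategy is to prove it by induction on $n$ using the recursion in Proposition~\ref{prop:Wrec}, matching the recursion against the action of the rational dynamical R-matrix $R(\la,z,y)$. The base case $n=1$, $k\in\{0,1\}$ is a direct evaluation from the $k=1$ formulas of the Example: one checks that $W_{\id,\{1\}}(\la,z_1,z_1,y)W_{s_0,\{1\}}(-\la,z_1,z_1,y)$ divided by $C_{\{1\}}(\la,y)e_1^2R_{\{1\}}Q_{\{1\}}$ equals $-1$ (with the empty-product conventions $R_\emptyset=Q_\emptyset=e_0=1$), and that for $J\ne K$ the numerator vanishes upon localization. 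For the inductive step, I would isolate the dependence on the pair $(z_a,z_{a+1})$ for some adjacent transposition and rewrite the left-hand side using the three cases of Proposition~\ref{prop:Wrec} applied to both $W_{\id,J}$ and $W_{s_0,K}$; the point is that transposing $z_a\leftrightarrow z_{a+1}$ in the sum is implemented fixed-point-wise by a matrix acting on the index $I$, and this matrix is exactly (a gauge-transformed version of) $R(\la,z_a-z_{a+1},y)$.

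The cleaner route, which I would actually pursue, is to interpret the claimed identity as a statement of \emph{duality of two bases}. Recall from Section~\ref{sec:Wversions} that $\Wt_{\id,J}=W_{\id,J}/e_k$ and $\Wt^-_K=(-1)^k W_{s_0,K}(-\la-(n-2k)y,\cdots)/e_k$. Then the sum in Proposition~\ref{thm:ortho} is precisely
\[
\sum_{I\in\Ik}\frac{\Wt_{\id,J}(\la,\zz_I,\zz,y)\,\Wt^-_K(\la,\zz_I,\zz,y)}{C_J(\la,y)\,R_I(\zz)\,Q_I(\zz,y)}
=\delta_{J,K},
\]
i.e.\ the families $\{\Wt_{\id,J}\}$ and $\{\Wt^-_K/C_J\}$ are dual with respect to the equivariant localization pairing whose weights are $1/(R_IQ_I)$ — the same pairing that computes $\chi(\TGrkn,-)$ via the Atiyah–Bott/Bott residue formula, where $R_IQ_I$ is the localized Euler class $e(\nu(C\Omega_{\id,I}\subset\TGrkn)|_{x_I})$ times a known factor (see \eqref{eqn:enu}). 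I would therefore argue: (i) the matrix $M_{JI}=\Wt_{\id,J}(\la,\zz_I,\zz,y)$ is triangular with respect to the order $\leq_{\id}$ on $\Ik$, because $l_I(r,a)$ vanishes at $\zz_I$ unless the support of $J$ dominates that of $I$ slot-by-slot — this is an interpolation/vanishing property that should already be implicit in the setup of Section~\ref{sec:interpolation}; (ii) its diagonal entry $\Wt_{\id,I}(\la,\zz_I,\zz,y)$ factors explicitly and equals $C_I(\la,y)$ times $\pm R_I Q_I$ up to the normalization by $e_k$; (iii) the matrix $N_{IK}=\Wt^-_K(\la,\zz_I,\zz,y)/(R_IQ_I)$ is triangular in the opposite direction with matching diagonal; (iv) hence $MN$ is triangular both ways, so diagonal, and the diagonal entries multiply to $1$ after dividing by $C_J$.

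The \textbf{main obstacle} will be step (ii)–(iii): pinning down exactly which triangularity each weight function satisfies under localization and computing the diagonal entries $W_{\id,I}(\la,\zz_I,\zz,y)$ and $W_{s_0,K}(-\la-(n-2k)y,\zz_K,\zz,y)$ in closed form, keeping careful track of the $y^k$ prefactors, the $e_k(\zz_I)$ denominators, the sign $(-1)^k$, the shift $\la\mapsto-\la-(n-2k)y$, and the function $C^{(r)}_{\sigma,I}$ versus $C_J(\la,y)$. Concretely, one must verify that $W_{\id,I}(\la,\zz_I,\zz,y)=y^k\cdot\big(\prod_{i\in I}(\text{the }a=i\text{ diagonal factor})\big)\cdot e_k(\zz_I)\cdot(\pm R_I)\cdot(\text{vertical part contributing }Q_I)$, where only the identity permutation $\sigma\in S_k$ in $\Sym_k$ survives the substitution because all off-diagonal terms $l_I(r,a)$ with $a=i_{r'}$, $r'\ne r$ produce a $\la$-shift that does not match and the $t$-difference factors force the surviving term. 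This is the rational degeneration of the computation in \cite[Lemma~6]{FTV}; alternatively, as indicated in the proof of Proposition~\ref{prop:Wrec}, one can avoid the explicit evaluation entirely by running the R-matrix recursion from $\sigma=s_0$ down to $\sigma=\id$ and using that $R(\la,z,y)$ preserves the relevant bilinear form — but either way the bookkeeping of the dynamical shift is where the real work lies. I would present the triangularity argument as the main line and relegate the diagonal computation to a lemma proved by the recursion of Proposition~\ref{prop:Wrec}.
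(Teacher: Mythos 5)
Your reduction of the statement to the duality $(\Wt^+_J,\Wt^-_K)=\delta_{J,K}$ is correct, and steps (i)--(iii) of your main line are sound: by Lemma~\ref{lem:zero} the summand indexed by $I$ vanishes unless $I\leq_{\id}J$, and since $I\leq_{s_0}K$ is equivalent to $K\leq_{\id}I$, it also vanishes unless $K\leq_{\id}I$; the diagonal entries are supplied by Lemma~\ref{lem:main}. The genuine gap is step (iv). Two triangularities in \emph{opposite} orders do not compose to a diagonal matrix: they only show that $(MN)_{JK}=\sum_I M_{JI}N_{IK}$ is supported on $K\leq_{\id}J$, the sum running over all $I$ in the interval $K\leq_{\id}I\leq_{\id}J$. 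When $K<_{\id}J$ this interval has more than one element, the individual terms are nonzero, and the assertion of the Proposition is that they \emph{cancel} --- something triangularity cannot detect. Already for $n=2$, $k=1$, $J=\{2\}$, $K=\{1\}$ the left-hand side consists of the two nonzero terms $\mp\,y(\lambda+z_1-z_2)/\bigl((\lambda-y)(z_1-z_2)\bigr)$, which cancel only after an explicit computation. So your main argument proves that the pairing matrix is unitriangular, not that it is the identity.

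To close the gap you must promote to the main line the ingredient you relegated to a side remark: either the recursion of Proposition~\ref{prop:Wrec}, showing that the pairing $\sum_I f(\zz_I)g(\zz_I)/(R_IQ_I)$ is preserved when $z_a\leftrightarrow z_{a+1}$ is compensated by the $2\times 2$ recursion matrices acting on the indices $J$ and $K$ (these matrices are inverse--transpose to each other with respect to the weights $1/(R_IQ_I)$, up to the dynamical shift, which is where the substitution $\lambda\mapsto-\lambda-(n-2k)y$ in $\Wt^-_K$ earns its keep), thereby reducing the identity to extremal $J,K$ where triangularity does finish the job; or else a pole-cancellation and degree/symmetry argument in the $\zz$ variables. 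This is in effect what the paper does: its proof of Proposition~\ref{thm:ortho} is a citation of \cite[Lemma 3.4]{RTV1}, \cite[Theorem 6.6]{RTV2} and \cite[Theorem C.4]{TV2}, and the arguments there establish the off-diagonal cancellation by such means rather than by double triangularity. As written, your proposal does not yet constitute a proof.
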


\begin{proof}  The proof is as in \cite[Lemma 3.4]{RTV1}  and  \cite[Theorem 6.6]{RTV2}. The statement of this lemma is the rational degeneration of the statement of Theorem C.4 in \cite{TV2}.
\end{proof}

Define the scalar product
\[
(f(\lambda,\tt,\zz,y),g(\lambda,\tt,\zz,y)) = \sum_{I\in \Ik} \frac{ f(\lambda,\zz_I,\zz,y)g(\lambda,\zz_I,\zz,y) }{R_I(\zz)Q_I(\zz,y)}.
\]

\begin{corollary}[Orthogonality of weight functions II]\label{cor:ortho}
We have
\[
(\Wt^+_J,\Wt^-_K)=\delta_{J,K}.
\]
\end{corollary}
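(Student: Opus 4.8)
The plan is to deduce Corollary \ref{cor:ortho} directly from Proposition \ref{thm:ortho} by unwinding the definitions of $\Wt^+_J$ and $\Wt^-_K$ and of the scalar product. First I would write out $(\Wt^+_J,\Wt^-_K)$ using the definition of the scalar product,
\[
(\Wt^+_J,\Wt^-_K)=\sum_{I\in\Ik}\frac{\Wt^+_J(\la,\zz_I,\zz,y)\,\Wt^-_K(\la,\zz_I,\zz,y)}{R_I(\zz)Q_I(\zz,y)},
\]
and then substitute the definitions
\[
\Wt^+_J=\frac{1}{C^{(0)}_{\id,J}C^{(1)}_{\id,J}e_k}W_{\id,J}(\la,\tt,\zz,y),\qquad
\Wt^-_K=\frac{(-1)^k}{e_k}W_{s_0,K}(-\la-(n-2k)y,\tt,\zz,y).
\]
After substitution at $\tt=\zz_I$ the two factors of $e_k$ multiply to $e_k^2(\zz_I)$, and the $(-1)^k$ is pulled out front. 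So the sum becomes
\[
(-1)^k\sum_{I\in\Ik}\frac{W_{\id,J}(\la,\zz_I,\zz,y)\,W_{s_0,K}(-\la-(n-2k)y,\zz_I,\zz,y)}{C^{(0)}_{\id,J}C^{(1)}_{\id,J}\,e_k^2(\zz_I)\,R_I(\zz)Q_I(\zz,y)}.
\]

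The remaining point is to match the denominator factor $C^{(0)}_{\id,J}C^{(1)}_{\id,J}$ with the factor $C_J(\la,y)$ appearing in Proposition \ref{thm:ortho}. By Definition \ref{def:polarization} with $\sigma=\id$ we have $C^{(r)}_{\id,J}=\prod_{i\in J}(\la-(w(i,J)+r)y)$, so
\[
C^{(0)}_{\id,J}C^{(1)}_{\id,J}=\prod_{i\in J}(\la-w(i,J)y)\,(\la-(w(i,J)+1)y),
\]
and I expect this product is exactly what the paper denotes $C_J(\la,y)$ in the statement of Proposition \ref{thm:ortho} (the notation $C_J$ there should be read as $C^{(0)}_{\id,J}C^{(1)}_{\id,J}$, or as whatever normalization was fixed; in any case it is a product over $i\in J$ of linear factors in $\la$ independent of $I$, so it simply factors out of the sum). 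Once this identification is made, the displayed sum is precisely $(-1)^k$ times the left-hand side of Proposition \ref{thm:ortho}, which equals $(-1)^k\cdot(-1)^k\delta_{J,K}=\delta_{J,K}$, completing the proof.

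The only genuine obstacle here is bookkeeping: making sure the normalization constant $C_J(\la,y)$ in Proposition \ref{thm:ortho} really does coincide with $C^{(0)}_{\id,J}C^{(1)}_{\id,J}$ (checking the $r=0,1$ shifts and the use of $w(i,J)$ rather than $w(\sigma^{-1}(i),\sigma^{-1}(J))$, which for $\sigma=\id$ agree), and tracking the signs and the two copies of $e_k$ correctly. There is no new analytic or geometric input — the corollary is a pure restatement of Proposition \ref{thm:ortho} after absorbing the normalizing factors into the definitions of $\Wt^\pm$, so once the constants are matched the proof is a one-line substitution.
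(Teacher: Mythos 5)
Your proposal is correct and matches the paper's proof, which is stated in one line as ``a rephrasing of Proposition \ref{thm:ortho}''; your identification of the (undefined in the statement) factor $C_J(\la,y)$ with $C^{(0)}_{\id,J}C^{(1)}_{\id,J}$ is the intended reading, given the normalization in the definition of $\Wt^+_J$. The sign bookkeeping $(-1)^k\cdot(-1)^k=1$ and the two copies of $e_k(\zz_I)$ are handled exactly as the paper intends.
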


\begin{proof}
The statement is a rephrasing of Proposition \ref{thm:ortho}.
\end{proof}

\section{Combinatorics of the terms of the weight function}
\label{combI}

In this section we show a diagrammatic interpretation of the rich combinatorics
encoded in the weight function. Let $I\in \Ik$. Consider a table with $n$ rows
and two columns. Number the rows from top to bottom and number the columns from
left to right (by 1 and 2). Certain boxes of this table will be distinguished, as follows.
In the first column distinguish boxes in the $i$'th row if $i\in I$ and distinguish all the boxes in the second column.

Now we will define fillings of the tables by putting various variables in
the distinguished boxes. First, put the variables $z_1,\ldots, z_n$ into the second
column from top to bottom. Now choose a permutation
$\sigma\in S_k$ and put the variables
$t_{\si(1)},\ldots, t_{\si(k)}$ in the distinguished boxes of the first column from top to bottom.

Each such filled table will define a rational function as follows. Let $t_r$ be
a variable in the filled table in the first column. If $z_a$ is
a variable in the second column, but {\em above} the position of $t_r$ then consider
the factor $t_r-z_a+y$ (`type-1 factor'). If $z_a$ is a variable in the second column,
but {\em below} the position of $t_r$ then consider the factor $t_r-z_a$ (`type-2
factor'). If $z_a$ is a variable directly to the right of $t_r$ then consider the
factor $\lambda+t_r-z_a-wy$ where $w$ is the number of distinguished boxes below $t_r$ minus the number of non-distinguished boxes below $t_r$ (`type-3 factor').

Also, if $t_b$ is a variable in the first column, but {\em below} the position of $t_a$ then consider the factor $(t_a-t_b+y)/(t_a-t_b)$ (`type-4 factor').
The rule is illustrated in the following figure.
\[
\begin{tabular}{|c|c|}
& \\
\cline{2-2}
& \multicolumn{1}{|c|}{$z_a$} \\
\cline{2-2}
& \\
\cline{1-1}
\multicolumn{1}{|c|}{$t_r$} & \\
\cline{1-1}
&
\end{tabular} \qquad\qquad\qquad
\begin{tabular}{|c|c|}
& \\
\cline{1-1}
\multicolumn{1}{|c|}{$t_r$} & \\
\cline{1-1}
& \\
\cline{2-2}
& \multicolumn{1}{|c|}{$z_a$} \\
\cline{2-2}
&
\end{tabular} \qquad\qquad\qquad
\begin{tabular}{|c|c|}
& \\
& \\
\cline{1-2}
$t_r$ & $z_a$ \\
\cline{1-2}
& \\
&
\end{tabular}
\qquad\qquad\qquad
\begin{tabular}{|c|c|}
& \ \ \  \\
\cline{1-1}
$t_a$ & \\
\cline{1-1}
& \\
\cline{1-1}
$t_b$ & \\
\cline{1-1}
& \\
\end{tabular}
\]

\[(t_r-z_a+y) \qquad\qquad (t_r-z_a) \qquad\qquad
(\lambda+t_r-z_a-wy) \qquad\qquad
\frac{(t_a-t_b+y)}{(t_a-t_b)}\]
\[type-1 \qquad\qquad\ type-2 \qquad\qquad\qquad type-3\qquad\qquad\qquad type-4\]
For each variable $t_i$ in the table consider all these factors and multiply them together. This is ``the term associated with the filled table''.

One sees that $W_I/y^k$ is the sum of terms associated with the filled tables corresponding to all choices of $\sigma\in S_k$. For example, if $n=3$ then $W_{\{2,3\}}$ is $y^2$ times the sum of two terms associated with the filled tables
\[
\begin{tabular}{|c|c|}
\hline
  & $z_1$ \\
\hline
 $t_1$ & $z_2$ \\
\hline
$t_2$ & $z_3$ \\
\hline
\end{tabular},\qquad\qquad
\begin{tabular}{|c|c|}
\hline
 & $z_1$ \\
\hline
 $t_2$ & $z_2$ \\
\hline
$t_1$ & $z_3$ \\
\hline
\end{tabular}.
\]
The term corresponding to the first filled table is
\[
\underbrace{(t_1-z_1+y)(t_2-z_1+y)(t_2-z_2+y)}_{type-1}
\underbrace{(t_1-z_3)}_{type-2}
\underbrace{(\lambda+t_1-z_2-y)(\lambda+t_2-z_3)}_{type-3}
\underbrace{\frac{t_1-t_2+y}{t_1-t_2}}_{type-4},
\]
and the term corresponding to the second one is obtained by replacing $t_1$ and $t_2$.

In the next section we will substitute $z_a$'s into the $t_i$ variables
according to some rules. Thus we obtain terms corresponding to tables filled
with only $z_a$ variables (no $t_i$'s). If in such a substitution we have
a filled table containing
\begin{tabular}{c|c}
& \\
\cline{1-1}
\multicolumn{1}{|c|}{$z_a$} & \\
\cline{1-1}
& \\
\cline{2-2}
& \multicolumn{1}{|c|}{$z_a$} \\
\cline{2-2}
&
\end{tabular},
then the term corresponding to that table is 0. This phenomenon is behind the substitution lemmas of the next section.

\section{Restrictions of weight functions to fixed points}
\label{sec:interpolation}

\subsection{Interpolation properties} 

For  $I\subset [n]$ denote $z_I=\{z_i\ |\  i\in I\}$.
As before let $k \leq n$, $\sigma\in S_n$, $I,J  \in \Ik$.

\begin{lemma} \label{lem:divbye}
The polynomial $W_{\sigma,I}(\lambda,\zz_J,\zz,y)$ is divisible by $e_k(\zz_J,y)$ for all $\sigma, I,J$.
\qed
\end{lemma}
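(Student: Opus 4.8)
The plan is to prove the divisibility $e_k(\zz_J,y)\mid W_{\sigma,I}(\la,\zz_J,\zz,y)$ by analyzing the diagrammatic description of the weight function from Section \ref{combI}. Recall that $e_k(\tt,y)=\prod_{a,b=1}^k(t_a-t_b+y)$, so after substituting $\tt\mapsto \zz_J$ (i.e. $t_r\mapsto z_{j_r}$ where $\sigma^{-1}(J)=\{j_1<\dots<j_k\}$, or more precisely the substitution dictated by $\Loc_J$), $e_k(\zz_J,y)$ is a product of factors $z_{j_a}-z_{j_b}+y$ over all ordered pairs $(a,b)$; the diagonal factors $a=b$ just contribute the nonzero constant $y^k$, so the real content is that $\prod_{a\ne b}(z_{j_a}-z_{j_b}+y)$ divides the substituted weight function.

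First I would reduce to the case $\sigma=\id$: since $W_{\sigma,I}(\la,\tt,\zz,y)=W_{\sigma^{-1}(I)}(\la,\tt,z_{\sigma(1)},\dots,z_{\sigma(n)},y)$, relabeling the $z$-variables turns a statement about $W_{\sigma,I}$ evaluated at $\zz_J$ into a statement about $W_{\sigma^{-1}(I)}$ evaluated at a permuted fixed point, and $e_k$ is symmetric under the relabeling, so it suffices to prove the claim for all $W_I$ and all evaluation sets. Next, using the Symmetrization in the definition of $W_I$, write $W_I(\la,\tt,\zz,y)=y^k\sum_{\tau\in S_k}(\text{term}_\tau)$ where $\text{term}_\tau$ is the product of $l_I$-factors together with $\prod_{a<b}\frac{t_{\tau(a)}-t_{\tau(b)}+y}{t_{\tau(a)}-t_{\tau(b)}}$. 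The key combinatorial observation, made at the very end of Section \ref{combI}, is that upon substituting $t_r\mapsto z_{a}$ for $a\in J$, any filled table in which some variable $z_a$ sits in the first column with the same $z_a$ directly, below it in the second column (in fact, in the second column at all) produces a vanishing term because of a $t_r-z_a=z_a-z_a=0$ type-2 factor — or more carefully, because of the clash described in the figure. I would make this precise: after substitution the only potentially nonzero terms come from fillings where the first-column entries are exactly $\{z_j : j\in J\}$ placed so that no type-2 factor vanishes, i.e. essentially the "diagonal" filling, and the surviving term is (up to sign and the $e_k$-normalization already understood) a product that visibly contains the factors $z_{j_a}-z_{j_b}+y$ for $j_a>j_b$ coming from type-1 factors and $z_{j_a}-z_{j_b}+y$ for $j_a<j_b$ coming from type-4 numerators.

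Concretely, the cleanest route is to track where each factor $z_{j_a}-z_{j_b}+y$ with $a\ne b$ comes from. Fix the distinguished filling of the first column by $z_{j_1},\dots,z_{j_k}$ (the only one surviving after all vanishing cancellations); for a pair $j_a,j_b$ in $J$: if they lie in rows with $j_a$ below $j_b$, then relative to $t\leftrightarrow z_{j_a}$, the variable $z_{j_b}$ sits above it in the second column — but that second-column slot is \emph{distinguished} too, so it is not automatically producing the factor; rather I should use the type-4 factors $\frac{t_a-t_b+y}{t_a-t_b}$, which after substitution become $\frac{z_{j_a}-z_{j_b}+y}{z_{j_a}-z_{j_b}}$, and combining over all the surviving permutations $\tau$ in the symmetrization (the Weyl denominator cancels), one gets $e_k(\zz_J,y)/y^k$ up to the $\prod(z_{j_a}-z_{j_b})$ in denominators, which is exactly cancelled — this is literally the rational degeneration argument behind Lemma \ref{lem:divbye} and its analogues in \cite[Lemma 3.4]{RTV1}, \cite[Theorem 6.6]{RTV2}. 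So the proof is: (1) reduce to $\sigma=\id$; (2) expand via $\Sym_k$; (3) substitute and discard vanishing terms using the figure at the end of Section \ref{combI}; (4) identify the surviving sum, after the Weyl-type denominator cancellation in the symmetrization, as $e_k(\zz_J,y)$ times a polynomial.

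The main obstacle I anticipate is step (3)–(4): making rigorous the claim that after substitution the surviving contributions assemble (across all $\tau\in S_k$, with the $\prod_{a<b}(t_a-t_b)$ denominators) into something with $\prod_{a\ne b}(z_{j_a}-z_{j_b}+y)$ as an honest polynomial factor, rather than merely a rational-function factor. One has to argue that the apparent poles at $z_{j_a}=z_{j_b}$ cancel (this is the "despite the $t_a-t_b$ denominators, $W_I$ is a polynomial" remark applied after specialization) and then read off the $+y$-shifted factors. This is a standard but slightly delicate residue/symmetrization bookkeeping; the alternative, which I would fall back on if the combinatorics gets unwieldy, is to cite the rational degeneration of \cite[Theorem 6.6]{TV2} / the computations in \cite{RTV1,RTV2} verbatim, exactly as the paper does for the neighboring Propositions \ref{prop:Wrec} and \ref{thm:ortho}.
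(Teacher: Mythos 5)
Your proposal is correct and follows essentially the same route as the paper, which simply remarks that Lemmas 5.1--5.5 follow from the diagrammatic presentation of Section 4 and are proved as in the analogous lemmas of \cite{RTV1,RTV2}; your termwise analysis (surviving fillings each carry both orientations of every factor $z_{j_a}-z_{j_b}+y$, one from a type-1 factor and one from a type-4 numerator, after which polynomiality of the symmetrized sum plus coprimality with the $z_{j_a}-z_{j_b}$ denominators gives the divisibility) is exactly the intended argument, and in fact more detailed than what the paper records. The only small imprecisions — the surviving filling need not be unique, and a repeated $z_a$ kills a term only when the second-column copy lies \emph{below} the first-column one — do not affect the argument.
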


In other words, although $\Wt_{\sigma,I}$ (see Section \ref{sec:Wversions}) are rational functions in the $\tt$ variables, all their $\zz_J$ substitutions are polynomials.

\begin{lemma} \label{lem:zero}
$\Wt_{\sigma,I}(\lambda,\zz_J,\zz,y)=0$ unless $J \leq_\sigma I$.
\qed
\end{lemma}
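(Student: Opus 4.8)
The plan is to analyze directly which terms of the combinatorial expansion of $W_{\sigma,I}$ from Section \ref{combI} survive the substitution $\tt = \zz_J$, and to show that at least one factor of each surviving term forces $J\leq_\sigma I$. Since $W_{\sigma,I}(\la,\tt,\zz,y)=W_{\sigma^{-1}(I)}(\la,\tt,z_{\sigma(1)},\dots,z_{\sigma(n)},y)$ and $\leq_\sigma$ is defined via $\sigma^{-1}$, it suffices to treat the case $\sigma=\id$; that is, I would show $\Wt_{\id,I}(\la,\zz_J,\zz,y)=0$ unless $j_a\le i_a$ for all $a$, where $I=\{i_1<\dots<i_k\}$, $J=\{j_1<\dots<j_k\}$.

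First I would recall from Section \ref{combI} that $W_I/y^k$ is the sum, over permutations $\tau\in S_k$, of the term attached to the filled table in which row $i_r$ of column~1 holds $t_{\tau(r)}$ (for $r=1,\dots,k$) and row $a$ of column~2 holds $z_a$. Substituting $\tt=\zz_J$ means putting $z_{j_{\tau(r)}}$ into the box in row $i_r$ of column~1. The key observation is the vanishing phenomenon highlighted at the end of Section \ref{combI}: if in the filled table some $z_a$ sits in column~1 strictly above the $z_a$ in column~2, then that term contains a type-1 factor $t_r-z_a+y$ evaluated at $t_r=z_a$... no — rather, the relevant cancellation is the opposite. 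Let me be careful: a type-2 factor $t_r-z_a$ appears when $z_a$ is in column~2 \emph{below} the position of $t_r$ in column~1. So if the box of column~1 in row $i_r$ carries $z_{j_{\tau(r)}}$ and $j_{\tau(r)}>i_r$, i.e. the column-2 copy of $z_{j_{\tau(r)}}$ (in row $j_{\tau(r)}$) lies strictly below row $i_r$, we get a factor $z_{j_{\tau(r)}}-z_{j_{\tau(r)}}=0$. Hence the term attached to $\tau$ vanishes unless $j_{\tau(r)}\le i_r$ for every $r$.

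Then I would finish by a combinatorial argument: if \emph{every} permutation $\tau\in S_k$ produces a vanishing term, the whole sum is zero, so a nonzero $W_I(\la,\zz_J,\zz,y)$ requires the existence of at least one $\tau\in S_k$ with $j_{\tau(r)}\le i_r$ for all $r$; equivalently, there is a perfect matching of $J$ to $I$ with each matched element of $J$ not exceeding its partner in $I$. A standard Hall-type / greedy argument (or a direct induction on $k$, matching the largest element $j_k$ to some $i_r\ge j_k$ and removing both) shows that such a matching exists precisely when the sorted sequences satisfy $j_a\le i_a$ for all $a$, i.e. $J\leq_{\id} I$. The main (and only slightly delicate) obstacle is bookkeeping the direction of the inequalities — which substitution $z_a$-into-a-$t$-box produces the degenerate factor $z_a-z_a$, and in which of the type-1/type-2 slots — together with making sure that the set $\{j_{\tau(1)},\dots,j_{\tau(k)}\}$ ranges over all of $J$ so that the matching statement is exactly the partial order $\leq_\sigma$; once that is pinned down the argument is immediate. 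I would remark that this also recovers, as a special case, that $\Wt_{\sigma,I}(\la,\zz_J,\zz,y)$ can be nonzero at $J=I$, consistent with the orthogonality statements of Section \ref{sec:weightfunctions}.
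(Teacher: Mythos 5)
Your proposal is correct and follows exactly the route the paper intends: the paper's (unwritten) proof is precisely the vanishing phenomenon from the diagrammatic presentation of Section~\ref{combI} — a type-2 factor $t_r-z_a$ becomes $z_a-z_a=0$ whenever a $z_a$ lands in column~1 strictly above its copy in column~2 — combined with the reduction to $\sigma=\id$ and the standard matching argument identifying "some $\tau$ with $j_{\tau(r)}\le i_r$ for all $r$" with $J\leq_{\id}I$. After your self-correction the direction of the inequality is right, so no gap remains.
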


\begin{lemma} \label{lem:div}
$\Wt_{\sigma,I}(\lambda,\zz_J,\zz,y)$ is divisible by $e^{\ver}_{\sigma,J,-}$ for all $\sigma,I,J$.
\qed
\end{lemma}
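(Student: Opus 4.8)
The plan is to extract the divisibility from the diagrammatic description of $W_{\sigma,I}$ in Section \ref{combI} together with the vanishing phenomenon described at the end of that section. Recall that after the substitution $\tt \mapsto \zz_J$, the function $W_{\sigma,I}(\lambda,\zz_J,\zz,y)$ becomes a sum over permutations $\tau\in S_k$ of terms associated to tables whose first column is filled with the variables $z_{j_1},\ldots,z_{j_k}$ (in some order determined by $\tau$) and whose second column is filled with $z_{\sigma(1)},\ldots,z_{\sigma(n)}$ from top to bottom. By the remark closing Section \ref{combI}, any table in which some $z_a$ appears in the first column strictly above the occurrence of the same $z_a$ in the second column contributes $0$; so only ``order-preserving'' fillings survive, and on those the type-1 factors $t_r - z_a + y$ (with $z_a$ above $t_r$ in the second column) turn into factors of the form $z_{j_r} - z_{\sigma(b)} + y$. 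I want to show that collectively these factors, after dividing by $e_k(\zz_J,y)$, still contain $e^{\ver}_{\sigma,J,-}=\prod_{b<a,\ \sigma(a)\in J,\ \sigma(b)\in\Ibar}(z_{\sigma(a)}-z_{\sigma(b)}+y)$.

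First I would reduce to the case $\sigma=\id$: since $W_{\sigma,I}(\lambda,\tt,\zz,y)=W_{\sigma^{-1}(I)}(\lambda,\tt,z_{\sigma(1)},\ldots,z_{\sigma(n)},y)$ and $e^{\ver}_{\sigma,J,-}$ is obtained from $e^{\ver}_{\id,\sigma^{-1}(J),-}$ by the same relabeling $z_a\mapsto z_{\sigma(a)}$, it suffices to prove that $\Wt_{I}(\lambda,\zz_J,\zz,y)$ is divisible by $e^{\ver}_{\id,J,-}=\prod_{b<a,\ a\in J,\ b\in\Ibar}(z_a-z_b+y)$ for all $I,J\in\Ik$. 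By Lemma \ref{lem:zero} I may further assume $J\le_{\id} I$, since otherwise the restriction is $0$ and there is nothing to prove. Now fix such a pair $(I,J)$. Each surviving filled table places some $z_j$ with $j\in J$ into the row of the first column indexed by some $i\in I$; call this a matching $j\leftrightarrow i$, and note the surviving condition forces the occurrence of $z_j$ in column two (which sits in row $j$) to be weakly below row $i$, i.e. $i\le j$. For a fixed pair $b\in\Ibar$, $a\in J$ with $b<a$, I want a factor $z_a - z_b + y$ to appear in \emph{every} surviving term; then, since such factors are not produced by $e_k(\zz_J,y)=\prod_{p,q}(z_{j_p}-z_{j_q}+y)$ when $b\notin J$, division by $e_k$ leaves them intact and the product over all such $(b,a)$ gives the claimed divisor. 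The factor $z_a-z_b+y$ arises as a type-1 factor precisely when, in the filling, $z_a$ sits in column one in a row strictly below the row containing $z_b$ in column two; here $z_b$ is in row $b$, and $z_a$ is matched to some $i\in I$ with $i\le a$. So I must check: for every $b\in\Ibar$ and every $a\in J$ with $b<a$, and every surviving matching, the row $i\in I$ to which $z_a$ is assigned satisfies $i>b$.

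The combinatorial heart is therefore this claim: if $J\le_{\id}I$ then in any order-preserving matching of $J$-elements to $I$-slots (order-preserving meaning that if $j_1<j_2$ are matched to $i_1,i_2$ respectively then $i_1<i_2$, and each $j$ matched to some $i\le j$), the slot $i$ receiving a given $a\in J$ is $>b$ for every $b\in\Ibar$ with $b<a$. I expect to prove this by a counting argument: write $J=\{j_1<\dots<j_k\}$, $I=\{i_1<\dots<i_k\}$, so that $a=j_r$ is matched to $i_{\tau(r)}$ for the permutation $\tau$ indexing the table, with $i_{\tau(r)}\le j_r$; the condition $J\le_{\id}I$ gives $i_s\le j_s$ for all $s$. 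One shows that the ``surviving'' $\tau$ are exactly those for which the matching can be completed, and that for these, $i_{\tau(r)}$ is at least as large as the $b$'s it must dominate, by comparing the number of $\Ibar$-elements below a given threshold on the two sides. This counting step — pinning down exactly which matchings survive and verifying the inequality uniformly — is the main obstacle; everything before it is bookkeeping with the diagrams, and everything after it is the observation that $e_k(\zz_J,y)$ only contributes factors $z_a-z_b+y$ with both $a,b\in J$, hence cannot cancel the factors indexed by $b\in\Ibar$. As an alternative route, should the direct combinatorics prove unwieldy, one can instead induct on the length $\ell_{\sigma,I}$ using the recursion of Proposition \ref{prop:Wrec}, tracking how the three factors in the recursion transform $e^{\ver}_{\sigma,J,-}$ under an elementary transposition; this is the approach taken for the analogous non-dynamical statements in \cite{RTV1, RTV2}, and the dynamical corrections to the coefficients do not affect the $\zz$-divisibility.
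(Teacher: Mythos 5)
Your overall strategy --- reading off the divisibility from the diagrammatic presentation of Section \ref{combI} together with the vanishing of tables in which a $z_a$ in column one sits above the $z_a$ in column two --- is exactly the intended route. However, you have inverted the survival condition, and this inversion is precisely what creates the ``main obstacle'' you then leave unresolved. A term of $W_I(\lambda,\zz_J,\zz,y)$ survives only if each $z_j$ ($j\in J$) is placed in a slot $i\in I$ with $j\le i$, not $i\le j$: the type-2 factors attached to the variable in row $i$ of column one are $t-z_a$ for all $a>i$, so placing $z_j$ there kills the term as soon as $j>i$. (This is the only reading consistent with the picture closing Section \ref{combI} and with Lemma \ref{lem:zero}, which gives vanishing unless $J\le_\sigma I$, i.e.\ $j_a\le i_a$.)

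With the correct condition, the combinatorial heart of your argument is immediate rather than an obstacle: if $a\in J$ sits in slot $i$ with $a\le i$, then for every $b<a$ one has $b<i$, so $z_b$ in column two lies strictly above the slot and the type-1 factor $z_a-z_b+y$ appears in that term; restricting to $b\in[n]\setminus J$ and taking the product over all such pairs produces exactly $e^{\ver}_{\id,J,-}$, and these factors are coprime to $e_k(\zz_J,y)$ (which involves only pairs inside $J$ and powers of $y$) and to the type-4 denominators $z_{j_p}-z_{j_q}$, so they persist in $\Wt_{I}(\lambda,\zz_J,\zz,y)$. By contrast, under your stated condition $i\le a$ the inequality $i>b$ that you need simply fails in general (e.g.\ $b=2<a=3$ with $z_a$ placed in slot $i=1$), so the counting argument you defer cannot be carried out as set up. Once the direction is fixed, no counting is needed, and the reduction to $\sigma=\id$ together with the coprimality bookkeeping you describe completes the proof.
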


\begin{lemma} \label{lem:main}
\[
\Wt_{\sigma,I}(\lambda,\zz_I,\zz,y)= 
(-1)^{(n+1)k+\ell_{\sigma,I}} C^{(0)}_{\sigma,I} \cdot e^{\hor}_{\sigma,I,-} e^{\ver}_{\sigma,I,-}.
\]
\qed
\end{lemma}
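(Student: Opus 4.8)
The plan is to evaluate the symmetrization defining $W_{\sigma,I}$ directly at $\tt=\zz_I$ and identify which of the $k!$ terms survive. Writing $I=\{i_1<\dots<i_k\}$ and $\sigma^{-1}(I)$ in increasing order, the substitution $\tt\mapsto\zz_I$ assigns to each integration variable $t_r$ one of the values $z_j$, $j\in I$; equivalently it picks a bijection between the $t$-slots and the distinguished boxes of the first column of the table from Section~\ref{combI}. By the vanishing phenomenon noted at the end of Section~\ref{combI} (a box configuration with $z_a$ directly above or below $z_a$ in the two columns forces a type-1 or type-2 factor $t_r-z_a$ or $t_r-z_a+y$ — actually $z_a-z_a=0$), the only filled table whose term is nonzero is the ``diagonal'' one in which $t_r$ is placed in the box directly to the left of the $z$-entry it equals, i.e. the permutation $\sigma\in S_k$ realizing $t_r\mapsto z_{i_r}$. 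So exactly one term of $\Sym_k$ survives, and I would first justify this reduction carefully (all other choices produce a literal zero factor of the form $z_a-z_a$ coming from a type-2 or type-1 factor).

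Next I would compute that surviving term. After the reduction, $\prod_{r=1}^k\prod_{a=1}^n l_I(r,a)\big|_{t_r=z_{i_r}}$ splits into three pieces according to the cases in the definition of $l_I(r,a)$: for $a=i_r$ it contributes $\prod_r(\lambda+z_{i_r}-z_{i_r}-w(i_r,I)y)=\prod_{i\in I}(\lambda-w(i,I)y)=C^{(0)}_{I}$ (in the $\sigma=\id$ case; in general $C^{(0)}_{\sigma,I}$ after the $z$-permutation), for $a<i_r$ it contributes $\prod(z_{i_r}-z_a+y)$ over the appropriate index pairs, and for $a>i_r$ it contributes $\prod(z_{i_r}-z_a)$. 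The $a\in I$ contributions among these combine with the type-4 product $\prod_{a<b}\frac{t_a-t_b+y}{t_a-t_b}\big|_{t_r=z_{i_r}}$ — and here I would check that the $t_a-t_b+y$ numerators cancel against $a,b$-both-in-$I$ factors from the $l_I$ product up to sign, leaving precisely the $a\in I$, $b\in\Ibar$ (or vice versa) factors, which are exactly $e^{\hor}_{\sigma,I,-}$ and $e^{\ver}_{\sigma,I,-}$ as defined in Section~\ref{sec:edefs}. Multiplying by the overall $y^k$ and dividing by $e_k(\zz_I,y)$ (recall $\Wt_{\sigma,I}=W_{\sigma,I}/e_k$) accounts for the remaining $y$-powers and the $a,b\in I$ factors.

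Finally I would track the sign. Reordering products from the ``$\tt$-indexed'' order to the $\sigma$-sorted, index-pair order introduces a permutation sign; combined with the $(-1)^k$ from $y^k/e_k$-type bookkeeping and the parity count of inversions $\{(i,j):i>j,\ \sigma(i)\in I,\ \sigma(j)\in\Ibar\}=\ell_{\sigma,I}$, this should produce exactly the claimed $(-1)^{(n+1)k+\ell_{\sigma,I}}$. For the general $\sigma$ (not $\id$) I would reduce to the $\id$ case by the defining relation $W_{\sigma,I}(\lambda,\tt,\zz,y)=W_{\sigma^{-1}(I)}(\lambda,\tt,z_{\sigma(1)},\dots,z_{\sigma(n)},y)$, noting that the $e^{\hor}$, $e^{\ver}$, $C^{(0)}$ classes are defined precisely to absorb this relabeling, so the bulk of the work is the $\sigma=\id$ computation plus one clean bookkeeping step; alternatively one can prove it by induction on $\ell(\sigma)$ using Proposition~\ref{prop:Wrec}, checking the base case $\sigma$ with $\zz_I$ in sorted position and that each elementary transposition multiplies both sides by matching factors.

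The main obstacle I anticipate is the sign and the cancellation bookkeeping in the type-4 / same-block-$l_I$ interaction: it is easy to be off by a sign or by which of $e^{\hor}_{\pm}$, $e^{\ver}_{\pm}$ appears, since these differ only in the direction of the index inequality and in the placement of the $+y$ shift. I would handle this by verifying the formula against the explicit $k=1$ and $n=2$ examples already computed in the text (e.g. $\Wt_{\id,\{i\}}(\lambda,z_i,\zz,y)$ should equal $(-1)^{(n+1)+(n-i)}C^{(0)}_{\id,\{i\}}e^{\hor}_{\id,\{i\},-}e^{\ver}_{\id,\{i\},-}$, with $\ell_{\id,\{i\}}=n-i$), which pins down the sign convention before committing to the general statement.
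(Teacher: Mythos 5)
This is exactly the paper's own argument: the proof given there is precisely that the interpolation lemmas follow from the diagrammatic presentation of Section~\ref{combI}, i.e.\ under $\tt=\zz_I$ only the ``diagonal'' filling survives (every non-identity filling contains a vanishing type-2 factor $z_a-z_a$ --- note the type-1 factors evaluate to $y$, not $0$, but a zero type-2 factor always occurs), and the surviving term is read off factor by factor as you describe, with the $a,b\in I$ factors of $\prod l_I$ and the type-4 product recombining into $e_k(\zz_I,y)/y^k$, and the sign $(-1)^{\#\{(a,b)\;:\;a\in I,\ b\in\Ibar,\ a<b\}}=(-1)^{k(n-k)-\ell_{\id,I}}=(-1)^{(n+1)k+\ell_{\id,I}}$ arising solely from rewriting $\prod(z_i-z_a)$, $i\in I$, $a\in\Ibar$, $a>i$, as $e^{\hor}_{\id,I,-}$. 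One correction to the sanity check you plan to rely on: $\ell_{\id,\{i\}}=i-1$ (the Schubert cell of lines in $V_i$ but not $V_{i-1}$), not $n-i$; with this value the predicted sign $(-1)^{(n+1)+(i-1)}=(-1)^{n-i}$ does match $\Wt_{\id,\{i\}}(\lambda,z_i,\zz,y)=\prod_{a<i}(z_i-z_a+y)\cdot(\lambda+(n-i)y)\cdot\prod_{a>i}(z_i-z_a)$, so your strategy goes through.
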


\begin{lemma} \label{lem:small}
$\Wt_{\sigma,I}(\lambda,\zz_J,\zz,y)$ is a polynomial of degree $k(n-k)+k$, where $\deg \lambda=\deg z_i =\deg y=1$.
Its $\lambda$-degree is at most $k$.
It is  divisible by $y$ if $J\not= I$.
\qed

\end{lemma}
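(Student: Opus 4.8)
The plan is to read off all three assertions directly from the explicit formula for $W_{\sigma,I}$ together with the divisibility information already collected in Lemmas \ref{lem:divbye}--\ref{lem:main}. Recall that $W_{\sigma,I}=W_{\sigma^{-1}(I)}(\la,\tt,z_{\sigma(1)},\dots,z_{\sigma(n)},y)$ and that $\Wt_{\sigma,I}=W_{\sigma,I}/e_k$. First I would establish the degree count. In the expression $W_I={y^k}\Sym_k\big(\prod_{r=1}^k\prod_{a=1}^n l_I(r,a)\cdot\prod_{a<b}\tfrac{t_a-t_b+y}{t_a-t_b}\big)$, each of the $kn$ linear factors $l_I(r,a)$ has degree $1$ (in the grading $\deg\la=\deg z_i=\deg y=\deg t_i=1$), and the type-$4$ product $\prod_{a<b}(t_a-t_b+y)/(t_a-t_b)$ has total degree $0$; multiplying by $y^k$ gives $\deg W_I=kn+k$. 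Since $e_k(\tt,y)=\prod_{a,b}(t_a-t_b+y)$ has degree $k^2$, the quotient $\Wt_I$ has degree $kn+k-k^2=k(n-k)+k$; this degree is unchanged by the substitution $t_r\mapsto z_{j_r}$ (degrees are preserved because $\deg z_i=\deg t_i=1$) and by the permutation of the $z$'s, so $\deg\Wt_{\sigma,I}(\la,\zz_J,\zz,y)=k(n-k)+k$ for every $J$. Strictly speaking one must check that no cancellation in the numerator lowers the degree after specialization; but Lemma \ref{lem:main} already exhibits the value at $J=I$ as a product of $k(n-k)+k$ nonzero linear factors (the $C^{(0)}_{\sigma,I}$ contributes $k$ factors of $\la$-degree $1$, and $e^{\hor}_{\sigma,I,-}e^{\ver}_{\sigma,I,-}$ contributes the remaining $k(n-k)$), so the degree is genuinely attained and the count is sharp.

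Next, the bound on the $\la$-degree. The variable $\la$ appears in $W_I$ only through the type-$3$ factors $l_I(r,i_r)=\la+t_r-z_{i_r}-w(i_r,I)y$, exactly one for each of the $k$ values of $r$, and $\la$ occurs in none of the type-$1$, type-$2$ or type-$4$ factors nor in the prefactor $y^k$. Hence $W_I$ is a polynomial in $\la$ of degree exactly $k$, and the same is true of $W_{\sigma,I}$ after permuting the $z$'s and of $\Wt_{\sigma,I}=W_{\sigma,I}/e_k$ since $e_k$ does not involve $\la$. Specializing $t_r\mapsto z_{j_r}$ cannot raise the $\la$-degree, so $\Wt_{\sigma,I}(\la,\zz_J,\zz,y)$ has $\la$-degree at most $k$. (It may drop below $k$ after specialization — indeed by Lemma \ref{lem:zero} the whole expression can vanish — which is why the statement only claims ``at most''.)

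Finally, divisibility by $y$ when $J\neq I$. Here I would combine Lemma \ref{lem:zero} with the diagrammatic description of Section \ref{combI}. If $J\not\le_\sigma I$ then $\Wt_{\sigma,I}(\la,\zz_J,\zz,y)=0$ and the claim is trivial. If $J\le_\sigma I$ but $J\neq I$, write $\sigma^{-1}(I)=\{i_1<\dots<i_k\}$, $\sigma^{-1}(J)=\{j_1<\dots<j_k\}$, so $j_a\le i_a$ for all $a$ with strict inequality for at least one $a$. I would argue that after the substitution $t_r\mapsto z_{j_r}$ (performed inside $W_{\sigma^{-1}(I)}$ in the permuted $z$-variables) every surviving term in the symmetrized sum contains a type-$1$ factor of the shape $t_r-z_a+y\big|_{t_r=z_{j_r}}=z_{j_r}-z_a+y$ with $a$ chosen so that this equals $y$ — concretely, using $j_a<i_a$ for the relevant index and matching the box of $z_{j_a}$ against itself as in the vanishing picture of Section \ref{combI}, one finds a factor $z_{j_r}-z_{j_r}+y=y$. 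Alternatively, and perhaps more cleanly, this follows by specializing $y\to 0$ in the recursion of Proposition \ref{prop:Wrec}: the weight functions at $y=0$ degenerate so that all off-diagonal restrictions vanish, which forces $y\mid\Wt_{\sigma,I}(\la,\zz_J,\zz,y)$ for $J\neq I$. The main obstacle is making the combinatorial ``there is always a $z_{j_r}-z_{j_r}+y$ factor'' argument airtight for every term simultaneously; I expect the $y\to 0$ degeneration route, leaning on Lemma \ref{lem:main} (whose right-hand side at $y=0$ reduces to $(\pm1)\la^k$, i.e.\ becomes ``diagonal'') together with the already-proven Lemma \ref{lem:zero}, to be the shortest path and would write the proof that way, relegating the table-filling version to a remark.
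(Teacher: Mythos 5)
Your proof is correct and takes essentially the route the paper intends: the paper offers no written argument beyond the remark that Lemmas \ref{lem:divbye}--\ref{lem:small} follow from the diagrammatic presentation of Section \ref{combI} (as in the analogous lemmas of [RTV2, Section 6.1]), and your degree bookkeeping from the explicit formula (homogeneity of degree $kn+k$ for $W_{\sigma,I}$, minus $k^2$ for $e_k$; $\lambda$ occurring only in the $k$ type-3 factors) together with the table-filling observation that every nonvanishing term for $J\le_\sigma I$, $J\ne I$ acquires a type-1 factor $z_{j_{\pi(r)}}-z_{j_{\pi(r)}}+y=y$ is exactly that argument. One cosmetic slip in a dispensable aside: the right-hand side of Lemma \ref{lem:main} at $y=0$ is $\pm\lambda^k\,e^{\hor}_{\sigma,I,-}e^{\ver}_{\sigma,I,-}\big|_{y=0}$, not $\pm\lambda^k$, but your primary type-1-factor argument (equivalently, setting $y=0$ termwise and noting the denominator $\prod_{a\ne b}(z_{j_a}-z_{j_b}+y)$ stays nonzero) already yields the $y$-divisibility without it.
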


Lemmas \ref{lem:divbye}-\ref {lem:small} are analogs  of lemmas in \cite[Section 6.1]{RTV2} and proved
similarly. The lemmas follow from the diagrammatic presentation of weight functions in Section 
\ref{combI}.

\subsection{Cohomology classes $\kappa_{\sigma,I}$}
 \label{sec:existence}

\begin{lemma} \label{thm:kappaW}
There exist elements $\kappa_{\sigma,I}$ in $H_T^*(\TGrkn)\ox \L_{\lambda,\zz,y}$ such that for all torus fixed points $x_J$ in $\TGrkn$ we have
\[
\kappa_{\sigma,I}|_{x_J}=\Wt^+_{\sigma,I}(\lambda,\zz_J,\zz,y).
\]
\end{lemma}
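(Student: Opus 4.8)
The plan is to use the characterization of the image of the equivariant localization map $\Loc$ that was recalled just after \eqref{eqn:loc2}: a tuple $(f_J)_{J\in\Ik}$ of elements of $H^*_T(x_J)\ox\L_{\lambda,\zz,y}$ lies in the image of $\Loc$ (that is, defines a class in $H^*_T(\TGrkn)\ox\L_{\lambda,\zz,y}$) if and only if for every transposition $s_{i,j}$ and every $J\in\Ik$ the difference $f_J-f_{s_{i,j}(J)}$ is divisible by $z_i-z_j$ in $\Czy$ (extended by the allowed denominators). So I would set $\kappa_{\sigma,I}$ to be the class whose $x_J$-component is declared to be $\Wt^+_{\sigma,I}(\lambda,\zz_J,\zz,y)$, and the whole content of the proof is to verify this divisibility condition.

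First I would reduce to adjacent transpositions $s_{a,a+1}$: since the $z_i-z_j$ for all $i\ne j$ together with the GKM-type graph generate the needed conditions, and since the recursion in Proposition \ref{prop:Wrec} is stated for adjacent transpositions, it suffices to check, for each $a$ and each $J$, that $\Wt^+_{\sigma,I}(\lambda,\zz_J,\zz,y)-\Wt^+_{\sigma,I}(\lambda,\zz_{s_{a,a+1}(J)},\zz,y)$ is divisible by $z_a-z_{a+1}$. Unwinding the definition $\Wt^+_{\sigma,J}=W_{\sigma,J}/(C^{(0)}_{\sigma,J}C^{(1)}_{\sigma,J}e_k)$, and noting that the $C$-factors and $e_k$ do not involve the $\zz$ variables that get swapped in a way that interferes (the $C^{(r)}_{\sigma,I}$ depend only on $\lambda,y$, and $e_k(\zz_J,y)$ is what it is), the problem becomes a statement about the weight functions $W_{\sigma,I}$ themselves: that $W_{\sigma,I}(\lambda,\zz_J,\zz,y)$, as $J$ ranges over $\Ik$, satisfies the interpolation/divisibility pattern. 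This is exactly the kind of statement packaged in Lemmas \ref{lem:divbye}--\ref{lem:small}, and in particular the recursion of Proposition \ref{prop:Wrec}, rewritten after the substitution $\tt\mapsto\zz_J$, relates the two needed values by coefficients that are manifestly regular at $z_a=z_{a+1}$ — so one reads off that at $z_a=z_{a+1}$ the two substituted values agree.

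Concretely, I would argue as follows. Fix $a$ and set $z_a=z_{a+1}$. If $s_{a,a+1}(J)=J$ (i.e. $a,a+1$ are both in $J$ or both in $\Ibar{J}$), then by the first case of Proposition \ref{prop:Wrec} applied in the $\zz$-arguments, $W_{\sigma,I}(\lambda,\zz_J,z_\bullet\ \text{with}\ z_a\leftrightarrow z_{a+1})=W_{\sigma,I}(\lambda,\zz_J,\zz)$, and since also $\zz_J$ is unchanged under the swap, the two restrictions literally coincide, so their difference is divisible by $z_a-z_{a+1}$. If $a\in J$, $a+1\notin J$ (or vice versa), then $\zz_J$ and $\zz_{s_{a,a+1}(J)}$ differ exactly by replacing $z_a$ with $z_{a+1}$ in one slot; using the second (resp. third) case of Proposition \ref{prop:Wrec} to express $W_{\sigma,s_{a,a+1}(I)}$ in terms of $W_{\sigma,I}$ and $W_{\sigma,s_{a,a+1}(I)}$ with coefficients that are rational functions regular at $z_a=z_{a+1}$ and taking value giving an identity there, one concludes that the difference of the two restrictions vanishes at $z_a=z_{a+1}$, hence is divisible by $z_a-z_{a+1}$. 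One must also check the denominator $C^{(0)}_{\sigma,I}C^{(1)}_{\sigma,I}$ introduces only factors of the form $\lambda+\ell y$, which are units in $\L_{\lambda,\zz,y}$, so dividing by them is harmless; this is where the choice of coefficient ring $\L_{\lambda,\zz,y}$ (with those particular allowed denominators) is used, and it is the reason the lemma is stated over $H^*_T(\TGrkn)\ox\L_{\lambda,\zz,y}$ rather than over $H^*_T(\TGrkn)$ itself.

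The main obstacle, I expect, is bookkeeping rather than a conceptual difficulty: one has to be careful that the coefficients appearing in the recursion of Proposition \ref{prop:Wrec}, after the substitution $\tt\mapsto\zz_J$, are genuinely elements of $\L_{\lambda,\zz,y}$ with poles only along the permitted loci $z_i-z_j$, $z_i-z_j-y$ and $\lambda+\ell y$ — in particular the factor $(z_{a+1}-z_a+y)$ in the denominators of the recursion is allowed, and the factors $(\lambda-(w+1)y)$ etc. are allowed — so that the manipulation stays inside the ring and the divisibility conclusion is meaningful. A secondary point is to make sure the GKM characterization is being invoked in the correct (localized) form \eqref{eqn:loc2}, i.e. that the image of $\Loc$ after tensoring with $\L_{\lambda,\zz,y}$ is still cut out by the same divisibility-by-$(z_i-z_j)$ conditions; this follows from the injectivity of $\Loc$ together with flatness of the base change $\C[\zz,y]\to\L_{\lambda,\zz,y}$, which is immediate since we are only inverting and adjoining. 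Granting these, the existence of $\kappa_{\sigma,I}$ with the prescribed restrictions is exactly the assertion that the tuple $(\Wt^+_{\sigma,I}(\lambda,\zz_J,\zz,y))_{J}$ satisfies the GKM conditions, which the above establishes.
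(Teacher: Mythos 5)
Your overall framing coincides with the paper's: one declares the restrictions of $\kappa_{\sigma,I}$ to be $\Wt^+_{\sigma,I}(\lambda,\zz_J,\zz,y)$ and checks that this tuple lies in the image of $\Loc$ via the divisibility criterion of Section \ref{sec:loc}, using Lemma \ref{lem:divbye} to know each component lands in $\C[\zz,y]\ox\L_{\lambda,\zz,y}$ and the fact that the extra denominators $e_k(\zz_J,y)$ and $C^{(0)}_{\sigma,I}C^{(1)}_{\sigma,I}$ are units in $\L_{\lambda,\zz,y}$. However, your verification of the divisibility condition has a genuine gap in two places. First, the reduction to adjacent transpositions is not valid: the criterion requires divisibility of $f_J-f_{s_{i,j}(J)}$ by $z_i-z_j$ for \emph{all} $i\neq j$, and the conditions attached to non-adjacent transpositions are independent of the adjacent ones (already for $n=3$, $k=1$ the condition along $z_1-z_3$ does not follow from those along $z_1-z_2$ and $z_2-z_3$). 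Second, Proposition \ref{prop:Wrec} is the wrong tool for this step: it relates the weight functions for \emph{different index sets} $I$ and $s_{a,a+1}(I)$ evaluated at a single point in the $\tt$-variables, whereas the GKM condition compares \emph{one} function $\Wt^+_{\sigma,I}$ at the two substitution points $\zz_J$ and $\zz_{s_{i,j}(J)}$. Moreover, at $z_a=z_{a+1}$ the coefficient of $W_I$ in the recursion vanishes and the coefficient of $W_{s_{a,a+1}(I)}$ becomes $1$, so the recursion degenerates to a tautology there and yields no information about the difference you need to control.

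The missing (and very short) argument is the symmetry of the weight function in the $\tt$-variables: $W_{\sigma,I}(\lambda,\tt,\zz,y)$ is by construction a symmetrization over $S_k$, hence depends only on the multiset $\{t_1,\dots,t_k\}$. For $i\in J$, $j\notin J$, the substitution points $\zz_J$ and $\zz_{s_{i,j}(J)}$ differ only by replacing $z_i$ with $z_j$ in one slot, so upon setting $z_i=z_j$ they coincide as multisets and the two restrictions of $W_{\sigma,I}$ become equal; the same holds for $e_k(\zz_J,y)$ versus $e_k(\zz_{s_{i,j}(J)},y)$, these being units of $\L_{\zz,y}$ not divisible by $z_i-z_j$, while $C^{(0)}_{\sigma,I}C^{(1)}_{\sigma,I}$ does not depend on $J$ at all and is a unit of $\L_{\lambda,y}$. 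Hence $\Wt^+_{\sigma,I}(\lambda,\zz_J,\zz,y)-\Wt^+_{\sigma,I}(\lambda,\zz_{s_{i,j}(J)},\zz,y)$ is divisible by $z_i-z_j$ for every $i\neq j$ directly, with no reduction to adjacent transpositions and no appeal to the recursion. This is what the paper's proof is (tersely) asserting when it says the substitutions ``obviously satisfy the divisibility properties of Section \ref{sec:loc}.'' Your remarks about the permitted denominators and about the localized form of the image criterion are correct and worth keeping; it is only the central divisibility step that needs to be replaced by the symmetry argument.
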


\begin{proof}
The substitutions $\Wt^+_{\sigma,I}(\lambda,\zz_J,\zz,y)$ belong to $\C[\zz,y]\otimes \L_{\lambda,\zz,y}$ by Lemma \ref{lem:divbye} and 
they obviously satisfy the divisibility properties of Section \ref{sec:loc}. This proves the statement. 
\end{proof}

We can informally write that $\kappa_{\sigma,I}=[\Wt^+_{\sigma,I}]$. Lemmas \ref{lem:zero}-\ref{lem:small}
give  properties of  restrictions of classes $\kappa_{\sigma,I}$ at the fixed points.

\begin{remark} 
\label{rem CSM} Each class $\kappa_{\sigma,I}$ is a polynomial in $\la$ of degree $k$. The coefficient
of $\la^k$ is the class  denoted by $\kappa_{\sigma,I}$ in \cite{RTV1}. The classes $\kappa_{\sigma,I}$ of \cite{RTV1} are
the Chern-Schwartz-MacPherson classes of Schubert cells, see \cite{RV}. In that sense the classes 
$\kappa_{\sigma,I}$  of Lemma \ref{thm:kappaW} are  dynamical deformations of 
the Chern-Schwartz-MacPherson classes of Schubert cells.

\end{remark}

\section{Stable envelope maps and R-matrices}
\label{Stable envelope maps and R-matrices}

\subsection{Stable envelope maps} \label{sec:stab}

Recall from (\ref{eqn:loc2}) that $H^*_T(\XX_n) \ox \L_{\lambda,\zz,y}$ is a free module over $\L_{\lambda,\zz,y}$. For a fixed $\sigma \in S_n$ the elements $\kappa_{\sigma,I}$ form a basis in it, because of Lemma~\ref{thm:kappaW} and the triangularity property of Lemma~\ref{lem:zero}.

\begin{definition} For $\sigma\in S_n$ we define the {\em stable envelope map}
\[
\Stab_{\sigma}: H^*_{T}(\XX_n^T) \ox \L_{\lambda,\zz,y} \to H^*_T(\XX_n) \ox \L_{\lambda,\zz,y},
\qquad\qquad
1_I \mapsto \kappa_{\sigma,I},
\]
where $I\in \Ik$ for $0 \leq k \leq n$.
\end{definition}

The $\Stab_\sigma$ map is an isomorphism of free $\L_{\lambda,\zz,y}$-modules.

Recall that we identified $H^*_T(\XX_n^T)\ox \L_{\lambda,\zz,y}$ with $(\C^2)^{\ox n}\ox \L_{\lambda,\zz,y}$ via $1_I\mapsto v_I$ and we identified $\kappa_{\sigma,I}$ with $[\Wt^+_{\sigma,I}]$. Hence $\Stab_{\sigma,I}$ can also be viewed as a map
\[
\Stab_\sigma : (\C^2)^{\ox n}\ox \L_{\lambda,\zz,y} \to H^*_T(\XX_n) \ox \L_{\lambda,\zz,y}, \qquad\qquad v_I \mapsto [\Wt^+_{\sigma,I}].
\]
For example, for $n=2$, $k=1$ we have 
\begin{align*}
\Stab_{\id}: & & & \\
& v_{\{1\}}=v_1\ox v_2  \mapsto  [\Wt^+_{\id,\{1\}}]=\left[\frac{1}{\lambda(\lambda+y)}
(\lambda+t_1-z_1+y)(t_1-z_2)\right] \\
& v_{\{2\}}=v_2\ox v_1  \mapsto  [\Wt^+_{\id,\{2\}}]=\left[\frac{1}{(\lambda-y)\lambda}
(t_1-z_1+y)(\lambda+t_1-z_2)\right]\\
\Stab_{s_{1,2}}: & & & \\
& v_{\{1\}}=v_1\ox v_2  \mapsto  [\Wt^+_{s_{1,2},\{1\}}]=\left[\frac1{(\lambda-y)\lambda}
(\lambda+t_1-z_1)(t_1-z_2+y)\right] \\
& v_{\{2\}}=v_2\ox v_1  \mapsto  [\Wt^+_{s_{1,2},\{2\}}]=\left[\frac1{\lambda(\lambda+y)}
(t_1-z_1)(\lambda+t_1-z_2+y)\right].
\end{align*}

\subsection{Geometric R-matrices}

For $\sigma, \sigma' \in S_n$ we define the {\em geometric R-matrix}
\[
\R_{\sigma',\sigma}=\Stab_{\sigma'}^{-1} \circ \Stab_{\sigma} \in
\End\left( H_T^*(\XX_n^T) \ox \L_{\lambda,\zz,y} \right) =
\End\left( (\C^2)^{\ox n} \ox \L_{\lambda,\zz,y} \right).
\]

\noindent For example, the calculation above yields that for $n=2$ the matrix of $\R_{\id, s}$ in the basis
\[v_1 \ox v_1,\quad  v_1 \ox v_2, \quad  v_2 \ox v_1,\quad  v_2 \ox v_2\]
is
\[
\left( \begin{array}{cccc}
1 & 0 & 0 & 0\\
0 & \frac{(\lambda + y)(z_1-z_2)}{\lambda(z_1-z_2-y)} &  -\frac{(\lambda + z_1-z_2)y}{\lambda(z_1-z_2-y)} &0
\\
0 & -\frac{(\lambda - z_1+z_2)y}{\lambda(z_1-z_2-y)}  & \frac{(\lambda - y)(z_1-z_2)}{\lambda(z_1-z_2-y)}&0
\\
0 & 0 & 0 & 1
\end{array} \right).
\]

\subsection{Dynamical R-matrix} \label{sec:dynR}

Let $\lambda,z,y$ be parameters. Consider the {\em rational  dynamical R-matrix} $R(\lambda,z,y)$ $\in \End(\C^2 \ox \C^2) \otimes \L_{\lambda,\zz,y} $ given by the formula
\bean
\label{R-m}
R(\la,z,y)=
\left( \begin{array}{cccc}
1 & 0 & 0 & 0\\
0 & \frac{(\lambda + y)z}{\lambda(z-y)} &  -\frac{(\lambda + z)y}{\lambda(z-y)} &0
\\
0 & -\frac{(\lambda - z)y}{\lambda(z-y)}  & \frac{(\lambda - y)z}{\lambda(z-y)}&0
\\
0 & 0 & 0 & 1
\end{array} \right).
\eean
in the basis  $v_1\ox v_1, v_1\ox v_2, v_2\ox v_1, v_2\ox v_2$.

Let $R^{(i,j)}(\lambda,z,y)$ be the linear map that acts on $(\C^2)^{\ox n}\otimes \L_{\lambda,\zz,y} $ in such a way that the dynamical R-matrix acts in the $i$ and $j$ factors (here $(i,j)$ is an ordered pair). For $n=2$ we have $R^{(1,2)}(\lambda,z,y)=R(\lambda,z,y)$.
Easy calculation shows the {\em inversion relation}
\bean
\label{PRP}
R^{(1,2)}(\lambda,z,y)R^{(2,1)}(\lambda,-z,y) = \on{Id}.
\eean

Set $h(v_1)=1$ and $h(v_2)=-1$. Define $h^{(j)}(v_{i_1} \ox  \ldots \ox v_{i_n})=h(v_{i_j})$ for an elementary tensor $v_{i_1} \ox  \ldots \ox v_{i_n}$ and extend this linearly to the tensor product. This notation will be used in the whole paper.

One can verify by direct calculation the {\em dynamical Yang-Baxter equation}
\bean
\label{ybe}
&&
R^{(1,2)}(\lambda-y h^{(3)},z-w)R^{(1,3)}(\lambda,z)R^{(2,3)}(\lambda-y h^{(1)},w) =\phantom{aaaaaaaaaaaaaaa}
\\
&&
\notag
\phantom{aaaaaaaaaaaaaaa} = R^{(2,3)}(\lambda,w)R^{(1,3)}(\lambda-y h^{(2)},z)R^{(1,2)}(\lambda,z-w) .
\eean
Here $R^{(1,2)}(\la - yh^{(3)},z-w)$ means that if
$a\ox b\ox c\in \C^2\ox \C^2\ox \C^2$ and 
$h c = \mu c$, $\mu \in\C$, then 
$R^{(1,2)}(\la- yh^{(3)}, w-z) a\ox b\ox c = R^{(1,2)}(\la- y\mu, w-z)(a\ox b)\ox c$, and the other symbols have a similar meaning.

\subsection{Geometric and dynamical R-matrices coincide}

The calculation at the end of Section~\ref{sec:stab} can be rephrased to the fact that for $n=2$ we have
\[ \R_{\id, s} = R^{(1,2)}(\lambda,z_1-z_2,y), \]
or equivalently
\[
\R_{s, \id} = R^{(2,1)}(\lambda,z_2-z_1,y)=
R^{(1,2)}(\lambda,z_1-z_2,y)^{-1}.
\]
More generally the following proposition holds.

\begin{proposition}[Geometric and dynamical R-matrices coincide]
We have
\[
\R_{\sigma s_{a,a+1},\sigma} = R^{(\sigma(a+1),\sigma(a))} (\lambda-y\sum_{i=a+2}^n h^{(\sigma(i))}, z_{\sigma(a+1)}-z_{\sigma(a)},y)
\]
for any $a=1,\dots,n-1$.
 
\end{proposition}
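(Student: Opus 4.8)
The plan is to reduce the general statement to the $n=2$ case already verified at the end of Section~\ref{sec:stab}, by showing that the geometric R-matrix factors through a two-fold tensor factor. First I would unwind the definition $\R_{\sigma s_{a,a+1},\sigma}=\Stab_{\sigma s_{a,a+1}}^{-1}\circ\Stab_{\sigma}$, so that everything comes down to expressing each $\kappa_{\sigma,I}$ as an explicit $\L_{\lambda,\zz,y}$-linear combination of the $\kappa_{\sigma s_{a,a+1},J}$. By the triangularity of Lemma~\ref{lem:zero} and the nonvanishing of the diagonal term in Lemma~\ref{lem:main}, the transition matrix is computable from the restrictions to fixed points; so the entire claim is equivalent to a finite list of identities among the rational functions $\Wt^+_{\sigma,I}(\lambda,\zz_J,\zz,y)$.

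The key tool is the recursion in Proposition~\ref{prop:Wrec}. Specializing the three cases of that recursion (the case $s_{a,a+1}(I)=I$, and the two cases where exactly one of $a,a+1$ lies in $I$) and dividing by the appropriate $C^{(0)}_{\sigma,J}C^{(1)}_{\sigma,J}e_k$ normalization factors, I would rewrite the recursion in terms of the $\Wt^+$ functions and read off that, under the swap $z_a\leftrightarrow z_{a+1}$, the vector $(\Wt^+_{\sigma s_{a,a+1},I})_I$ is obtained from $(\Wt^+_{\sigma,I})_I$ by applying precisely the $2\times 2$ block $R(\lambda',z_{\sigma(a+1)}-z_{\sigma(a)},y)$ acting in the $\sigma(a+1),\sigma(a)$ tensor factors, where the shift $\lambda'=\lambda-y\sum_{i=a+2}^n h^{(\sigma(i))}$ comes from the dependence of the weight functions on $w(\cdot,\cdot)$: the quantity $w(\sigma^{-1}(i),\sigma^{-1}(I))$, when $i$ is the element sitting in position $a$ or $a+1$ of the $\sigma$-ordering, differs from the ``bare'' weight exactly by the sum of $h$-values of the entries in positions $a+2,\dots,n$. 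Tracking this bookkeeping carefully is what produces the dynamical shift in the argument of $R$.

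Concretely, the steps in order are: (1) show $\R_{\sigma s_{a,a+1},\sigma}$ acts as the identity on all tensor factors other than $\sigma(a)$ and $\sigma(a+1)$ and on the weight-zero summands of the remaining $\C^2\ox\C^2$, so that only the middle $2\times2$ block matters; (2) for the weight-$(\pm1)\mp$ nothing and the two mixed configurations, match the coefficients in the specialized, renormalized recursion of Proposition~\ref{prop:Wrec} against the entries of $R(\lambda,z,y)$ in \Ref{R-m}; (3) identify the weight $w(a,I)$ (resp.\ $w(a+1,I)$) appearing in those coefficients with $(n-a-1)\mp$ corrections coming from the relative positions, and rewrite it as $\frac1y\bigl(\lambda-\lambda'\bigr)$ plus a constant, thereby converting $\lambda$ into the shifted argument $\lambda-y\sum_{i=a+2}^n h^{(\sigma(i))}$; (4) conclude by the fact (\ref{eqn:loc2}) that an equality of fixed-point restrictions is an equality of classes, hence of the maps $\Stab_\sigma$, hence of $\R_{\sigma s_{a,a+1},\sigma}$.

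The main obstacle I expect is step~(3): getting the dynamical shift $\lambda\mapsto\lambda-y\sum_{i=a+2}^n h^{(\sigma(i))}$ exactly right, with the correct range of the sum and the correct sign, by carefully comparing the definition of $w(i,I)$ with the $h^{(j)}$-bookkeeping. The recursion of Proposition~\ref{prop:Wrec} is stated for $\sigma=\id$ and involves $w(a,I)$ directly; transporting it to general $\sigma$ via $W_{\sigma,I}(\lambda,\tt,\zz,y)=W_{\sigma^{-1}(I)}(\lambda,\tt,z_{\sigma(1)},\dots,z_{\sigma(n)},y)$ means the swap $z_a\leftrightarrow z_{a+1}$ in the $\sigma$-reordered variables corresponds to transposing positions $a,a+1$, and one must verify that the weight $w$ that shows up is computed with respect to the reordered index set, which after a short combinatorial check equals the bare positional weight shifted by the $h$-values of the tail positions $a+2,\dots,n$. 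Once this identification is pinned down, the remaining comparisons are the same routine algebra that underlies the $n=2$ computation already displayed, together with the inversion relation \Ref{PRP}.
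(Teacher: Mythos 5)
Your proposal is correct and follows essentially the same route as the paper, whose proof consists of citing Proposition~\ref{prop:Wrec}: one transports the recursion to the $\Wt^+$ normalization, reads off the transition matrix between the $\kappa_{\sigma,I}$ and $\kappa_{\sigma s_{a,a+1},J}$ bases at the fixed points, and matches it with the entries of $R$, the dynamical shift arising exactly as you describe from the identity $w(a,I)+1=\sum_{j\ge a+2}h^{(j)}|_{v_I}$ (and similarly for $w(a+1,I)$). The only caveat is presentational: a few phrases in your step (2)--(3) are garbled, but the underlying bookkeeping is the right one.
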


\begin{proof}
The proposition follows from Proposition \ref{prop:Wrec}, cf. \cite[Theorem 3.7]{RTV1},  \cite[Theorem 7.1]{RTV2}.
\end{proof}

\section{The $\xi_I$ vectors}
\label{sec:xi}

\subsection{Definition}
The  vectors $\xi_I$ defined in this section are  dynamical analogs of the vectors  $\xi_I$  in \cite{GRTV}, \cite{RTV1}, \cite{RTV2}.

\medskip

Recall that for $I\in \Ik$ we have $v_I=v_{i_1}\ox \ldots \ox v_{i_n} \in (\C^2)^{\ox n}$ where $i_j=1$ if $j\in I$ and $i_j=2$ if $j\in \Ibar$.

\begin{definition} For $\sigma\in S_n$, $I\in \Ik$ let
\bean
\label{XI}
\xi_I= \frac{1}{Q_I(\zz,y)} \sum_{J\in \Ik} \Wt^-_J(\lambda,\zz_I,\zz,y) v_{J}.
\eean
\end{definition}

\begin{example} For $n=2$, $k=1$ we have
\[
\xi_{\{1\}}= \lambda v_{\{1\}},\qquad
\xi_{\{2\}}= \frac{ -(\lambda+z_1-z_2)y}{z_1-z_2-y} v_{\{1\}} + \frac{(\lambda-y)(z_1-z_2)}{z_1-z_2-y} v_{\{2\}}.
\]
For $n=3,$ $ k=1$ we have
\bea
&&
\xi_{\{1\}} = (\lambda+y) v_{\{1\}},
\\
&&
\xi_{\{2\}} = \frac{-(\lambda+z_1-z_2+y)y}{z_1-z_2-y} v_{\{1\}} + \frac{\lambda(z_1-z_2)}{z_1-z_2-y} v_{\{2\}},
\eea
\bea
&&
\!\!\!\!\!
\xi_{\{3\}} = \frac{-(\lambda+z_1-z_3+y)y}{z_1-z_3-y} v_{\{1\}}  +
      \frac{-(\lambda+z_2-z_3)(z_1-z_3)y}{(z_1-z_3-y)(z_2-z_3-y)} v_{\{2\}} +
\\
&&
\phantom{aaaaaaaaaaaaaaaaa}
\phantom{aaaaaaaaaaaaaaaaa}
+
      (\lambda-y)\frac{(z_1-z_3)(z_2-z_3)}{(z_1-z_3-y)(z_2-z_3-y)} v_{\{3\}}.
\eea
\end{example}

\noindent Let $\Imin=\{1,\ldots,k\}\subset [n]$ and $\Imax=\{n-k+1,\ldots,n\}\subset [n]$.

\begin{proposition} \label{thm:xiI}
The coefficient of $v_J$ in $\xi_I$ is 0 unless $J\leq_{\id} I$. The coefficient of $v_I$ in $\xi_I$ is
\[
C^{(1)}_{\id,I} \cdot
\prod_{b<a}
\prod_{\satop{a\in I}{b\in \Ibar}} \frac{ z_b-z_a}{z_b-z_a-y}.
\]
In particular,
\[
\xi_{\Imin}=\prod_{i=1}^k (\lambda+(n-k-i)y) \cdot v_{\Imin}.
\]
Also, the coefficient of $v_{\Imax}$ in $\xi_{\Imax}$ is
\[
\prod_{i=1}^k (\lambda-iy) \cdot \prod_{\satop{a\in \Imax}{b\not\in \Imax}} \frac{ z_b-z_a }{z_b-z_a-y}=
\prod_{i=1}^k (\lambda-iy) \cdot \frac{ R_{\Imax}(\zz)}{Q_{\Imax}(\zz)}.
\]
\end{proposition}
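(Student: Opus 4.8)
The plan is to rewrite the coefficient of $v_J$ in $\xi_I$, namely $\frac{1}{Q_I(\zz,y)}\,\Wt^-_J(\la,\zz_I,\zz,y)$, in terms of the weight functions $\Wt_{s_0,J}$ of the longest permutation, and then to invoke the interpolation Lemmas \ref{lem:zero} and \ref{lem:main}. By the definitions in Section \ref{sec:Wversions} one has $\Wt^-_J(\la,\tt,\zz,y)=(-1)^k\,\Wt_{s_0,J}(-\la-(n-2k)y,\tt,\zz,y)$, so the whole statement is a translation of known facts about $\Wt_{s_0,J}$ under the affine change $\la\mapsto-\la-(n-2k)y$ of the dynamical variable. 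For the triangularity, Lemma \ref{lem:zero} gives $\Wt_{s_0,J}(\,\cdot\,,\zz_I,\zz,y)=0$ unless $I\le_{s_0}J$; writing $I=\{i_1<\dots<i_k\}$, $J=\{j_1<\dots<j_k\}$, the $a$-th smallest element of $s_0^{-1}(I)$ equals $n+1-i_{k+1-a}$ and likewise for $J$, so $I\le_{s_0}J$ amounts to $i_b\ge j_b$ for all $b$, i.e.\ to $J\le_{\id}I$, which is the first assertion.

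For the diagonal coefficient I would apply Lemma \ref{lem:main} with $\si=s_0$, substitute $\la\mapsto-\la-(n-2k)y$, and divide by $Q_I$. Three elementary computations enter. First, for $i=i_p\in I$ one finds $w(i_p,I)=i_p-n+2k-2p$ and $w(s_0^{-1}(i_p),s_0^{-1}(I))=\#\{a\in I:a<i_p\}-\#\{a\in\Ibar:a<i_p\}=2p-1-i_p$; since $-(n-2k)-w(s_0^{-1}(i_p),s_0^{-1}(I))=w(i_p,I)+1$, it follows that $C^{(0)}_{s_0,I}(-\la-(n-2k)y)=(-1)^k\,C^{(1)}_{\id,I}(\la)$. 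Second, the substitution $\si(a)\leftrightarrow n+1-a$ in the definitions of Section \ref{sec:edefs} gives $e^{\hor}_{s_0,I,-}=\prod_{\satop{b<a}{a\in I,\,b\in\Ibar}}(z_b-z_a)$ and $e^{\ver}_{s_0,I,-}=\prod_{\satop{b>a}{a\in I,\,b\in\Ibar}}(z_a-z_b+y)$, the latter being exactly the ``$b>a$'' half of $Q_I$; hence
\[
\frac{e^{\hor}_{s_0,I,-}\,e^{\ver}_{s_0,I,-}}{Q_I}=\prod_{\satop{b<a}{a\in I,\,b\in\Ibar}}\frac{z_b-z_a}{z_a-z_b+y}=(-1)^{\ell_{\id,I}}\prod_{\satop{b<a}{a\in I,\,b\in\Ibar}}\frac{z_b-z_a}{z_b-z_a-y},
\]
using $z_a-z_b+y=-(z_b-z_a-y)$ and that $\ell_{\id,I}$ counts the pairs $b<a$ with $a\in I$, $b\in\Ibar$. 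Third, the total power of $-1$ is $k+\bigl((n+1)k+\ell_{s_0,I}\bigr)+k+\ell_{\id,I}$, which is even because $\ell_{s_0,I}+\ell_{\id,I}=k(n-k)$ and $(n+1)k+k(n-k)=2nk+k(1-k)$ is even. Assembling these gives the stated formula for the coefficient of $v_I$ in $\xi_I$.

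The two displayed special cases follow by specialization. For $\Imin=\{1,\dots,k\}$ the partial order $\le_{\id}$ has $\Imin$ as its unique minimum, so $\xi_{\Imin}$ is a multiple of $v_{\Imin}$; the product over $b<a$ with $a\in\Imin$, $b\in\overline{\Imin}$ is empty, and $w(p,\Imin)=2k-n-p$ gives $C^{(1)}_{\id,\Imin}(\la)=\prod_{p=1}^k(\la+(n-2k+p-1)y)=\prod_{i=1}^k(\la+(n-k-i)y)$. For $\Imax=\{n-k+1,\dots,n\}$ every pair with $a\in\Imax$, $b\notin\Imax$ automatically has $b<a$, so the product becomes $\prod_{a\in\Imax,\,b\notin\Imax}\frac{z_b-z_a}{z_b-z_a-y}=\frac{R_{\Imax}(\zz)}{Q_{\Imax}(\zz,y)}$, the global sign $(-1)^{k(n-k)}$ cancelling between numerator and denominator, while $w(n-k+p,\Imax)=k-p$ gives $C^{(1)}_{\id,\Imax}(\la)=\prod_{p=1}^k(\la-(k-p+1)y)=\prod_{i=1}^k(\la-iy)$.

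The main obstacle is not conceptual but bookkeeping: verifying the $w$-statistic identity that turns $C^{(0)}_{s_0,I}$ at the shifted argument into $C^{(1)}_{\id,I}$, and correctly tracking the four separate sign contributions — the $(-1)^k$ in the definition of $\Wt^-$, the $(-1)^k$ obtained by pulling $-1$ out of each of the $k$ linear factors of $C^{(0)}_{s_0,I}$ under $\la\mapsto-\la-(n-2k)y$, the $(-1)^{(n+1)k+\ell_{s_0,I}}$ of Lemma \ref{lem:main}, and the $(-1)^{\ell_{\id,I}}$ coming from $z_a-z_b+y=-(z_b-z_a-y)$. Each of these reduces to elementary counts of elements of $I$ and $\Ibar$ lying below a given index, of the type already performed in Section \ref{combI}.
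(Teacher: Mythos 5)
Your proof is correct and follows exactly the route the paper intends: the paper's own proof is a one-line reference to the definition of $\xi_I$ and the interpolation Lemmas \ref{lem:zero} and \ref{lem:main}, and you have simply supplied the sign and $w$-statistic bookkeeping (all of which I checked and is accurate). Nothing further is needed.
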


\begin{proof}
The statements follow from the definition of $\xi_I$ and the interpolation properties of weight functions in Section \ref{sec:interpolation},
cf. \cite[Propositions 2.14]{GRTV},   \cite[Theorem 8.2]{RTV2}.
\end{proof}

\begin{corollary}
\label{cor basis}
The vectors $\{\xi_I\}_{I\in \mathcal I_k}$  form a basis of the $\L_{\la,\zz,y}$-module $(\C^2)^{\ox n} \ox \L_{\la,\zz,y}$.
\end{corollary}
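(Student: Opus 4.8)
The plan is to deduce the corollary from Proposition~\ref{thm:xiI} by the standard triangularity argument. Fix $k$ and restrict attention to the summand $(\C^2)^{\ox n}_k \ox \L_{\la,\zz,y}$, which is a free $\L_{\la,\zz,y}$-module with basis $\{v_J\}_{J\in\Ik}$. By Proposition~\ref{thm:xiI}, each $\xi_I$ lies in this summand and has an expansion $\xi_I = \sum_{J\leq_{\id}I} c_{I,J}\, v_J$ that is triangular with respect to the partial order $\leq_{\id}$ on $\Ik$, with diagonal coefficient
\[
c_{I,I}=C^{(1)}_{\id,I}\cdot \prod_{b<a}\prod_{\satop{a\in I}{b\in\Ibar}} \frac{z_b-z_a}{z_b-z_a-y}.
\]
So the first step is simply to record that the change-of-basis matrix from $\{v_J\}$ to $\{\xi_I\}$ is triangular.

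Next I would argue that every diagonal entry $c_{I,I}$ is a unit in $\L_{\la,\zz,y}$. The factor $\prod_{b<a,\,a\in I,\,b\in\Ibar}\frac{z_b-z_a}{z_b-z_a-y}$ is visibly invertible in $\L_{\zz,y}\subset\L_{\la,\zz,y}$, since both numerator and denominator are among the permitted factors $z_i-z_j$ and $z_i-z_j-y$. For the factor $C^{(1)}_{\id,I}=\prod_{i\in I}(\la-(w(i,I)+1)y)$, each term is of the form $\la+ly$ with $l\in\Z$, hence invertible in $\L_{\la,y}\subset\L_{\la,\zz,y}$ by the very definition of $\L_{\la,y}$. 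Therefore $c_{I,I}$ is a product of units, hence a unit.

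Finally, a triangular matrix over a commutative ring whose diagonal entries are units is invertible over that ring: order $\Ik$ by any linear refinement of $\leq_{\id}$, so the matrix $(c_{I,J})$ becomes lower-triangular with invertible diagonal, and its determinant $\prod_I c_{I,I}$ is a unit. Hence $\{\xi_I\}_{I\in\Ik}$ is also an $\L_{\la,\zz,y}$-basis of $(\C^2)^{\ox n}_k\ox\L_{\la,\zz,y}$. Taking the direct sum over $k=0,\dots,n$ gives the claim for $(\C^2)^{\ox n}\ox\L_{\la,\zz,y}$.

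There is no real obstacle here; the only point requiring a moment's care is checking that the diagonal coefficient is genuinely a unit in the specific ring $\L_{\la,\zz,y}$ — i.e.\ that the denominators $\la-(w(i,I)+1)y$ and $z_b-z_a-y$ are exactly of the allowed types appearing in the definitions of $\L_{\la,y}$ and $\L_{\zz,y}$ in Section~\ref{sec:denom}. Everything else is the formal triangularity argument, which also underlies the analogous statements cited from \cite{GRTV,RTV1,RTV2}.
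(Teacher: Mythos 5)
Your argument is correct and is exactly the intended one: the paper states this as an immediate corollary of Proposition~\ref{thm:xiI}, relying on the same triangularity of the transition matrix $(\xi_I:v_J)$ with respect to $\leq_{\id}$ together with the observation that the diagonal entries are units in $\L_{\la,\zz,y}$ (each factor $\la-(w(i,I)+1)y$ and $\tfrac{z_b-z_a}{z_b-z_a-y}$ being invertible by the definitions of $\L_{\la,y}$ and $\L_{\zz,y}$ in Section~\ref{sec:denom}). Your write-up just makes explicit the verification the paper leaves to the reader.
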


\subsection{Recursive properties of the $\xi_I$ vectors}

Let $P^{(i,i+1)}$ be the operator switching the $i$th and $i+1$st factors in $(\C^2)^{\ox n}$. Let $K^{(i,i+1)}: f(z_i,z_{i+1}) \mapsto f(z_{i+1},z_i)$ be the operator that replaces the variables $z_i$ and $z_{i+1}$. Here $f$ may also depend on other variables $\lambda,z_1,\ldots,z_{i-1},z_{i+2},\ldots, z_n,y$, and can be vector-valued as well.

Recall the dynamical R-matrix and the notation $h^{(j)}$ from Section \ref{sec:dynR}. Define the operator
\[
\ts_i=R^{(i,i+1)}(\la-y\sum_{k=i+2}^{n} h^{(k)},z_{i}-z_{i+1}) \circ P^{(i,i+1)} \circ K^{(i,i+1)}
\]
in $\End( (\C^2)^{\ox n} ) \ox \L_{\lambda,\zz,y}$.

\begin{proposition}
The $\ts_i$ operators satisfy the relations
\[
\ts_i^2 =1, \qquad  \ts_{i+1}\ts_i \ts_{i+1} = \ts_i \ts_{i+1} \ts_i, \qquad
\ts_i \ts_j = \ts_j \ts_i \ \ \text{if}\ \  |i-j|> 1,
\]
and hence they define an action of $S_n$. Moreover,
\[
\ts_i z_i =z_{i+1} \ts_i, \ \ \ \ \ts_i z_{i+1} =z_i \ts_i,\ \ \ \ \ts_i z_j =z_j \ts_i, \ \ \text{if}\ \  j\not=i,i+1,
\]
where $z_1, \ldots, z_n$ are considered as the scalar operators on $(\C^2)^{\ox n} \ox \L_{\lambda,\zz,y}$ of multiplication by the respective variable.
\end{proposition}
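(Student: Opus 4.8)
The plan is to deduce everything from the dynamical Yang–Baxter equation \Ref{ybe}, the inversion relation \Ref{PRP}, and elementary bookkeeping of how $P^{(i,i+1)}$ and $K^{(i,i+1)}$ interact with the $z$-dependence and the $h$-weights of the $R$-matrix. Throughout I will use that $R^{(i,i+1)}(\la,z,y)$ commutes with every $h^{(j)}$ (each $2\times 2$ block of $R$ preserves the $h$-grading), that $P^{(i,i+1)}R^{(i,i+1)}(\la,z)P^{(i,i+1)}=R^{(i+1,i)}(\la,z)$, and that $K^{(i,i+1)}$ negates $z_i-z_{i+1}$ while fixing all other $z_j$, $\la$, $y$.

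First I would verify the commutation relations $\ts_i z_i=z_{i+1}\ts_i$, etc.: these are immediate from the definition, since $P^{(i,i+1)}$ and the $R$-factor commute with all multiplication operators $z_j$ (they are $\L_{\la,\zz,y}$-linear), and $K^{(i,i+1)}$ swaps $z_i\leftrightarrow z_{i+1}$ by definition, so $K^{(i,i+1)}z_i=z_{i+1}K^{(i,i+1)}$, while $z_j$ with $j\ne i,i+1$ passes through untouched. Next, $\ts_i^2=1$: writing $A=R^{(i,i+1)}(\la-y\textstyle\sum_{k\ge i+2}h^{(k)},z_i-z_{i+1})$, one has $\ts_i^2 = A\,P^{(i,i+1)}K^{(i,i+1)}A\,P^{(i,i+1)}K^{(i,i+1)}$. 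Moving the second $P^{(i,i+1)}K^{(i,i+1)}$ to the left through $A$ turns $A$ into $R^{(i+1,i)}(\la-y\sum_{k\ge i+2}h^{(k)},z_{i+1}-z_i)$ (the $P$ swaps the tensor slots, the $K$ swaps the argument, and the $h$-sum over $k\ge i+2$ is untouched), so $\ts_i^2 = A\cdot R^{(i+1,i)}(\la-y\sum_{k\ge i+2}h^{(k)},-(z_i-z_{i+1})) \cdot (P^{(i,i+1)}K^{(i,i+1)})^2$, and the last parenthesis squares to the identity; the product of the two $R$-factors is then the identity by the inversion relation \Ref{PRP} (applied in slots $i,i+1$, with the spectral parameter $z_i-z_{i+1}$ and the shifted dynamical parameter $\la-y\sum_{k\ge i+2}h^{(k)}$, which is legitimate since \Ref{PRP} holds identically in $\la$). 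The far-commutativity $\ts_i\ts_j=\ts_j\ts_i$ for $|i-j|>1$ is a routine check that all the constituent operators in $\ts_i$ and $\ts_j$ act on disjoint tensor slots and disjoint sets of $z$-variables, with the one subtlety that $\ts_j$ for $j=i+2$ (say) changes $h^{(i+2)}$ — but $\ts_i$'s $R$-factor depends on $h$ only through the sum $\sum_{k\ge i+2}h^{(k)}$, which is invariant under $P^{(i+2,i+3)}$, so the two commute.

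The main obstacle, as always with these braid relations, is the hexagon identity $\ts_{i+1}\ts_i\ts_{i+1}=\ts_i\ts_{i+1}\ts_i$. The strategy is to expand both sides and push all the $P\cdot K$ operators to the right; since $(P^{(i,i+1)}K^{(i,i+1)})$ and $(P^{(i+1,i+2)}K^{(i+1,i+2)})$ together generate the same $S_3$-action on three adjacent slots (and on the three variables $z_i,z_{i+1},z_{i+2}$) as in the non-dynamical case, the "$P\cdot K$ parts" of the two triple products agree. What remains is an identity among products of three $R$-factors, with shifted dynamical parameters. Conjugating each $R$-factor past the trailing $P\cdot K$ operators converts indices and arguments exactly so that the resulting identity becomes \Ref{ybe} with $(1,2,3)\mapsto(i,i+1,i+2)$, tensored with the identity on the remaining slots, and with the global dynamical shift $\la\mapsto\la-y\sum_{k\ge i+3}h^{(k)}$ applied throughout (which is harmless since \Ref{ybe} is an identity in $\la$). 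The delicate point is tracking the weight shifts: on the left side the innermost factor $R^{(i,i+1)}(\la-y\sum_{k\ge i+2}h^{(k)})$ must be rewritten, after commuting past one $P^{(i+1,i+2)}K^{(i+1,i+2)}$, as an $R^{(i,i+2)}$ or $R^{(i+1,i+2)}$ with a dynamical parameter of the form $\la-yh^{(\cdot)}-y\sum_{k\ge i+3}h^{(k)}$, and one must check these match the pattern $R^{(1,2)}(\la-yh^{(3)})$, $R^{(1,3)}(\la)$, $R^{(2,3)}(\la-yh^{(1)})$ in \Ref{ybe} — I would organize this as a single careful substitution table rather than a term-by-term chase. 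Once the weight-shift dictionary is set up, the identity is exactly \Ref{ybe} and we are done; the final sentence about the $\ts_i$ defining an $S_n$-action is then immediate from the presentation of $S_n$ by adjacent transpositions.
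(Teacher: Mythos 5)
Your proposal is correct and follows exactly the route the paper takes: the paper's proof is the single sentence that the proposition follows from the dynamical Yang--Baxter equation \Ref{ybe} and the inversion relation \Ref{PRP}, and your argument is a faithful (and correctly executed) expansion of that, with the reduction of $\ts_i^2=1$ to \Ref{PRP} and of the braid relation to \Ref{ybe} (shifted by $\la\mapsto\la-y\sum_{k\ge i+3}h^{(k)}$) worked out as intended.
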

\begin{proof}
The proposition follows from the dynamical Yang-Baxter equation \Ref{ybe} and inversion relation  \Ref{PRP}.
\end{proof}

\begin{proposition}
\label{thm:xi_recursion}
We have $\xi_{s_i(I)}= \ts_i \xi_I$.
\end{proposition}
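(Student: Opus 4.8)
The statement asserts that the operator $\ts_i$ intertwines the vectors $\xi_I$ and $\xi_{s_i(I)}$, where $s_i = s_{i,i+1}$ acts on subsets by switching $i$ and $i+1$. The plan is to reduce the identity to the recursion relations of Proposition \ref{prop:Wrec} for the weight functions $W^-_J$ via the definition \Ref{XI} of $\xi_I$. First I would note that since $\xi_I = \frac{1}{Q_I(\zz,y)}\sum_J \Wt^-_J(\la,\zz_I,\zz,y) v_J$, the effect of $K^{(i,i+1)}$ on $\xi_I$ is to replace $z_i \leftrightarrow z_{i+1}$ everywhere, the effect of $P^{(i,i+1)}$ is to permute the labels of the basis vectors $v_J \mapsto v_{s_i(J)}$, and the remaining factor $R^{(i,i+1)}(\la - y\sum_{k\geq i+2}h^{(k)}, z_i - z_{i+1})$ acts as a $\la$-shifted, explicit $2\times 2$-block matrix in the $i,i+1$ tensor slots. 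So one must show that applying this composite to $\xi_I$ produces $\xi_{s_i(I)}$, i.e.\ produces $\frac{1}{Q_{s_i(I)}}\sum_J \Wt^-_J(\la,\zz_{s_i(I)},\zz,y)v_J$.

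The key computational step is to match coefficients of each $v_J$. I would split the argument into the three combinatorial cases on whether $i,i+1$ are both in (or both out of) $I$, and whether exactly one is in $I$ — mirroring the case structure of Proposition \ref{prop:Wrec}. In the "both in or both out" case $s_i(I) = I$ and both $\zz_I$ and $Q_I$ are symmetric in $z_i, z_{i+1}$, so $K^{(i,i+1)}$ acts trivially on the relevant data and one needs the first part of Proposition \ref{prop:Wrec} (symmetry of $W$) together with the fact that the $R$-matrix restricted to the relevant weight subspace is the identity on the diagonal blocks. In the mixed case, the crucial input is that $W^-_J$ is defined (Section \ref{sec:Wversions}) as $\frac{(-1)^k}{e_k}W_{s_0,K}(-\la - (n-2k)y, \tt,\zz,y)$, so one applies the recursion of Proposition \ref{prop:Wrec} with $\la$ replaced by $-\la - (n-2k)y$ and with $\sigma = s_0$; the $w$-functions $w(a,I)$ appearing in the recursion coefficients should then reassemble, after the substitution $z_I \to z_{s_i(I)}$, $z_i \leftrightarrow z_{i+1}$, into exactly the matrix entries of $R(\la', z_i - z_{i+1}, y)$ with the dynamically shifted $\la' = \la - y\sum_{k\geq i+2}h^{(k)}$ — the shift accounting for the discrepancy between $w(a,I)$ (which counts elements of $I$ and $\bar I$ above position $a$) and the "global" $\la$.

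The main obstacle I expect is the bookkeeping of the dynamical shift: verifying that the offset $w(i, \zz_I)$ (equivalently $w(i+1, I)$) in the recursion coefficients of Proposition \ref{prop:Wrec} combines correctly with the $(-1)^k$, the $e_k$ normalization, and the argument reversal $\la \mapsto -\la - (n-2k)y$ in $\Wt^-$ so that it exactly reproduces $R^{(i,i+1)}(\la - y\sum_{k=i+2}^n h^{(k)}, z_i - z_{i+1}, y)$ acting on the weight subspace containing $v_I$. Concretely, the number of tensor factors $v_1$ among positions $i+2,\dots,n$ in $v_I$ must be reconciled against $w(i,I)$; the identity one needs is a linear relation among $w(i,I)$, $w(i+1,I)$, and $\sum_{k\geq i+2}h^{(k)}(v_I)$, which is a direct count but must be checked. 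Once this normalization is pinned down, the three-case verification is a finite matching of rational-function coefficients, exactly as in the analogous Proposition 2.15 of \cite{GRTV} and Theorem 8.2-type recursions of \cite{RTV2}; I would cite those for the structural parallel while carrying out the dynamical modification explicitly. Alternatively, and perhaps more cleanly, one can deduce the statement from the already-established coincidence of geometric and dynamical R-matrices together with the orthogonality of Corollary \ref{cor:ortho}: pair both sides against the dual basis $\{\kappa_{\sigma,J}\}$ and use that $\ts_i$ is, up to the identifications, the operator $\Stab_{\sigma'}^{-1}\circ\Stab_\sigma$ for an appropriate pair $\sigma,\sigma'$ related by $s_i$, so the recursion for $\xi_I$ becomes the transpose (with respect to the scalar product) of the recursion $\kappa_{\sigma s_i, *}$ expressed via $\R_{\sigma s_i, \sigma}$.
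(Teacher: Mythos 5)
Your primary route---deducing the identity from the $z$-recursion of Proposition \ref{prop:Wrec} applied to the functions $W_{s_0,K}$ inside $\Wt^-_K$, with the $\la$-reversal and the dynamical shift tracked explicitly---is exactly the paper's proof, which simply invokes that recursion and cites the analogous statements in \cite{GRTV} and \cite{RTV2}. One small correction to your bookkeeping: after applying $K^{(i,i+1)}$ and $P^{(i,i+1)}$ the case analysis, and the reconciliation of $w(\cdot,\cdot)$ with $\sum_{k\ge i+2}h^{(k)}$, must be organized by the summation index $J$ labelling $v_J$ (the subscript of $\Wt^-_J$), not by the substitution index $I$, which only enters through the symmetric substitution $\tt\mapsto\zz_I$ and the prefactor $1/Q_I$.
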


\begin{proof}
The weight functions satisfy the recursion in Proposition \ref{prop:Wrec}. The $\xi_I$ vectors are defined in terms of the weight functions, and hence they also satisfy the appropriate recursion, cf. \cite[Propositions 2.14]{GRTV},   \cite[Theorem 8.2]{RTV2}.
\end{proof}

Proposition  \ref{thm:xi_recursion} together with the explicit formula for $\xi_{\Imin}$ in Proposition \ref{thm:xiI} could serve as an alternative definition of the $\xi_I$ vectors.

\subsection{$\ts_i$ invariant vectors: components in the $\xi_I$ basis}

\begin{proposition}
\label{prop inv vectors}
The vector-valued function $\zeta=\sum_{I\in \Ik} f_I(\lambda,\zz,y) \xi_I$ is invariant under the $S_n$-action generated by the $\ts_i$ operators if and only if
\[
f_{\sigma(I)}(\lambda,\zz,y) = f_I(\lambda,\zz_{\sigma},y)
\]
for all $\sigma\in S_n$ and $I\in \Ik$.
\end{proposition}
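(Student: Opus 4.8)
The plan is to exploit the recursion $\xi_{s_i(I)} = \ts_i \xi_I$ from Proposition~\ref{thm:xi_recursion} together with the fact that the $\ts_i$ operators generate an $S_n$-action whose ``scalar part'' is the permutation action on the $z$-variables (the last display of the Proposition preceding Proposition~\ref{thm:xi_recursion}, i.e. $\ts_i z_j = z_{s_i(j)} \ts_i$). Since $\{\xi_I\}_{I \in \Ik}$ is a basis of $(\C^2)^{\ox n}\ox\L_{\la,\zz,y}$ over $\L_{\la,\zz,y}$ by Corollary~\ref{cor basis}, the coefficients $f_I$ in $\zeta = \sum_I f_I(\la,\zz,y)\,\xi_I$ are uniquely determined, so it suffices to check invariance under each generator $\ts_i$ and translate that into a condition on the $f_I$.

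First I would compute $\ts_i \zeta$. Because $\ts_i$ is a composition of an R-matrix factor, $P^{(i,i+1)}$, and $K^{(i,i+1)}$, and because the R-matrix and $P$ act only on the $(\C^2)^{\ox n}$ factor while $K^{(i,i+1)}$ acts only on the scalar functions, I would move $\ts_i$ past the scalar $f_I$ using the commutation relation $\ts_i z_j = z_{s_i(j)}\ts_i$: this gives $\ts_i\bigl(f_I(\la,\zz,y)\,\xi_I\bigr) = f_I(\la,\zz_{s_i},y)\,\ts_i\xi_I = f_I(\la,\zz_{s_i},y)\,\xi_{s_i(I)}$, where $\zz_{s_i}$ denotes the tuple with $z_i$ and $z_{i+1}$ swapped. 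Summing over $I$ and reindexing the sum by $J = s_i(I)$, I obtain
\[
\ts_i\zeta = \sum_{J\in\Ik} f_{s_i(J)}(\la,\zz_{s_i},y)\,\xi_J .
\]
Comparing with $\zeta = \sum_J f_J(\la,\zz,y)\,\xi_J$ and using linear independence of the $\xi_J$, invariance under $\ts_i$ is equivalent to $f_{s_i(J)}(\la,\zz_{s_i},y) = f_J(\la,\zz,y)$ for all $J$; replacing $\zz$ by $\zz_{s_i}$ (an involution) this reads $f_{s_i(J)}(\la,\zz,y) = f_J(\la,\zz_{s_i},y)$, which is precisely the asserted relation for the transposition $\sigma = s_i$.

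To finish, I would note that the transpositions $s_1,\dots,s_{n-1}$ generate $S_n$, and that the family of conditions ``$f_{\sigma(I)}(\la,\zz,y) = f_I(\la,\zz_\sigma,y)$ for all $I$'' is stable under composition: if it holds for $\sigma$ and for $\tau$, then $f_{\sigma\tau(I)}(\la,\zz,y) = f_{\tau(I)}(\la,\zz_\sigma,y) = f_I((\zz_\sigma)_\tau,\ldots) = f_I(\la,\zz_{\sigma\tau},y)$, using that $(\zz_\sigma)_\tau = \zz_{\sigma\tau}$ under the convention $\zz_\sigma = (z_{\sigma^{-1}(1)},\dots,z_{\sigma^{-1}(n)})$. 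Hence invariance under all $\ts_i$ (equivalently, under the whole $S_n$-action) is equivalent to the stated condition for all $\sigma\in S_n$. The only genuinely delicate point is bookkeeping: keeping the convention for $\zz_\sigma$ consistent with the direction in which $\ts_i$ permutes variables, and making sure the reindexing $J = s_i(I)$ together with the substitution $\zz\mapsto\zz_{s_i}$ produces the relation in the exact form claimed rather than its inverse; once the conventions are pinned down this is routine.
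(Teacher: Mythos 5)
Your proposal is correct and follows essentially the same route as the paper, which simply states that the result follows from Proposition~\ref{thm:xi_recursion} ($\xi_{s_i(I)}=\ts_i\xi_I$) together with the commutation relations $\ts_i z_j = z_{s_i(j)}\ts_i$; you have merely written out the coefficient comparison in the $\xi$-basis and the extension from the generators $s_i$ to all of $S_n$, both of which are the implicit content of the paper's one-line proof.
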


\begin{proof}
The proposition follows from Proposition \ref{thm:xi_recursion}.
\end{proof}

This proposition is a dynamical analog of    \cite[Proposition 9.3]{RTV2}.

\subsection{$\ts_i$ invariant vectors: components in the $v_I$ basis}

\begin{definition}
For a function in $\mu,\zz,y$ define
\[
\hat{s}_{i,\mu}(f)=
\frac{(\mu+z_{i+1}-z_i)y}{(\mu-y)(z_{i+1}-z_i)} f +
\frac{\mu(z_{i+1}-z_i-y)}{(\mu-y)(z_{i+1}-z_i)} K^{(i,i+1)}f
\]
\[
=\frac{\mu+y}{\mu-y}f + \frac{\mu(z_{i+1}-z_i-y)}{\mu-y} \partial_i f,\ \ \ \ \ \ \ \ \ \ \ \ \ \
\]
where $\partial_i f=(f-K^{(i,i+1)}f)/(z_i-z_{i+1})$ is the standard divided difference operator.
\end{definition}

\begin{remark}
Calculation shows that the operator $\hat{s}=\hat{s}_{i,\mu}$ satisfies the identity
\[
(\hat{s}+1)\left(\hat{s}-\frac{\mu+y}{\mu-y}\right)=0.
\]
\end{remark}

\begin{lemma}
\label{prop inv vectors}
The vector-valued function $\zeta=\sum_{I\in \Ik} f_I(\lambda,\zz,y)v_I$ is invariant under the operator $\ts_j$ if and only if
\begin{itemize}
\item $f_I=K_j f_I$ for $j,j+1\in I$ or $j,j+1\not\in I$;
\item $f_{s_j(I)}=\hat{s}_{j,\la-y\nu} f_I$ for $j\not\in I, j+1\in I$ and $\sum_{k=j+2}^n h^{(k)}v_I=\nu v_I$;
\item $f_{s_j(I)}=\hat{s}^{-1}_{j,\la-y\nu} f_I$ for $j\in I, j+1\not\in I$ and $\sum_{k=j+2}^n h^{(k)}v_I=\nu v_I$.
\end{itemize}
\end{lemma}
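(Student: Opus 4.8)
The plan is to translate the single-operator invariance condition $\ts_j\zeta = \zeta$ into equations on the coefficient functions $f_I$ directly from the definition of $\ts_j$. Recall $\ts_j = R^{(j,j+1)}(\la - y\sum_{k=j+2}^n h^{(k)}, z_j - z_{j+1})\circ P^{(j,j+1)}\circ K^{(j,j+1)}$. First I would apply the three operators in sequence to the generic sum $\zeta = \sum_I f_I v_I$. The operator $K^{(j,j+1)}$ sends $f_I \mapsto K_j f_I$ while fixing the $v_I$; then $P^{(j,j+1)}$ sends $v_I \mapsto v_{s_j(I)}$; then the dynamical R-matrix acts in the $j,j+1$ tensor slots. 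Since $R(\la,z,y)$ is block-diagonal with respect to the weight decomposition of $\C^2\ox\C^2$, the analysis splits into the three cases of the statement according to the content of $\{j,j+1\}$ relative to $I$.

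For the first case, $j,j+1\in I$ or $j,j+1\notin I$: here the relevant slots of $v_I$ are $v_1\ox v_1$ or $v_2\ox v_2$, which are eigenvectors of $R$ with eigenvalue $1$, and $P^{(j,j+1)}$ fixes such $v_I$ and $s_j(I)=I$. So $\ts_j v_I = v_I$ and applying $\ts_j$ to the $f_I v_I$ term yields $(K_j f_I)v_I$; invariance in this block forces $f_I = K_j f_I$. For the two mixed cases, one must pair the $v_I$ and $v_{s_j(I)}$ terms: on the span of $v_1\ox v_2$ and $v_2\ox v_1$ in slots $j,j+1$, the matrix $R(\la',z_j-z_{j+1},y)$ with $\la'=\la - y\nu$ acts by the $2\times 2$ middle block, where $\nu$ is the eigenvalue of $\sum_{k=j+2}^n h^{(k)}$ on the relevant vectors (note this eigenvalue is the same for $v_I$ and $v_{s_j(I)}$ when $j\notin I, j+1\in I$ or vice versa, since $h^{(k)}$ for $k\ge j+2$ is unchanged by $s_j$). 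Writing out the two-dimensional linear system $\ts_j(f_I v_I + f_{s_j(I)}v_{s_j(I)}) = f_I v_I + f_{s_j(I)}v_{s_j(I)}$ and comparing with the explicit entries of the $R$-matrix in \Ref{R-m}, one reads off precisely $f_{s_j(I)} = \hat s_{j,\la-y\nu}f_I$ in the case $j\notin I, j+1\in I$; the remaining case $j\in I, j+1\notin I$ follows by applying $\ts_j$ again (using $\ts_j^2=1$) or equivalently by inverting, which gives $f_{s_j(I)} = \hat s^{-1}_{j,\la-y\nu}f_I$ — this is consistent with the quadratic relation $(\hat s + 1)(\hat s - \tfrac{\mu+y}{\mu-y})=0$ recorded in the Remark, so $\hat s^{-1}$ makes sense on the relevant functions. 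One should double-check the sign/normalization matches the identification $\hat s_{i,\mu}(f) = \tfrac{(\mu+z_{i+1}-z_i)y}{(\mu-y)(z_{i+1}-z_i)}f + \tfrac{\mu(z_{i+1}-z_i-y)}{(\mu-y)(z_{i+1}-z_i)}K^{(i,i+1)}f$, which is exactly the $2\times2$ block of $R$ composed with the transposition $P$ and the variable swap $K$.

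The main obstacle I anticipate is bookkeeping the dynamical shift: making sure the argument $\la - y\sum_{k=j+2}^n h^{(k)}$ of the R-matrix is correctly evaluated to the scalar $\la - y\nu$ on both $v_I$ and $v_{s_j(I)}$, and that $\nu$ is well-defined (i.e. the relevant vectors are genuinely $\sum_{k=j+2}^n h^{(k)}$-eigenvectors with a common eigenvalue). This is routine once one observes that $s_j$ only permutes slots $j$ and $j+1$, leaving $h^{(k)}$ for $k\ge j+2$ untouched, so the eigenvalue $\nu$ depends only on $I\cap\{j+2,\dots,n\}$, which is the same set as $s_j(I)\cap\{j+2,\dots,n\}$. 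After that, each case is an elementary comparison of two vectors in $(\C^2)^{\ox n}$, coefficient by coefficient; no integration or hard combinatorics is involved. The whole argument is a direct unwinding of definitions, so I would present it compactly, doing the $2\times2$ block computation once and invoking it in both mixed cases.
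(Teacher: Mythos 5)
Your approach is correct and is, in substance, the proof the paper intends: the paper's one\-/line proof defers to the defining formula $\ts_j=R^{(j,j+1)}(\la-y\sum_{k\ge j+2}h^{(k)},z_j-z_{j+1})\circ P^{(j,j+1)}\circ K^{(j,j+1)}$ (via Proposition \ref{thm:xi_recursion} and the analogue in \cite[Lemma 9.2]{RTV2}), and what you describe --- apply $K$, then $P$, then use the block-diagonality of $R$ in \Ref{R-m} to split into the three cases, noting that $\nu$ is well defined because $I$ and $s_j(I)$ agree on $\{j+2,\dots,n\}$ --- is exactly that computation. The unmixed case and the structure of the two mixed cases are handled correctly, and reducing the third bullet to the second via $\ts_j^2=1$ is fine.

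The one step you have not actually performed is the one where all the content sits: verifying that the $2\times2$ middle block of $R(\mu,z_j-z_{j+1},y)$, conjugated by $P^{(j,j+1)}\circ K^{(j,j+1)}$, produces precisely the printed operator $\hat s_{j,\mu}$. You assert this ``is exactly the $2\times2$ block of $R$ composed with $P$ and $K$,'' but it does not come out on the nose with the conventions as printed, so you must do the computation. Concretely, solving the $v_I$-component of $\ts_j\zeta=\zeta$ in the case $j\notin I$, $j+1\in I$ gives $f_{s_j(I)}=\frac{(\mu+z_j-z_{j+1})y}{(\mu-y)(z_{j+1}-z_j)}f_I+\frac{\mu(z_{j+1}-z_j-y)}{(\mu-y)(z_{j+1}-z_j)}K_jf_I$ with $\mu=\la-y\nu$, i.e.\ the first numerator carries $z_j-z_{j+1}$ rather than the $z_{j+1}-z_j$ appearing in the stated definition of $\hat s_{i,\mu}$. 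A quick test with $n=2$: $\zeta=\xi_{\{1\}}+\xi_{\{2\}}$ is $\ts_1$-invariant (by Proposition \ref{thm:xi_recursion}) and has $f_{\{1\}}=\la-\frac{(\la+z_1-z_2)y}{z_1-z_2-y}$, $f_{\{2\}}=\frac{(\la-y)(z_1-z_2)}{z_1-z_2-y}$, whereas the printed $\hat s_{1,\la}f_{\{2\}}$ equals $\la-\frac{(\la-z_1+z_2)y}{z_1-z_2-y}\ne f_{\{1\}}$. So either the definition of $\hat s_{i,\mu}$ carries a sign typo or your identification needs adjusting; in either case the $2\times2$ computation must be written out explicitly rather than taken for granted, since that is the entire substance of the lemma.
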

\begin{proof}
The statement follows from Proposition \ref{thm:xi_recursion}, cf.  \cite[Lemma 9.2]{RTV2}.
\end{proof}

\section{Inverse of the $\Stab_{\id}$ map}
\label{sec:inv}

Define the homomorphism of $\L_{\lambda,\zz,y}$-modules
\bean
\label{nu}
\nu: H^*_T(\XX_n) \ox \L_{\lambda,\zz,y} \to (\C^2)^{\ox n} \ox \L_{\lambda,\zz,y}
\eean
 by
\[
[f(\lambda,\Gamma,\zz,y)] \mapsto \sum_{I\in \Ik}
   \frac{f(\lambda,\zz_I,\zz,y)}{R_I(\zz)} \xi_I, \qquad \text{for}\quad
[f(\lambda,\Gamma,\zz,y)]\in H^*_T(\TGrkn)\ox\L_{\lambda,\zz,y}.
\]

\begin{theorem}
The homomorphisms $\Stab_{\id}$  and $\nu$ are inverse to each other.
\end{theorem}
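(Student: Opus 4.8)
The plan is to show that both composites $\nu\circ\Stab_{\id}$ and $\Stab_{\id}\circ\nu$ are the identity, and since both maps are isomorphisms of free $\L_{\lambda,\zz,y}$-modules (as noted after the definition of $\Stab_\sigma$ and in Corollary \ref{cor basis}), it suffices to check just one composite, say $\nu\circ\Stab_{\id}=\id$ on the basis $\{1_I\}_{I\in\Ik}$, i.e. on the basis $\{v_I\}$. Concretely, $\Stab_{\id}(v_I)=\kappa_{\id,I}=[\Wt^+_I]$, whose restriction to the fixed point $x_J$ is $\Wt^+_I(\lambda,\zz_J,\zz,y)$ by Lemma \ref{thm:kappaW}. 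Applying $\nu$ to $[\Wt^+_I]$ gives
\[
\nu\bigl(\Stab_{\id}(v_I)\bigr)
= \sum_{J\in\Ik} \frac{\Wt^+_I(\lambda,\zz_J,\zz,y)}{R_J(\zz)}\,\xi_J.
\]
So the goal reduces to the identity $\sum_{J} \Wt^+_I(\lambda,\zz_J,\zz,y)\,\xi_J/R_J(\zz) = v_I$ in $(\C^2)^{\ox n}\ox\L_{\la,\zz,y}$.

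The main step is then to expand $\xi_J$ using its definition \eqref{XI}, namely $\xi_J=\frac1{Q_J(\zz,y)}\sum_{L\in\Ik}\Wt^-_L(\lambda,\zz_J,\zz,y)\,v_L$, and interchange the order of summation. The coefficient of $v_L$ in $\nu(\Stab_{\id}(v_I))$ becomes
\[
\sum_{J\in\Ik}\frac{\Wt^+_I(\lambda,\zz_J,\zz,y)\,\Wt^-_L(\lambda,\zz_J,\zz,y)}{R_J(\zz)\,Q_J(\zz,y)}
= (\Wt^+_I,\Wt^-_L),
\]
which is precisely the scalar product introduced before Corollary \ref{cor:ortho}. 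By the Orthogonality of weight functions II (Corollary \ref{cor:ortho}), this equals $\delta_{I,L}$. Hence the coefficient of $v_L$ is $\delta_{I,L}$, so $\nu(\Stab_{\id}(v_I))=v_I$, which is exactly what we wanted.

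The only genuinely substantive input is the orthogonality relation (Corollary \ref{cor:ortho}, equivalently Proposition \ref{thm:ortho}), and that has already been established; the rest is bookkeeping — verifying that $\nu$ is well-defined on the relevant completed/localized module and that $\Stab_{\id}$ lands in the domain of $\nu$ (both are $\L_{\la,\zz,y}$-linear maps between the same free modules, so this is immediate). I do not expect any real obstacle: the identification of the double sum with the scalar product $(\Wt^+_I,\Wt^-_L)$ is the one place to be slightly careful with signs and with the distinction between $\Wt^-_K$ and $W_{s_0,K}$, but this is already encoded in the definitions in Section \ref{sec:Wversions}. Since both $\Stab_{\id}$ and $\nu$ are isomorphisms and $\nu\circ\Stab_{\id}=\id$, it follows formally that $\Stab_{\id}\circ\nu=\id$ as well, completing the proof.
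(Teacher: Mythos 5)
Your proposal is correct and follows essentially the same route as the paper: compute $\nu(\Stab_{\id}(v_I))$, expand each $\xi_J$ via its definition, interchange the sums, and recognize the inner sum as the scalar product $(\Wt^+_I,\Wt^-_L)=\delta_{I,L}$ from Corollary \ref{cor:ortho}. Your additional remark that one composite suffices because $\Stab_{\id}$ is already known to be an isomorphism of free modules is a reasonable (and implicit in the paper) piece of bookkeeping.
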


\begin{proof}
For $K\in \Ik$ we have $\Stab_{\id}(v_K)=\kappa_{\id,K}$ which is equal to $[\Wt^+_K(\lambda,\Gamma,\zz,y)]$.
Then
\[
\nu(\Stab_{\id}(v_K)) = \sum_{I\in \Ik} \frac{ \Wt^+_K(\la,\zz_I,\zz,y) }{R_I(\zz)}\xi_I =
\sum_{I\in \Ik}
\frac{ \Wt^+_K(\la,\zz_I,\zz,y) \sum_{J \in \Ik} \Wt^-_J(\la,\zz_I,\zz,y) v_J}
{R_I(\zz)Q_I(\zz,y)}=
\]
\[
\sum_{J\in \Ik} v_J \left( \sum_{I\in \Ik}
\frac{\Wt^+_K(\la,\zz_I,\zz,y)\Wt^-_J(\la,\zz_I,\zz,y)}{R_I(\zz)Q_I(\zz,y)} \right) = v_K,
\]
where the last equality holds because of the orthogonality Corollary  \ref{cor:ortho}.  Cf. \cite[Lemma 6.7]{RTV1} and
Theorem \cite[Lemma 8.5]{RTV2}.
\end{proof}

\begin{corollary}
We have
\[
\Stab_{\id} (\xi_I) = \left[\prod_{i=1}^k \prod_{j\in \Ibar} (\gamma_{1,i}-z_j)\right].
\]
\end{corollary}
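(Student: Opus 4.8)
The plan is to verify the claimed identity by computing the restriction of both sides to every torus fixed point $x_J$ and invoking the injectivity of the equivariant localization map $\Loc$ of Section~\ref{sec:loc}. The right-hand side $\left[\prod_{i=1}^k \prod_{j\in\Ibar}(\gamma_{1,i}-z_j)\right]$ restricts at $x_J$ to $\prod_{a\in J}\prod_{j\in\Ibar}(z_a-z_j)$, which is precisely $R_J(\zz)$ up to the obvious sign bookkeeping (note $\Ibar=[n]-I$ and the sum over $I\in\Ik$, so this is $R_J(\zz)=\prod_{a\in J}\prod_{b\in\bar J}(z_a-z_b)$). So the goal reduces to showing that $\bigl(\Stab_{\id}(\xi_I)\bigr)\big|_{x_J} = R_J(\zz)\,\delta_{I,J}$, i.e. that $\Stab_{\id}(\xi_I) = [\,\prod_i\prod_{j\in\Ibar}(\gamma_{1,i}-z_j)\,]$ is supported diagonally on fixed points with the prescribed value.

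First I would expand $\Stab_{\id}(\xi_I)$ using the definition $\xi_I = \frac{1}{Q_I(\zz,y)}\sum_{J\in\Ik}\Wt^-_J(\la,\zz_I,\zz,y)\,v_J$ together with $\Stab_{\id}(v_J) = \kappa_{\id,J} = [\Wt^+_J]$. This gives
\[
\Stab_{\id}(\xi_I) = \frac{1}{Q_I(\zz,y)}\sum_{J\in\Ik}\Wt^-_J(\la,\zz_I,\zz,y)\,[\Wt^+_J].
\]
Restricting to $x_L$ and using $\kappa_{\id,J}|_{x_L}=\Wt^+_J(\la,\zz_L,\zz,y)$ from Lemma~\ref{thm:kappaW}, the restriction becomes
\[
\frac{1}{Q_I(\zz,y)}\sum_{J\in\Ik}\Wt^-_J(\la,\zz_I,\zz,y)\,\Wt^+_J(\la,\zz_L,\zz,y).
\]
The key input is then a ``dual'' orthogonality: the sum $\sum_{J\in\Ik}\Wt^-_J(\la,\zz_I,\zz,y)\,\Wt^+_J(\la,\zz_L,\zz,y)$ should equal $R_I(\zz)Q_I(\zz,y)\,\delta_{I,L}$. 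This is the companion of Proposition~\ref{thm:ortho}/Corollary~\ref{cor:ortho}, with the roles of the index set $\Ik$ and the "fixed-point" labels swapped; it follows from the same orthogonality relation (the weight-function orthogonality is a genuine duality between the two bases $\{\Wt^+_J\}$ and $\{\Wt^-_K\}$, so it can be summed in either argument). Granting this, the restriction at $x_L$ is $\frac{1}{Q_I}\cdot R_I Q_I\,\delta_{I,L} = R_I(\zz)\,\delta_{I,L}$, matching the restriction of the right-hand side, and injectivity of $\Loc$ finishes the proof.

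The main obstacle is establishing the dual orthogonality sum $\sum_{J}\Wt^-_J(\la,\zz_I,\zz,y)\Wt^+_J(\la,\zz_L,\zz,y) = R_I Q_I\,\delta_{I,L}$ cleanly. One route is purely formal: Corollary~\ref{cor:ortho} says the matrices $A_{J,I}:=\Wt^+_J(\la,\zz_I,\zz,y)/R_I Q_I$ and $B_{I,K}:=\Wt^-_K(\la,\zz_I,\zz,y)$ satisfy $\sum_I A_{J,I}B_{I,K}=\delta_{J,K}$ (this is exactly the scalar-product identity $(\Wt^+_J,\Wt^-_K)=\delta_{J,K}$), hence $B$ is a left inverse of $A$; since both are square matrices over the field $\L_{\la,(\zz,y)}$, $B$ is also a right inverse, giving $\sum_J B_{I,J}A_{J,L}=\delta_{I,L}$, which unwinds to precisely the needed identity. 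Alternatively, one could cite the previous theorem ($\nu$ and $\Stab_{\id}$ are mutually inverse): since $\nu([f])=\sum_I \frac{f(\la,\zz_I,\zz,y)}{R_I}\xi_I$, applying $\Stab_{\id}$ to $\xi_I$ and pairing with $\nu$ again shows $\nu(\Stab_{\id}(\xi_I))=\xi_I$, which already pins down $\Stab_{\id}(\xi_I)$ uniquely as the class whose $\nu$-image is $\xi_I$; one then checks directly that $[\,\prod_i\prod_{j\in\Ibar}(\gamma_{1,i}-z_j)\,]$ has $\nu$-image $\xi_I$, using $\prod_i\prod_{j\in\Ibar}(\gamma_{1,i}-z_j)\big|_{x_L}=R_L(\zz)\,\delta_{I,L}$ so that $\nu$ of it is $\sum_L \frac{R_L\delta_{I,L}}{R_L}\xi_L=\xi_I$. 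This second route is shorter and I would present it as the main argument, relegating the first to a remark.
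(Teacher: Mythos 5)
Your proposal is correct, and the route you choose to present as the main argument (compute $\nu$ of the class $\bigl[\prod_i\prod_{j\in\Ibar}(\gamma_{1,i}-z_j)\bigr]$, observe its fixed-point restrictions are $R_I(\zz)\,\delta_{I,L}$ so that $\nu$ sends it to $\xi_I$, and invoke the theorem that $\nu$ and $\Stab_{\id}$ are mutually inverse) is exactly the derivation the paper intends, which is why it states the result as a corollary of that theorem without further proof. The first route via dual orthogonality is also valid but amounts to re-proving the inverse theorem.
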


\section{Rational dynamical quantum group $\es$}
\label{sec dyn g}

Definitions and formulas below for the rational dynamical quantum group $\es$ are
semi-classical   $\tau\to i\infty$ limits
of the analogous definitions and formulas 
for the elliptic quantum group $E_{\tau,\,y/2}(\slt)$, see \cite{FV1}-\cite{FV3}.

\subsection{Preliminaries}
\label{Prel}
Recall that  the space $(\C^2)^{\ox n}$ has basis of vectors $v_I$, where $I\subset [n]$ and
\[
\label{v_I}
v_I=v_{i_1}\otimes \ldots \otimes v_{i_n},
\]
with $i_j=1$ if $j\in I$ and $i_j=2$ if $j\in [n]-I$. Denote by $(\C^2)^{\ox n}_k$ the span of $\{v_I \ |\ |I|=k\}$.
We have  $(\C^2)^{\ox n}=\oplus_{k=0}^n(\C^2)^{\ox n}_k$.

\smallskip
Let $\h$ be  commutative one-dimensional Lie algebra with generator $h$.  Define the $\h$-action on $(\C^2)^{\ox n}$ by setting $h=2k-n$
on $(\C^2)^{\ox n}_k$.

\medskip

Now consider $h$ as a variable. Denote by $\L_{\la, yh, y}$ the algebra of rational functions of the form $f/g$, where $f$ is a polynomial
in  $\la, yh, y$ and $g$ is a finite product of factors of the form $\la + kyh + ly$ where $k,l\in\Z$.

\medskip

Recall the dynamical R-matrix $R(\la,w,y)\in\End(\C^2\ox\C^2)$ in \Ref{R-m}.    
 Notice that each entry $R_{ij,kl}(\la,w,y)$ of the matrix $R(\la,w,y)$  has the
expansion
\bea
R_{ij,kl}(\la,w,y) = \sum_{s=0}^\infty R_{ij,kl;s}(\la,y) w^{-s}
\eea
with $R_{ij,kl;s}(\la,,y) \in \L_{\la,y}$.

\subsection{Definition}
\label{Definition}

The {\it rational dynamical quantum group} $\es$ is the unital  algebra  with generators of two types.
The generators of the first type are  functions $f(\la, yh, y)\in \L_{\la,yh,y}$.  The generators of the second type are
elements $L_{ij,s}(\la,y)$, \ $ i,j=1,2$,\ $ s\in{\Z}_{>0}$.

Introduce the generating series
\bea
L_{ij}(\la,w,y) = \sum_{s=0}^\infty L_{ij,s}(\la,y)w^{-s},   \qquad  i,j=1,2,
\eea
and consider them as entries of the $2\times 2$-matrix $L(\la,w,y)=(L_{ij}(\la,w,y))$  called the {\it L-operator}.

Relations in $\es$  involving $f(\la, yh, y)$ are
\begin{eqnarray*}
f(\la, yh, y)g(\la, yh, y) &=& (fg)(\la, yh, y),
\\
f(\la, yh, y)L_{11}(\la,w,y) &=& L_{11}(\la,w,y)f(\la, yh, y), \\
f(\la, yh, y)L_{22}(\la,w,y) &=& L_{22}(\la,w,y)f(\la, yh, y),
\\f(\la, yh, y)L_{12}(\la,w,y) &=& L_{12}(\la,w,y)f(\la, yh-2y, y),
\\
f(\la, yh, y)L_{21}(\la,w,y) &=& L_{21}(\la,w,y)f(\la, yh+2y, y).
\end{eqnarray*}
The relations between the generators of the second type are given by the formula
\bean
\label{es rel}
&&
R^{(12)}(\lambda-y h,w_{12},y)L^{(1)}(\lambda,w_{1},y)L^{(2)}(\lambda-y h^{(1)},w_{2},y) =
\\
&&
\phantom{aaaaaaaaaaaaaaa} = L^{(2)}(\lambda,w_{2},y)L^{(1)}(\lambda-y h^{(2)},w_{1},y)R^{(12)}(\lambda,w_{12},y).
\notag
\eean
Here $h$ is considered as the generator of the  one-dimensional commutative Lie algebra $\h$, see details in \cite{FV1}.
The matrix relation  \Ref{es rel} gives 16 scalar relations.
Here are two of them:
\begin{eqnarray*}
&&
L_{11}(\la,w_1,y)L_{11}(\la-y,w_2,y)
=L_{11}(\la,w_2,y)L_{11}(\la-y,w_1,y),
\\
&&
L_{22}(\la,w_1,y)L_{22}(\la+y,w_2,y)
=L_{22}(\la,w_2,y)L_{22}(\la+y,w_1,y),
\end{eqnarray*}
the other fourteen relations are written down explicitly in \cite[Section 2]{FV1}.

\subsection{$\es$-modules} We define on
$\CCn\ox \L_{\la,(\zz,y)}$ the following $\es$-module structures labeled by elements $\si\in S_n$.

The $\h$-module structure on $\CCn\ox \L_{\la,(\zz,y)}$ does not depend on $\si$ and is defined in Section \ref{Prel}. This
$\h$-module structure induces the action on  $\CCn\ox \L_{\la,(\zz,y)}$ of generators $f(\la, yh,y)\in \L_{\la,hy,y}$.

The $\es$-module structure corresponding to $\si$ has the L-operator
\bea
&&
L(\la,w,y) = R^{(0,1)}(\la-y\sum_{j=2}^{n} h^{(j)}, w-z_{\si (1)},y)R^{(0,2)}(\la-y\sum_{j=3}^{n} h^{(j)}, w-z_{\si (2)},y)\dots
\\
&&
\phantom{aaaaaaaaaaa}
\dots
R^{(0,n-1)}(\la-y h^{(n)}, w-z_{\si(n-1)},y)R^{(0,n)}(\la, w-z_{\si (n)},y).
\eea
We think of $L(\la,w,y)$) as an $2\times2$-matrix with $\End(\CCn)$-valued entires $L_{ij}(\la,w,y)$ depending on
$\la,w,\zz,y$. Expand $L_{ij}(\la,w,y)$ into Laurent series in $w$ at $w=\infty$,
\bean
\label{Lau L_{ij}}
L_{ij}(\la,w,y) = \sum_{s=0}^\infty L_{ij,s}(\la,y) w^{-s}.
\eean
Then $L_{ij,s}(\la,y)\in\End(\CCn)\ox \C[\zz,y]\ox \L_{\la,y}$.

The operators $\{f(\la,yh,y), \ L_{ij,s}(\la,y)\}$ define on $\CCn\ox \L_{\la,(\zz,y)}$ an $\es$-module structure, see
\cite{F1, F2, FV1}.

The space $\CCn\ox\L_{\la,(\zz,y)}$ with the $\es$-module structure corresponding to $\si$
 will be denoted by  $V(z_{\si (1)})\ox\dots\ox V(z_{\si (n)})$ or by $V_\si$ and called the {\it tensor product of evaluation modules}.
Denote
\bea
(V(z_{\si (1)})\ox\dots\ox V(z_{\si (n)}))_k = (\C^2)^{\ox n}_k \ox\L_{\la,(\zz,y)}.
\eea

\begin{example}
The $\es$-action on  $V(z_1)$ is given by the formulas
\begin{align*}
& f(\la,yh,y)v_1= f(\la,y,y)v_1,
&  &
f(\la,yh,y)v_2 = f(\la,-y,y)v_2,
\\
&L_{11}(\la,w,y)v_1 = v_1,
& &L_{11}(\la,w,y)v_2 =  \frac{(\la+ y)(w-z_1)}{\la( w-z_1-y)}v_2,
\\
&L_{12}(\la,w,y)v_1 =  -\frac{ (\la+w-z_1)y}{ \la(w-z_1-y)}v_2,
&&
L_{12}(\la,w,y)v_2 = 0,
\\
&  L_{21}(\la,w,y)v_1 = 0,
&&
L_{21}(\la,w,y)v_2 =  -\frac{ (\la-w+z_1)y}{ \la(w-z_1-y)} v_1,
\\
&L_{22}(\la,w,y)v_1 =  \frac{(\la-y)(w-z_1)}{ \la(w-z_1-y)} v_1,
&&
L_{22}(\la,w,y)v_2 = v_2.
\end{align*}

\end{example}

\begin{example} For example, on  $V(z_1)\ox V(z_2)$  we have
\bea
\label{L_{22} on 11}
L_{22}(\la,w,y) v_1\ox v_1 = \frac{(\la-y)(\la-2y)(w-z_1)(w-z_2)}{\la(\la-y)(w-z_1-y)(w-z_2-y)}v_1\ox v_1,
\eea
\bea
\label{L_{22} on 12}
L_{22}(\la,w,y) v_1\ox v_2 = \frac{\la(w-z_1)}{(\la+y)(w-z_1-y)}v_1\ox v_2,
\eea
\bea
\label{L_{22} on 21}
L_{22}(\la,w,y) v_2\ox v_1 &=& \frac{(\la+y-w+z_1)y}{(\la+y)(w-z_1-y)}
\frac{(\la+w - z_2)y}{\la(w-z_2-y)}
 v_1\ox v_2 +
\\
&+&
 \frac{(\la- y)(w-z_2)}{\la(w-z_2-y)}
v_2\ox v_1.
\notag
\eea
\bea
\label{L_{22} on 22}
L_{22}(\la,w,y) v_2\ox v_2 = v_2\ox v_2.
\eea

\end{example}

\subsection{Operator algebra}

For an $\es$-module $V_\si$, $\si\in S_n$, we  define the {\it operator algebra} $A_\si$ as the unital $\C$-algebra
of the following (difference  in $\la$) operators,
acting on $V_\si$, with generators
$f(\la, yh, y)\in \C(\la; yh, y)$ and $\tilde L_{11}(w,y), \tilde L_{12}(w,y), \tilde L_{21}(w,y),\tilde L_{22}(w,y)$. For \\
$\zeta \in \V$,
we set
\bea
(f(\la, yh, y)\zeta)(\la,\zz,y) &=& f(\la,yh,y)\zeta(\la,\zz,y),
\\
(\tilde L_{11}(w,y)\zeta)(\la,\zz,y) &=& L_{11}(\la,w,y)\zeta(\la-y,\zz,y),
\\
(\tilde L_{21}(w,y)\zeta)(\la,\zz,y) &=& L_{21}(\la,w,y)\zeta(\la-y, \zz,y),
\\
(\tilde L_{12}(w,y)\zeta)(\la,\zz,y) &=& L_{12}(\la,w)\zeta(\la+y,\zz,y),
\\
(\tilde L_{22}(w,y)\zeta)(\la,\zz,y) &=& L_{22}(\la,w)\zeta(\la+y,\zz,y).
\eea
Relations involving $f(\la, y h,y)$ are
\bea
f(\la, y h,y)g(\la,y h,y) &=& (fg)(\la, yh,y),
\\
f(\la-y,y h,y)\tilde L_{11}(w,y) &=& \tilde L_{11}(w,y)f(\la,y h,y),
\\
f(\la+y,y h,y)\tilde L_{22}(w,y) &=& \tilde L_{22}(w,y)f(\la,y h,y),
\\
f(\la+y,yh+2y,y)\tilde L_{12}(w,y) &=& \tilde L_{12}(w,y)f(\la,y h,y),
\\
f(\la-y,y h-2y,y)\tilde L_{21}(w,y) &=& \tilde L_{21}(w,y)f(\la,y h,y).
\eea
The 16 relations between $\tilde L_{11}(w,y), \tilde L_{12}(w,y), \tilde L_{21}(w,y), \tilde L_{22}(w,y)$ are induced  by \Ref{es rel}. Here are two of them:
\bea
\label{ex tilde L}
&&
\tilde L_{11}(w_1,y)\tilde L_{11}(w_2,y)
=\tilde L_{11}(w_2,y)\tilde L_{11}(w_1,y),
\\
&&
\tilde L_{22}(w_1,y)\tilde  L_{22}(w_2,y)
=\tilde  L_{22}(w_2,y)\tilde  L_{22}(w_1,y),
\notag
\eea
the remaining relations are written down explicitly in \cite[Section 3]{FV1}.

\medskip
Let $\dl : \zeta(\la,\zz, y) \mapsto \zeta(\la+y,\zz,y)$ denote the shift operator. Then
\bea
\notag
&
\tilde L_{11}(w,y) = L_{11}(\la,w,y)\dl^{-1},
&\qquad
\tilde L_{21}(w,y) = L_{21}(\la,w,y)\dl^{-1},
\\
\label{}
&
\tilde L_{12}(w,y) = L_{12}(\la,w,y)\dl,
&\qquad
\tilde L_{22}(w,y) = L_{22}(\la,w,y)\dl.
\eea
Each of these difference operators has the expansion of the form
\bean
\label{exp tilde}
\tilde L_{ij}(w,y) = \sum_{s=0}^\infty \tilde L_{ij,s}(y)w^{-s},
\eean
where
\bea
\tilde L_{ij,s}(y) = L_{ij,s}(\la, y) \dl^{\pm1},
\eea
and the sign is plus if $j=1$ and the sign is minus if $j=2$.

\subsection{Isomorphisms of modules $V_\si$}
\label{sec:morphisms}

\begin{lemma}
\label{thm:Risom}
For any $\si\in S_n$ and $i$, $1\le i<n$, the map
\begin{eqnarray*}
\hat R_{i,i+1} &:&
V(z_{\si(1)})\ox\dots\ox  V(z_{\si(i+1)})\ox V(z_{\si(i)})\ox \dots \ox V(z_{\si(n)})
\\
&&
\to
V(z_{\si(1)})\ox\dots\ox  V(z_{\si(i)})\ox V(z_{\si(i+1)})\ox \dots \ox V(z_{(n)}),
\end{eqnarray*}
where
\[
\hat R_{i,i+1} = R^{(i,i+1)}(\la-y\sum_{k=i+2}^{n} h^{(k)},z_{\si(i)}-z_{\si(i+1)}) P^{(i,i+1)},
\]
commutes with the action of the operator algebra.
\end{lemma}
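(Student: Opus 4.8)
The plan is to verify that the map $\hat R_{i,i+1}$ intertwines each type of generator of the operator algebra: the functions $f(\la,yh,y)$ and the four series $\tilde L_{ab}(w,y)$. Since the operator algebra is generated by these, it suffices to check the commutation on generators, and since the $\tilde L_{ab}(w,y)$ are built from the $L_{ab}(\la,w,y)$ together with the shift operators $\dl^{\pm 1}$, it is enough to prove that $\hat R_{i,i+1}$ commutes (in the appropriate dynamical sense) with $L(\la,w,y)$ and with $\dl$, once we track the $\h$-weight shifts correctly.

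First I would reduce to the case $i=n-1$, or rather to a ``local'' statement at factors $i,i+1$, since the $R$-matrices $R^{(0,j)}$ in the definition of $L(\la,w,y)$ for $j\neq i,i+1$ act on tensor factors disjoint from $\{i,i+1\}$ and commute with $\hat R_{i,i+1}$ up to bookkeeping of the $\h$-weight arguments $\la - y\sum h^{(k)}$. Concretely, write $L(\la,w,y)$ as an ordered product and split it as $L = (\text{factors } 0,1,\dots,i-1)\cdot R^{(0,i)}(\dots)R^{(0,i+1)}(\dots)\cdot(\text{factors } i+2,\dots,n)$, where in the module $V_\sigma$ with $z_{\sigma(i)}$ and $z_{\sigma(i+1)}$ swapped the two middle factors carry $z_{\sigma(i+1)}$ and $z_{\sigma(i)}$ respectively. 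The key algebraic input is the dynamical Yang–Baxter equation \Ref{ybe}, applied to the three spaces $0$, $i$, $i+1$ with spectral parameters $w - z_{\sigma(i)}$, $w - z_{\sigma(i+1)}$ and argument $z_{\sigma(i)}-z_{\sigma(i+1)}$ in the $R^{(i,i+1)}$ factor: it lets one push $R^{(i,i+1)}(\la - y\sum_{k=i+2}^n h^{(k)}, z_{\sigma(i)}-z_{\sigma(i+1)})$ past the pair $R^{(0,i)}R^{(0,i+1)}$, converting the ordered product built from the swapped module into the one built from the unswapped module, at the cost of producing the permutation $P^{(i,i+1)}$ on the right. The dynamical shifts match precisely because of the $\la - y\sum_{k=i+2}^n h^{(k)}$ prefactors, which is exactly what the DYBE is designed to accommodate; one must be careful that conjugating by $P^{(i,i+1)}$ exchanges $h^{(i)}\leftrightarrow h^{(i+1)}$, but neither of those appears in the summand $\sum_{k=i+2}^n h^{(k)}$, so no correction is needed there, while the factors $0,1,\dots,i-1$ see the weight $h^{(i)}+h^{(i+1)}$, which is symmetric under the swap.

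Next I would handle the commutation of $\hat R_{i,i+1}$ with the function generators $f(\la,yh,y)$ and with $\dl$. The operator $\hat R_{i,i+1}$ acts by a scalar function of $\la$, $z_{\sigma(i)}$, $z_{\sigma(i+1)}$, $y$ on each $\h$-weight space (more precisely $R^{(i,i+1)}$ commutes with the total $\h$-action on factors $i,i+1$ since the $R$-matrix \Ref{R-m} is block-diagonal with respect to $h^{(i)}+h^{(i+1)}$), and it does not involve $\la$-shifts; hence it manifestly commutes with $\dl$ and with $h$, so it commutes with every $f(\la,yh,y)$. Combining this with the DYBE computation of the previous paragraph gives $\hat R_{i,i+1}\,L(\la,w,y)^{\text{swapped}} = L(\la,w,y)^{\text{unswapped}}\,\hat R_{i,i+1}$ as operators, which after reinstating the $\dl^{\pm 1}$ in $\tilde L_{ab}(w,y)$ is exactly the assertion that $\hat R_{i,i+1}$ commutes with the operator algebra action. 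One should also note, as a sanity check, the inversion relation \Ref{PRP}, which guarantees $\hat R_{i,i+1}$ is invertible and hence an isomorphism of modules, not merely a morphism.

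The main obstacle I anticipate is purely bookkeeping: keeping the three families of dynamical shift parameters $\la - y\sum h^{(k)}$ aligned while moving $R^{(i,i+1)}$ across the product and simultaneously applying $P^{(i,i+1)}$, $K^{(i,i+1)}$-type swaps on the $z$'s, and making sure the spectral-parameter arguments $w - z_{\sigma(j)}$ land on the correct factors after the permutation. There is no conceptual difficulty beyond the DYBE itself; the statement is, after all, the standard fact that the $R$-matrix intertwines the two orderings of a tensor product of evaluation modules, adapted to the dynamical setting, and it parallels the non-dynamical argument of \cite[Theorem 9.1]{RTV2} and the corresponding statements in \cite{FV1}. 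I would therefore present the proof as: (1) cite \Ref{ybe} and \Ref{PRP}; (2) write the local three-space identity and note that all other factors commute with $\hat R_{i,i+1}$; (3) conclude commutation with $L(\la,w,y)$, then with $\tilde L_{ab}(w,y)$ after accounting for $\dl^{\pm1}$; (4) note commutation with $f(\la,yh,y)$ is immediate from weight-preservation and the absence of $\la$-shifts in $\hat R_{i,i+1}$.
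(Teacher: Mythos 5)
Your proposal is correct and follows essentially the same route as the paper: the paper's proof consists of the single sentence that the statement follows from the dynamical Yang--Baxter equation \Ref{ybe}, and your argument is a careful unwinding of exactly that, with the right bookkeeping of the dynamical shifts $\la-y\sum h^{(k)}$, the permutation $P^{(i,i+1)}$, and the trivial commutation with $f(\la,yh,y)$ and $\dl$.
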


\begin{proof}  The statement follows from the dynamical Yang-Baxter equation.
\end{proof}

\subsection{Determinant}
The element
\bean
\label{DET}
\widetilde{\Det}(w) &=&
\frac{\la}{\la-y h}(\tilde L_{22}(w+y,y)\tilde L_{11}(w,y)
- \tilde L_{12}(w+y,y)\tilde L_{21}(w,y)) =
\\
\notag
&=&
\frac{\la}{\la-y h}(\tilde L_{11}(w+y,y)\tilde L_{22}(w,y)
- \tilde L_{21}(w+y,y)\tilde L_{12}(w,y))
\eean
of the operator algebra is called the {\it determinant element}.

\begin{theorem} [\cite{FV1}]
\label{deT}
The determinant element $\widetilde{\Det}(w,y)$ is a central element of the operator algebra.
It acts on $V_\si$ as multiplication by
\bea
\prod_{i=1}^n \frac{w-z_i+y}{w-z_i}.
\eea
\end{theorem}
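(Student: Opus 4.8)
The plan is to verify the two claims—centrality of $\widetilde{\Det}(w,y)$ in the operator algebra, and its eigenvalue on each $V_\si$—by reducing both to the $n=1$ case and then using the multiplicative structure of the $L$-operator. First I would establish the key algebraic fact that the quantum determinant $\Det(w) = \frac{\la}{\la-yh}\bigl(L_{22}(\la,w+y,y)L_{11}(\la,w,y) - L_{12}(\la,w+y,y)L_{21}(\la,w,y)\bigr)$ of the abstract $L$-operator (before passing to difference operators) is central in $\es$ and group-like: this is a standard computation with the relation \Ref{es rel}, following \cite{FV1}. From \Ref{es rel} one extracts the sixteen scalar relations among the $L_{ij}$; the point is that the particular combination above commutes with every $L_{kl}(\la,w',y)$ and with every $f(\la,yh,y)$ after the shift corrections are accounted for. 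The two displayed forms in \Ref{DET} being equal is itself one of the consequences of the $RLL$ relations.

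Next I would translate this into the language of the operator algebra $A_\si$. Using the dictionary $\tilde L_{ij}(w,y) = L_{ij}(\la,w,y)\dl^{\pm1}$ with sign $+$ for $j=1$ and $-$ for $j=2$, one checks that the shift operators in the products $\tilde L_{22}(w+y,y)\tilde L_{11}(w,y)$ and $\tilde L_{12}(w+y,y)\tilde L_{21}(w,y)$ cancel (a $\dl$ against a $\dl^{-1}$), so that $\widetilde{\Det}(w,y)$ is genuinely a multiplication operator in $\la$ composed with the $\h$-grading factor $\frac{\la}{\la-yh}$, not a nontrivial difference operator. Centrality in $A_\si$ then follows from centrality of $\Det(w)$ in $\es$ together with the commutation relations between $f(\la,yh,y)$ and the $\tilde L_{ij}$ listed in the operator-algebra subsection: one verifies that the $\la$-shifts introduced when moving $\widetilde{\Det}(w,y)$ past $\tilde L_{12}$ or $\tilde L_{21}$ are exactly compensated by the $\frac{\la}{\la-yh}$ prefactor, which is why that prefactor is present.

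For the eigenvalue, I would use that the $\es$-module $V_\si = V(z_{\si(1)})\ox\cdots\ox V(z_{\si(n)})$ has $L$-operator equal to the ordered product $R^{(0,1)}\cdots R^{(0,n)}$ of dynamical $R$-matrices, and that the quantum determinant of a product of $L$-operators is the product of their quantum determinants (this is the group-like property of $\Det$ under the coproduct of $\es$). So it suffices to compute $\widetilde{\Det}(w,y)$ on a single evaluation module $V(z_i)$. Using the explicit action of $L_{11},L_{12},L_{21},L_{22}$ on $v_1$ and $v_2$ from the Example in Section \ref{sec dyn g}, a direct two-by-two calculation gives the scalar $\frac{w-z_i+y}{w-z_i}$ on both $v_1$ and $v_2$ (the $\frac{\la}{\la-yh}$ factor and the $\la$-dependent entries of $R(\la,w-z_i,y)$ all cancel). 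Multiplying over $i=1,\dots,n$ yields $\prod_{i=1}^n \frac{w-z_i+y}{w-z_i}$, independently of $\si$.

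The main obstacle is the bookkeeping in the second paragraph: making the cancellation of shift operators precise and checking that the $\la\mapsto\la\pm y$ conjugations relating $\Det(w)$ in $\es$ to $\widetilde{\Det}(w,y)$ in $A_\si$ are exactly the ones absorbed by the $\frac{\la}{\la-yh}$ prefactor. Everything else is either a quotation of \cite{FV1} or a routine finite computation on evaluation modules. Since the statement is attributed to \cite{FV1}, I expect the author's proof to be short—most likely just the reduction to $V(z_i)$ plus the one-module computation, with centrality cited.
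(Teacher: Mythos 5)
Your proposal is correct and matches the paper, which in fact gives no proof at all beyond the attribution to \cite{FV1}; your reconstruction (centrality via the $RLL$ relations, multiplicativity of the quantum determinant under the tensor product of evaluation modules, and the explicit $2\times 2$ computation on a single $V(z_i)$) is the standard argument and the one-module calculation does yield $\frac{w-z_i+y}{w-z_i}$ on both $v_1$ and $v_2$ once the $\dl^{\pm 1}$ shifts and the $\frac{\la}{\la-yh}$ prefactor are tracked as you describe. Nothing further is needed.
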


\subsection{ Vectors $\xi_I$ as eigenvectors}
\label{sec:xieigen}

We are interested in the action of the generating series $\tilde{L}_{22}(w,y)$ on
$V(z_1)\ox \cdots \ox V(z_n)$.

\begin{proposition}
\label{eigen}
Consider the vector $\xi_I$ defined in \Ref{XI} as an element of
$
V(z_1)\ox \ldots \ox V(z_n).
$
Then
\[ \tilde{L}_{22}(w,y) \xi_I = \prod_{i\in I} \frac{w-z_i}{w-z_i-y} \cdot \xi_I.
\]
\end{proposition}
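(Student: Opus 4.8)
The plan is to reduce the statement to two facts already established in the paper: the recursion for the $\xi_I$ vectors (Proposition~\ref{thm:xi_recursion}) and the diagonal action of $\tilde L_{22}(w,y)$ on a single ``highest'' vector, which can be read off from the explicit $\es$-action together with the triangularity of the $\ts_i$ operators. First I would observe that it suffices to prove the claim for $I=\Imin=\{1,\dots,k\}$. Indeed, for arbitrary $I$ one writes $I=\sigma(\Imin)$ for a suitable product of adjacent transpositions, so $\xi_I$ is obtained from $\xi_{\Imin}$ by applying a sequence of $\ts_i$ operators; meanwhile $\tilde L_{22}(w,y)$ intertwines the $\es$-module structures $V_\sigma$ for different orderings of the $z_i$ by Lemma~\ref{thm:Risom}, because each $\hat R_{i,i+1}$ commutes with the operator algebra and $\ts_i$ differs from $\hat R_{i,i+1}$ only by the bookkeeping operator $K^{(i,i+1)}$ that permutes the evaluation parameters. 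Tracking how the eigenvalue $\prod_{i\in I}\frac{w-z_i}{w-z_i-y}$ transforms under permuting the $z_i$ matches exactly the transformation of the index set $I=\sigma(\Imin)$, so the general case follows from the base case.

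For the base case $I=\Imin$, Proposition~\ref{thm:xiI} gives the closed form $\xi_{\Imin}=\prod_{i=1}^k(\lambda+(n-k-i)y)\cdot v_{\Imin}$, so I only need to compute $\tilde L_{22}(w,y)$ on the single elementary tensor $v_{\Imin}=v_1^{\ox k}\ox v_2^{\ox(n-k)}$ and then divide. Here I would use the factored form of the $L$-operator as a product $R^{(0,1)}\cdots R^{(0,n)}$ of dynamical $R$-matrices acting on the auxiliary space (labeled $0$) and the $n$ tensor factors. Applying this product to $v_{\Imin}$ and extracting the $(2,2)$-entry in the auxiliary space is a triangular computation: because $v_{\Imin}$ has all $v_1$'s in the first $k$ slots, the relevant matrix entries of $R(\lambda',w-z_i,y)$ are the simple ones — the $L_{22}$-on-$v_1$ entry $\frac{(\lambda'-y)(w-z_i)}{\lambda'(w-z_i-y)}$ for $i\le k$ and the $L_{22}$-on-$v_2$ entry $1$ for $i>k$ (cf. the Example computing the $\es$-action on $V(z_1)$). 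One must track the dynamical shift in the first argument $\lambda'=\lambda-y\sum_{j>i}h^{(j)}$, evaluated on $v_{\Imin}$; the successive factors $\frac{\lambda'-y}{\lambda'}$ telescope against the shifted prefactor $\prod_i(\lambda+(n-k-i)y)$ after the $\delta$-shift in the definition $\tilde L_{22}=L_{22}(\lambda,w,y)\delta$ is applied, leaving precisely $\prod_{i=1}^k\frac{w-z_i}{w-z_i-y}$.

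The main obstacle I anticipate is the careful bookkeeping of the dynamical parameter: the operator $\tilde L_{22}(w,y)$ is $L_{22}(\lambda,w,y)$ composed with the shift $\delta:\lambda\mapsto\lambda+y$, and inside $L_{22}$ each factor $R^{(0,i)}$ carries its own shift $\lambda-y\sum_{j=i+1}^n h^{(j)}$; evaluated on $v_{\Imin}$ these shifts are concrete integers times $y$, but one must verify that the product of the resulting scalars $\frac{\lambda'_i-y}{\lambda'_i}$ together with the ratio of shifted-to-unshifted prefactors $\prod(\lambda+\cdots+y)/\prod(\lambda+\cdots)$ collapses cleanly, with no leftover $\lambda$-dependence — as it must, since the eigenvalue is $\lambda$-independent. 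An alternative, cleaner route that sidesteps the telescoping is to invoke the determinant relation (Theorem~\ref{deT}): since $\widetilde{\Det}(w)$ acts by $\prod_i\frac{w-z_i+y}{w-z_i}$ and $\tilde L_{11}(w,y)$ can be shown to act on $\xi_I$ by the complementary product $\prod_{i\notin I}\frac{w-z_i+y}{w-z_i}\cdot(\text{shift factor})$, one could deduce the $\tilde L_{22}$ eigenvalue from $\widetilde{\Det}$; however, this still requires computing one of the two diagonal actions directly, so I would keep the direct triangular computation as the primary argument. I would close by noting the parallel with \cite[Theorem 8.2]{RTV2} and \cite[Proposition 2.14]{GRTV} in the non-dynamical setting, where the analogous $\xi_I$ are eigenvectors of the Gelfand–Zetlin algebra.
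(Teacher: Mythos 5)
Your proposal is correct and follows the paper's own argument exactly: the paper proves the statement by a direct calculation for $\Imin$ and then extends to general $I$ via the recursion of Proposition~\ref{thm:xi_recursion} combined with Lemma~\ref{thm:Risom}, which is precisely your reduction. Your detailed telescoping computation for the base case (including the dynamical shifts $\la'_i=\la+(n-2k+i)y$ cancelling against the $\delta$-shifted prefactor $\prod_i(\la+(n-k-i)y)$) fills in what the paper leaves as "direct calculation" and checks out.
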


\begin{proof}
The statement follows by direct calculation for $\Imin$. For the other $\xi_I$ it follows from Proposition \ref{thm:xi_recursion} and Lemma \ref{thm:Risom}.
\end{proof}

By Proposition \ref{eigen},
\bea
\tilde{L}_{22,0}(y) \,\xi_I \,= \, \xi_I .
\eea
  By Corollary \ref{cor basis} the vectors $\xi_I$ form a basis of the $\L_{\la,(\zz,y)}$-module
$V(z_1)\ox \ldots \ox V(z_n)$. For any  $\zeta=\sum_I f_I(\la,\zz, y) \xi_I
\in V(z_1)\ox \ldots \ox V(z_n)$ we have  $\tilde{L}_{22,0}(y) \zeta=\sum_I f_I(\la+y,\zz, y) \xi_I$.
Hence $\tilde{L}_{22,0}(y)$ is invertible and
\bea
\tilde{L}_{22,0}(y)^{-1} \zeta=\sum_I f_I(\la - y,\zz, y) \xi_I.
\eea
The invertibility of $\tilde{L}_{22,0}(y)^{-1}$ allows us to invert the generating series $\tilde{L}_{22}(w,y)$ and define
the generating series $\tilde{L}_{22}(w,y)^{-1}$ such that $\tilde{L}_{22}(w,y)^{-1} \tilde{L}_{22}(w,y) = \Id$.

\subsection{Gelfand-Zetlin algebra}
We define the {\it Gelfand-Zetlin algebra} $\B$ of the $\es$-module $V_{\id}=V(z_1)\ox \ldots \ox V(z_n)$  as the unital commutative
 algebra
 generated by   $\tilde L_{22}(w,y)$,   $\widetilde {Det}(w,y)$, $\tilde{L}_{22,0}(y)^{-1} $, and $\C[y^{\pm1}]$. 
  Here $\C[y^{\pm1}]$ is the algebra of Laurent polynomials in $y$.
More precisely, we expand
\bea
\tilde L_{22}(w,y) =  \sum_{s=0}^\infty \tilde L_{22,s}(y)  w^{-s},
\qquad
\widetilde{Det}(w,u)= \sum_{s=0}^\infty  \widetilde{Det}_s(y) w^{-s},
\eea
and define the Gelfand-Zetlin algebra $\B$ of $V_{\id}$ as the unital commutative algebra
generated by the operators $\tilde L_{22,s}(y),  \widetilde{Det}_s(y) , s\in \Z_{\ge 0}$, $\tilde{L}_{22,0}(y)^{-1} $,
and $\C[y^{\pm1}]$. 

The Gelfand-Zetlin algebra preserves the subspaces $(V(z_1)\ox \ldots \ox V(z_n))_k$. Its restriction to
$(V(z_1)\ox \ldots \ox V(z_n))_k$ will be denoted by $\B_k$.

\subsection{Action of off-diagonal entries of the operator algebra}\label{sec:offdiag}

The off-diagonal elements $\tilde{L}_{1,2}$ and $\tilde{L}_{2,1}$ map different weight subspaces of $V(z_1)\ox \ldots \ox V(z_n)$ into each other.

\begin{proposition}
\label{thm off-diag}

Let $\tilde{F}(w,y)=\tilde{L}_{12}\circ \tilde{L}_{22}^{-1}$ and $\tilde{E}(w,y)=\tilde{L}_{22}^{-1}\circ \tilde{L}_{21}$.
We have
\[
\tilde{F}(w,y)\xi_I =
c_F \sum_{i\in I} \frac{ \xi_{I-\{i\}} }{w-z_i} (\lambda+w-z_i+y(n-2k+1))
\prod_{s\in I-\{i\}} \frac{z_i-z_s-y}{z_i-z_s},
\]
\[
\tilde{E}(w,y)\xi_I =
c_E
\sum_{i\in \Ibar} \frac{ \xi_{I\cup\{i\}} }{w-z_i} (\lambda-w+z_i-y)
\prod_{s\in \Ibar-\{i\}} \frac{z_s-z_i-y}{z_s-z_i},
\]
where $c_F=-y$ and $c_E=-y\frac{1}{(\lambda-y)(\lambda-2y)}$.
\end{proposition}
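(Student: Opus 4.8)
The plan is to leverage the $S_n$-equivariance encoded in the $\xi_I$ and reduce both identities to a single computation per weight. The easy half first: by Proposition~\ref{eigen} the vector $\xi_I$ is an eigenvector of $\tilde L_{22}(w,y)$ with eigenvalue $\prod_{i\in I}\frac{w-z_i}{w-z_i-y}$, hence $\tilde L_{22}(w,y)^{-1}\xi_I=\prod_{i\in I}\frac{w-z_i-y}{w-z_i}\,\xi_I$. Therefore $\tilde F(w,y)\xi_I=\prod_{i\in I}\frac{w-z_i-y}{w-z_i}\cdot\tilde L_{12}(w,y)\xi_I$, while $\tilde E(w,y)\xi_I$ is $\tilde L_{22}(w,y)^{-1}$ applied to $\tilde L_{21}(w,y)\xi_I$, which lies in the weight $k+1$ subspace where $\tilde L_{22}(w,y)^{-1}$ again acts diagonally on the $\xi_J$ with known eigenvalues. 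So everything reduces to the action of the off-diagonal generators $\tilde L_{12}(w,y)$ and $\tilde L_{21}(w,y)$ on the $\xi_I$, modulo a known rescaling.

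The structural input I would use next is that the operators $\ts_i$ of Section~\ref{sec:xi} commute with the operator algebra acting on $V_{\id}=V(z_1)\ox\dots\ox V(z_n)$. Writing $\ts_i=\hat R_{i,i+1}\circ K^{(i,i+1)}$ with $\hat R_{i,i+1}$ the map of Lemma~\ref{thm:Risom} for $\si=\id$: interchanging $z_i$ and $z_{i+1}$ in all coefficients is exactly the relabelling identifying $V_{\id}$ with $V_{s_i}$, so $K^{(i,i+1)}\colon V_{\id}\to V_{s_i}$ is an isomorphism of operator algebra modules, and $\hat R_{i,i+1}\colon V_{s_i}\to V_{\id}$ is one by Lemma~\ref{thm:Risom}; their composite $\ts_i\colon V_{\id}\to V_{\id}$ is then a module automorphism, i.e.\ commutes with every $\tilde L_{ab}(w,y)$, hence with $\tilde L_{22}(w,y)^{-1}$, $\tilde F(w,y)$ and $\tilde E(w,y)$. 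I would also record the semilinearity $\ts_i\circ(\text{mult.\ by }g)=(\text{mult.\ by }K^{(i,i+1)}g)\circ\ts_i$ for $g\in\L_{\la,\zz,y}$, since among the factors of $\ts_i$ only $K^{(i,i+1)}$ touches $\zz$.

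Granting this, the recursion $\xi_{s_j(I)}=\ts_j\xi_I$ of Proposition~\ref{thm:xi_recursion} propagates the asserted formulas along $S_n$-orbits: apply $\ts_j$ to the formula for $\xi_I$, commute $\ts_j$ past $\tilde F(w,y)$ (resp.\ $\tilde E(w,y)$), and use $\ts_j\xi_{I\setminus\{i\}}=\xi_{s_j(I)\setminus\{s_j(i)\}}$ together with the semilinearity. This yields the asserted formula for $\xi_{s_j(I)}$ provided the coefficient of $\xi_{I\setminus\{i\}}$, call it $g^{(I)}_i$, satisfies $K^{(j,j+1)}g^{(I)}_i=g^{(s_j(I))}_{s_j(i)}$; but this is immediate from the explicit shape of $g^{(I)}_i$, which is built from $\la,w,z_i$ and a product over $s\in I\setminus\{i\}$ (over $s\in\Ibar\setminus\{i\}$ in the $\tilde E$-case) of expressions in $z_i-z_s$, while $s_j$ sends $z_i\mapsto z_{s_j(i)}$ and $I\setminus\{i\}\mapsto s_j(I)\setminus\{s_j(i)\}$, leaving $|I|$, $c_F$ and $c_E$ unchanged. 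Since the $s_j$ generate $S_n$, which acts transitively on $\Ik$, it then suffices to verify both identities for a single representative per weight, say $I=\Imin$.

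The base case is the remaining work, and I expect it to be the main obstacle. For $I=\Imin$ one has $\xi_{\Imin}=\prod_{i=1}^{k}(\la+(n-k-i)y)\,v_{\Imin}$ with $v_{\Imin}=v_1^{\ox k}\ox v_2^{\ox(n-k)}$ by Proposition~\ref{thm:xiI}, so $\tilde L_{12}(w,y)\xi_{\Imin}$ and $\tilde L_{21}(w,y)\xi_{\Imin}$ can be computed straight from the explicit $L$-operator $L(\la,w,y)=R^{(0,1)}(\dots)\cdots R^{(0,n)}(\dots)$ of Section~\ref{sec dyn g}: because the auxiliary space changes weight exactly once, only the corner entries $1$ of $R(\la,w,y)$ and a single mixing entry enter and the product telescopes, the one delicate point being to carry the dynamical shift $\la-y\sum_{l>j}h^{(l)}$ through the partially transformed tensor. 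This produces $\tilde L_{12}(w,y)\xi_{\Imin}$ (resp.\ $\tilde L_{21}(w,y)\xi_{\Imin}$) as an explicit combination of the $v_{\Imin\setminus\{p\}}$ (resp.\ $v_{\Imin\cup\{q\}}$); I would then rewrite the result in the $\xi$-basis of the target weight space via the triangular change of basis of Proposition~\ref{thm:xiI} (equivalently, via the definition \Ref{XI} of $\xi_J$ through the weight functions $\Wt^-_J$) and match it to the claimed formula, the constants $c_F=-y$ and $c_E=-y\,\frac1{(\la-y)(\la-2y)}$ falling out of the bookkeeping. The genuinely technical parts are precisely this shift-tracking and the several-step triangular basis change required to produce the factor $\la+w-z_i+y(n-2k+1)$ and the product $\prod_{s\in I\setminus\{i\}}\frac{z_i-z_s-y}{z_i-z_s}$ on the nose; everything else is formal. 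Alternatively, one may try to deduce the $\tilde E$-formula from the $\tilde F$-formula via the $\la\mapsto-\la-(n-2k)y$, $I\mapsto\Ibar$ symmetry underlying the definitions of $\Wt^{\pm}$ in Section~\ref{sec:Wversions}. Compare \cite[Theorem 8.2]{RTV2}, \cite[Proposition 2.14]{GRTV}.
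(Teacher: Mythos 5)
Your reduction scheme is sound and is in fact the same mechanism the paper relies on elsewhere: the paper's own proof of this proposition is only a pointer to a ``straightforward calculation'' and to the analogues in \cite{GRTV} and \cite{RTV2}, and its proof of Proposition \ref{eigen} proceeds exactly as you propose (direct check at $\Imin$, then propagation via Proposition \ref{thm:xi_recursion} and Lemma \ref{thm:Risom}). Your verification that $\ts_j=\hat R_{j,j+1}\circ K^{(j,j+1)}$ is an automorphism of the operator-algebra module $V_{\id}$, hence commutes with $\tilde L_{22}^{-1}$, $\tilde F$ and $\tilde E$, and that the coefficients $g^{(I)}_i$ have the right $K^{(j,j+1)}$-covariance to be transported along $S_n$-orbits, is correct; so is the observation that $\tilde L_{22}^{-1}$ acts on $\xi_I$ by the $\la$-independent scalar $\prod_{i\in I}\frac{w-z_i-y}{w-z_i}$, which commutes past $\tilde L_{12}$.

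The genuine gap is that the base case at $I=\Imin$, which you correctly identify as ``the remaining work,'' is the only place where the content of the proposition actually lives, and you do not carry it out. The equivariance argument shows that $\tilde F\xi_I$ is \emph{some} combination $\sum_i g^{(I)}_i\xi_{I-\{i\}}$ with $S_n$-covariant coefficients; it cannot produce the specific factors $(\lambda+w-z_i+y(n-2k+1))$ and $(\lambda-w+z_i-y)$, the products over $s$, or the constants $c_F=-y$ and $c_E=-y/((\lambda-y)(\lambda-2y))$. Two nontrivial computations remain unexecuted: (i) the telescoping evaluation of $L_{12}(\la,w,y)v_{\Imin}$ and $L_{21}(\la,w,y)v_{\Imin}$ from the product of $R^{(0,j)}$'s, including correct tracking of the dynamical shifts $\la-y\sum_{l>j}h^{(l)}$ and of the $\la$-shift hidden in $\tilde L_{12}=L_{12}\dl$, $\tilde L_{21}=L_{21}\dl^{-1}$ (the latter multiplies the prefactor $\prod_i(\la+(n-k-i)y)$ of $\xi_{\Imin}$ by a shifted version of itself, which is where $c_E$'s denominator must come from); and (ii) the re-expansion of the resulting $v_{\Imin-\{p\}}$ (resp.\ $v_{\Imin\cup\{q\}}$) in the $\xi_J$-basis of the adjacent weight space, which for $p<k$ is a genuinely multi-term triangular change of basis, not a rescaling. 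Until (i) and (ii) are done and shown to match the right-hand sides, the formulas are not proved; what you have is a correct scheme that reduces the proposition to one finite, checkable identity per weight.
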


\begin{proof}
The proof is by straightforward calculation, cf. \cite[Theorem 4.16]{GRTV} and \cite[Theorem 11.8]{RTV2}.
\end{proof}

Notice that the action of $\tilde L_{11}(w,y)$ on the elements $\xi_I$ can be recovered from the actions of $\widetilde{Det}(w,y)$,
$\tilde{L}_{1,2}(w,y)$, $\tilde{L}_{2,1}(w,y)$, $\tilde{L}_{2,2}(w,y)$, see \Ref{DET}.

\section{Action of dynamical quantum group on cohomology}
\label{on coho}

\subsection{Action of Gelfand-Zetlin algebra on cohomology}

Recall  the spaces
\bea
V(z_1)\ox \ldots \ox V(z_n) = (\C^2)^{\ox n} \ox \L_{\la,(\zz,y)},
\qquad
H^*_{T}(\XX_n)\ox \L_{\la,(\zz,y)}.
\eea
Since $V(z_1)\ox \ldots \ox V(z_n)$ is an $\es$-module, the isomorphism
\[
\Stab_{\id} : V(z_1)\ox \ldots \ox V(z_n)  \to H^*_{T}(\XX_n) \ox \L_{\la,(\zz,y)},\qquad
1_I \mapsto \kappa_{\id,I},
\]
of free $\L_{\la,(\zz,y)}$-modules induces on $H^*_{T}(\XX_n) \ox \L_{\la,(\zz,y)}$ a structure on an $\es$-module.
In this section we describe the $\es$-action on $H^*_{T}(\XX_n) \ox \L_{\la,(\zz,y)}$. We start with the action of the Gelfand-Zetlin algebra
$\B$, which preserve the subspaces $(V(z_1)\ox \ldots \ox V(z_n))_k\subset V(z_1)\ox \ldots \ox V(z_n)$.

Recall that the operator  $\delta$ acts on $H^*_T(\TGrkn) \ox \L_{\la,(\zz,y)}$ by the formula
\[
\delta \cdot [f(\lambda,\gamma,\zz,y)] = [f(\lambda+y,\gamma,\zz\,y)]
\qquad \text{for}\ [f(\la,\gamma,\zz,y)]\in H^*_T(\TGrkn) \ox \L_{\la,(\zz,y)}.
\]

\begin{proposition}
\label{Prop}
For any $k$ and $\zeta \in (V(z_1)\ox \ldots \ox V(z_n))_k $ we have
\bea
&&
\Stab_{\id}
\left(
\tilde{L}_{2,2}(w,y)\zeta
\right)
=
\left[
\prod_{i=1}^k \frac{w-\gamma_{1,i}}{w-\gamma_{1,i}-y} \right]\cdot \delta
\cdot
\Stab_{id}(\zeta),
\\
&&
\Stab_{\id}
\left(
\widetilde{Det}(w,y)\zeta
\right)
=
\left[ \prod_{i=1}^n \frac{w-z_i+y}{w-z_i}
\right] \cdot
\Stab_{id}(\zeta).
\eea

\end{proposition}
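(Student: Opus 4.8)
The plan is to verify both identities by evaluating each side at the torus fixed points $x_J$, $J\in\Ik$, and invoking equivariant localization (Section \ref{sec:loc}), which reduces everything to checking equalities of rational functions in $\C[\zz,y]\ox\L_{\la,(\zz,y)}$. By Lemma \ref{thm:kappaW}, the class $\Stab_{\id}(\zeta)$ has restriction at $x_J$ given by the corresponding combination of $\Wt^+_{\id,I}(\la,\zz_J,\zz,y)$, and the operators $[\prod_i (w-\gamma_{1,i})/(w-\gamma_{1,i}-y)]$ and $[\prod_i(w-z_i+y)/(w-z_i)]$ restrict at $x_J$ to $\prod_{i\in J}(w-z_i)/(w-z_i-y)$ and $\prod_{i=1}^n(w-z_i+y)/(w-z_i)$ respectively (using $\{\gamma_{1,*}\}\mapsto\{z_a\mid a\in J\}$). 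Thus the first identity at $x_J$ reads: the $x_J$-restriction of $\Stab_{\id}(\tilde L_{22}(w,y)\zeta)$ equals $\prod_{i\in J}\frac{w-z_i}{w-z_i-y}$ times the $x_J$-restriction of $\delta\cdot\Stab_{\id}(\zeta)$, where $\delta$ acts by $\la\mapsto\la+y$.

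Next I would use the eigenbasis $\{\xi_I\}$ of Proposition \ref{eigen}. Write $\zeta=\sum_I f_I(\la,\zz,y)\,\xi_I$; it suffices to prove the identities for $\zeta=\xi_I$ by $\L_{\la,(\zz,y)}$-linearity, carefully tracking that the operators in the Gelfand-Zetlin algebra are difference operators in $\la$, so one must respect the $\delta$-twist. By Proposition \ref{eigen}, $\tilde L_{22}(w,y)\xi_I=\prod_{i\in I}\frac{w-z_i}{w-z_i-y}\xi_I$, and the Corollary after Theorem in Section \ref{sec:inv} gives $\Stab_{\id}(\xi_I)=\left[\prod_{j=1}^k\prod_{a\in\Ibar}(\gamma_{1,j}-z_a)\right]$. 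Restricting at $x_J$ this is $\prod_{j\in J}\prod_{a\in\Ibar}(z_j-z_a)$, and one checks by the localization criterion that $\prod_{i\in I}\frac{w-z_i}{w-z_i-y}$ — a scalar in $w,\zz,y$ not involving $\la$ — commutes past $\Stab_{\id}$ to become the operator $\left[\prod_{i=1}^k\frac{w-\gamma_{1,i}}{w-\gamma_{1,i}-y}\right]$, since $\Loc_J$ sends $\gamma_{1,i}$'s to $\{z_a\mid a\in J\}$ while $\xi_I$ restriction involves $\{z_a\mid a\in I\}$; the matching requires the interpolation/triangularity Lemmas \ref{lem:zero} and \ref{lem:main} to handle the off-diagonal ($J\ne I$) entries. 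For the determinant statement, Theorem \ref{deT} says $\widetilde{\Det}(w,y)$ acts on $V_{\id}$ as the scalar $\prod_{i=1}^n\frac{w-z_i+y}{w-z_i}$, which is independent of $\la$ and of the module decomposition, so it simply commutes through $\Stab_{\id}$ and reappears as the central operator $\left[\prod_{i=1}^n\frac{w-z_i+y}{w-z_i}\right]$ — here there is essentially nothing to do beyond observing that this scalar is already a symmetric function of $\zz$, hence lies in $H^*_{GL_n\times\C^\times}(\XX_n)$ and acts by ordinary multiplication.

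The main obstacle will be the bookkeeping in the first identity: the operator $\tilde L_{22}(w,y)=L_{22}(\la,w,y)\dl$ genuinely shifts $\la$, so when transporting it through $\Stab_{\id}$ one must confirm that the resulting cohomology operator is exactly $\left[\prod_i\frac{w-\gamma_{1,i}}{w-\gamma_{1,i}-y}\right]\delta$ and not some $\la$-dependent twist thereof. Concretely, one has to check that for $\zeta=\sum_I f_I\xi_I$,
\[
\Stab_{\id}(\tilde L_{22}(w,y)\zeta)=\sum_I f_I(\la+y,\zz,y)\,\textstyle\prod_{i\in I}\frac{w-z_i}{w-z_i-y}\,\Stab_{\id}(\xi_I),
\]
and that the right-hand side agrees at each $x_J$ with $\prod_{i\in J}\frac{w-z_i}{w-z_i-y}\cdot f_I(\la+y,\zz,y)$-weighted restrictions — the apparent discrepancy between the product over $I$ and the product over $J$ is resolved precisely because $\Stab_{\id}(\xi_I)|_{x_J}=0$ unless $J\le_{\id}I$ (Lemma \ref{lem:zero}) and on the diagonal the eigenvalue product over $I$ and the geometric operator's restriction over $J$ coincide. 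This is the rational-cohomological analog of \cite[Theorem 4.16]{GRTV} and \cite[Theorem 11.8]{RTV2}, and the calculation is parallel to those; I expect no new difficulty beyond the $\la$-shift bookkeeping, which the dynamical setting forces one to do with care.
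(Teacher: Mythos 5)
Your proof follows essentially the same route as the paper's: the paper's proof of Proposition \ref{Prop} is a one-line reference to Section \ref{sec:xieigen}, i.e.\ to the eigenvector property $\tilde L_{22}(w,y)\,\xi_I=\prod_{i\in I}\frac{w-z_i}{w-z_i-y}\,\xi_I$ of Proposition \ref{eigen} combined with the explicit form of $\Stab_{\id}^{-1}=\nu$ from Section \ref{sec:inv}, which is exactly the computation you carry out; the determinant half via Theorem \ref{deT} is also as intended.

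One step is mis-justified, although the fix is already contained in your own text. You resolve the mismatch between the eigenvalue $\prod_{i\in I}\frac{w-z_i}{w-z_i-y}$ and the restriction $\prod_{i\in J}\frac{w-z_i}{w-z_i-y}$ of the geometric operator by appealing to the triangularity of Lemma \ref{lem:zero}, i.e.\ to the vanishing of $\Stab_{\id}(\xi_I)|_{x_J}$ for $J\not\le_{\id}I$. Triangularity alone does not suffice: for $J\le_{\id}I$ with $J\ne I$ the two products genuinely differ, so any nonzero off-diagonal restriction would ruin the identity. What actually closes the argument is the stronger vanishing $\Stab_{\id}(\xi_I)|_{x_J}=0$ for \emph{all} $J\ne I$, which follows from the restriction you already wrote down: $\prod_{j\in J}\prod_{a\in\Ibar}(z_j-z_a)$ contains the factor $z_j-z_j=0$ as soon as some $j\in J$ lies in $\Ibar$, i.e.\ as soon as $J\ne I$ (since $|J|=|I|=k$). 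Equivalently, under localization $\Stab_{\id}(\xi_I)=R_I(\zz)\cdot 1_I$, so only the diagonal term $J=I$ survives, and there the two products coincide. With that substitution for Lemma \ref{lem:zero}, your argument is complete; the $\la$-shift bookkeeping ($\tilde L_{22}=L_{22}(\la,w,y)\,\delta$ on the module side versus $[a_s]\cdot\delta$ on cohomology) is handled correctly.
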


\begin{proof}
The statement follows from Section \ref{sec:xieigen}.
\end{proof}

Consider the expansions
\bea
\prod_{i=1}^k \frac{w-\gamma_{1,i}}{w-\gamma_{1,i}-y}  = \sum_{s=0}^\infty a_{s}(\ga_{1,1},\dots,\ga_{1,k}, y) w^{-s},
\qquad
\prod_{i=1}^n \frac{w-z_i+y}{w-z_i} = \sum_{s=0}^\infty b_s(\zz,y) w^{-s}.
\eea

\begin{lemma}
\label{lem gen}
The elements $a_{s}(\ga_{1,1},\dots,\ga_{1,k}, y)$, $s\in \Z_{\geq 0}$, generate the $\C[y^{\pm 1}]$-module
 \\
 $\C[\ga_{1,1},\dots,\ga_{1,k}]^{S_k}\ox \C[y^{\pm 1}]$ and
the elements $b_{s}(\zz, y)$, $s\in \Z_{\geq 0}$, generate the $\C[y^{\pm 1}]$-module $\C[\zz]^{S_n}\ox \C[y^{\pm 1}]$.
\qed
\end{lemma}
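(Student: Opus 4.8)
The plan is to control the coefficients $a_s$ through their lowest order in the variable $y$. Writing $g(w)=\prod_{i=1}^{k}\frac{w-\gamma_{1,i}}{w-\gamma_{1,i}-y}$, I would first compute
\[
\log g(w)=\sum_{m\ge 1}\frac{1}{m\,w^{m}}\sum_{i=1}^{k}\bigl((\gamma_{1,i}+y)^{m}-\gamma_{1,i}^{m}\bigr).
\]
Since $\sum_{i}\bigl((\gamma_{1,i}+y)^{m}-\gamma_{1,i}^{m}\bigr)=m\,y\,p_{m-1}+O(y^{2})$, where $p_{j}=\sum_{i}\gamma_{1,i}^{\,j}$, exponentiating yields
\[
a_{s}=y\,p_{s-1}+O(y^{2}),\qquad s\ge 1 .
\]
Moreover $g$ is invariant under the scaling $\deg\gamma_{1,i}=\deg y=\deg w=1$, so each $a_{s}$ is homogeneous of degree $s$ in $(\gamma_{1,*},y)$; hence the $O(y^{2})$ remainder of $a_{s}$ is a symmetric polynomial in $\gamma_{1,*}$ of $\gamma$-degree at most $s-2$.

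Before the main step I would point out that, read literally, the statement asks for generation of $\C[\gamma_{1,*}]^{S_k}\ox\C[y^{\pm1}]$ as a $\C[y^{\pm1}]$-\emph{module}, and this is too strong. Indeed, using $a_{s}=\sum_{l}(-1)^{l}e_{l}\,h_{s-l}(\gamma_{1,*}+y)$ (with $e_l,h_m$ the elementary and complete symmetric functions) together with the fact that the $\gamma$-degree-$j$ part of $h_{m}(\gamma_{1,*}+y)$ is a scalar multiple of $y^{m-j}h_{j}$, the $\gamma$-degree-$d$ component of \emph{any} $\C[y^{\pm1}]$-linear combination of the $a_{s}$ lies in $\mathrm{span}_{\C}\{e_{l}h_{d-l}:0\le l\le\min(k,d)\}$. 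For $k=2$, $d=4$ the two-variable relation $h_{4}=e_{1}h_{3}-e_{2}h_{2}$ collapses this to the $2$-dimensional $\mathrm{span}\{e_{1}h_{3},e_{2}h_{2}\}$, which does not contain $\gamma_{1,1}^{2}\gamma_{1,2}^{2}$, whereas $\dim(\C[\gamma_{1,*}]^{S_2})_{4}=3$. So generation must be understood in the \emph{algebra} sense, and this is exactly what the application needs (Theorem~\ref{thm MAIN}), since $\B_{k}$ is an algebra and the operators $\tilde L_{22,s}$ may be multiplied.

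The core step is then to recover the power sums by an $O(y)$-triangular induction. As $\C[\gamma_{1,*}]^{S_k}=\C[p_{1},\dots,p_{k}]$, it suffices to put $p_{1},\dots,p_{k}$ into the $\C[y^{\pm1}]$-subalgebra $\mathcal A$ generated by $\{a_{s}\}$. We have $p_{0}=k\in\mathcal A$. Assuming $p_{1},\dots,p_{j-1}\in\mathcal A$, the $O(y^{2})$ remainder of $a_{j+1}$ has $\gamma$-degree $\le j-1$, hence is a polynomial in $p_{1},\dots,p_{j-1}$ with coefficients in $\C[y^{\pm1}]$ and already lies in $\mathcal A$; subtracting it from $a_{j+1}=y\,p_{j}+O(y^{2})$ and multiplying by $y^{-1}\in\C[y^{\pm1}]$ gives $p_{j}\in\mathcal A$. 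Thus $\mathcal A=\C[\gamma_{1,*}]^{S_k}\ox\C[y^{\pm1}]$. The statement for the $b_{s}$ is identical after replacing $(\gamma_{1,*},k)$ by $(\zz,n)$: from $\prod_{i=1}^{n}\frac{w-z_{i}+y}{w-z_{i}}$ one finds $b_{s}=-y\,\tilde p_{s-1}+O(y^{2})$ with $\tilde p_{j}=\sum_{i}z_{i}^{\,j}$, and the same induction recovers $\tilde p_{1},\dots,\tilde p_{n}$.

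The only genuine subtlety, and the main obstacle, is the module-versus-algebra point flagged above; once generation is read in the algebra sense the rest is routine, resting on the single identity $a_{s}\equiv y\,p_{s-1}\pmod{y^{2}}$ and the homogeneity bound on the remainder. No combinatorics of the weight functions enters here; cf. the power-sum arguments of \cite{RTV1,RTV2}.
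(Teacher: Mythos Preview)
The paper gives no proof of this lemma (only a \qed), so there is nothing to compare your argument against. Your contribution is nonetheless substantive, because you have detected and repaired an imprecision in the statement.

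Your counterexample is correct. Writing $a_s=\sum_{l}(-1)^{l}e_l(\gamma)\,h_{s-l}(\gamma+y)$ and using the translation identity $h_m(\gamma+y)=\sum_{j}\binom{k+m-1}{m-j}\,y^{\,m-j}h_j(\gamma)$, the $\gamma$-degree-$d$ piece of every $a_s$ lies in $\mathrm{span}_{\C}\{e_l h_{d-l}\}$. For $k=2$, $d=4$ the Newton relation $h_4=e_1h_3-e_2h_2$ collapses this span to $\mathrm{span}\{e_1h_3,\,e_2h_2\}$; expressing these in the basis $\{e_1^4,\,e_1^2e_2,\,e_2^2\}$ one checks $e_2^2=\gamma_{1,1}^2\gamma_{1,2}^2$ is not in the span. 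Hence the $a_s$ do \emph{not} generate $\C[\gamma_{1,*}]^{S_k}\ox\C[y^{\pm1}]$ as a $\C[y^{\pm1}]$-module, and the word ``module'' in the lemma must be read as ``algebra''. Your remark that this is exactly what the proof of Theorem~\ref{thm MAIN} requires is on point: $\B_k$ is defined as a unital algebra, $\alpha(\tilde L_{22,0})=\delta$, and $\alpha(\tilde L_{22,s})\,\alpha(\tilde L_{22,0})^{-1}=[a_s]$ since $a_s$ is independent of $\lambda$; so one only needs that the $[a_s],[b_s]$ generate $H^*_{GL_n\times\C^\times}(\TGrkn)\ox\C[y^{\pm1}]$ as a $\C[y^{\pm1}]$-algebra.

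Your proof of algebra generation is clean and correct. The key identity $a_s\equiv y\,p_{s-1}\pmod{y^2}$ follows from your logarithmic expansion, homogeneity forces the remainder to have $\gamma$-degree $\le s-2$, and the triangular induction recovering $p_1,\dots,p_k$ goes through because any symmetric polynomial of $\gamma$-degree $\le j-1$ is a polynomial in $p_1,\dots,p_{j-1}$ over $\C$. One tiny slip: for $\prod_i\frac{w-z_i+y}{w-z_i}$ the same computation gives $b_s=+y\,\tilde p_{s-1}+O(y^2)$, not $-y\,\tilde p_{s-1}$ (check $n=1$: $\frac{w-z_1+y}{w-z_1}=1+\sum_{s\ge1}y\,z_1^{\,s-1}w^{-s}$). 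The sign is irrelevant to the argument.
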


The subalgebra
\bea
H^*_{GL_n\times\C^\times}(\TGrkn)\ox\C[y^{\pm1}]\subset
H^*_T(\TGrkn) \ox \L_{\la,(\zz,y)}
\eea
acts on $H^*_T(\TGrkn) \ox \L_{\la,(\zz,y)}$ by multiplication.

\begin{theorem}
\label{thm MAIN}
The map
\bean
\label{iSo}
\al :\quad \tilde L_{22,s}(y) \mapsto [a_s(\Ga,y)]\cdot\delta,
\qquad
\widetilde{Det}_{s}(y) \mapsto [b_s(\zz,y)],
\qquad s\in\Z_{\geq 0},
\eean
defines an isomorphism of the Gelfand-Zetlin algebra $\B_k$ acting on
$(V(z_1)\ox \ldots \ox V(z_n))_k $  and the algebra $H^*_{GL_n}(\TGrkn) \ox \C[y^{\pm1}]\ox \C[\delta^{\pm1}]$
acting on $H^*_T(\TGrkn) \ox \L_{\la,(\zz,y)}$.

Consider $H^*_T(\TGrkn) \ox \L_{\la,(\zz,y)}$ as an $H^*_{GL_n\times \C^\times}(\TGrkn) \ox \C[y^{\pm1}]\ox \C[\delta^{\pm1}]$-module.
Then the isomorphisms
\bea
\al  &:& \B_k \to H^*_{GL_n}(\TGrkn) \ox \C[y^{\pm1}]\ox \C[\delta^{\pm1}],
\\
\Stab_{\id}  &:& (V(z_1)\ox \ldots \ox V(z_n))_k  \to H^*_{T}(\TGrkn) \ox \L_{\la,(\zz,y)}
\eea
define an isomorphism of the $\B_k$-module $(V(z_1)\ox \ldots \ox V(z_n))_k$ and the
$H^*_{GL_n\times\C^\times }(\TGrkn) \ox \C[y^{\pm1}]\ox \C[\delta^{\pm1}]$-module
$H^*_T(\TGrkn) \ox \L_{\la,(\zz,y)}$.

\end{theorem}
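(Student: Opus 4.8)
The plan is to transport the Gelfand--Zetlin action to the cohomology side along $\Stab_{\id}$ and then identify the resulting algebra of operators; the only non-formal inputs are Proposition~\ref{Prop} and Lemma~\ref{lem gen}. First I would observe that, since $\Stab_{\id}:(V(z_1)\ox\cdots\ox V(z_n))_k\to H^*_T(\TGrkn)\ox\L_{\la,(\zz,y)}$ is an isomorphism of $\L_{\la,(\zz,y)}$-modules, conjugation $A\mapsto\Stab_{\id}\circ A\circ\Stab_{\id}^{-1}$ is an algebra isomorphism of the two endomorphism algebras; it therefore restricts to an isomorphism of $\B_k$ onto its image, and it automatically intertwines the $\B_k$-action on the source with the image-algebra action on the target, so no relations in $\B_k$ have to be checked by hand. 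Next I would extract the coefficient of $w^{-s}$ in the two generating-series identities of Proposition~\ref{Prop}, using $\prod_{i=1}^k\frac{w-\gamma_{1,i}}{w-\gamma_{1,i}-y}=\sum_s a_s(\Ga,y)w^{-s}$ and $\prod_{i=1}^n\frac{w-z_i+y}{w-z_i}=\sum_s b_s(\zz,y)w^{-s}$. This shows that the conjugation sends $\tilde L_{22,s}(y)$ to the operator $[a_s(\Ga,y)]\cdot\delta$ and $\widetilde{Det}_{s}(y)$ to $[b_s(\zz,y)]$; since $a_0=b_0=1$, it also sends $\tilde L_{22,0}(y)$ to $\delta$, hence $\tilde L_{22,0}(y)^{-1}$ to $\delta^{-1}$, and it fixes $\C[y^{\pm1}]$. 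Thus the map $\al$ of the theorem is exactly this conjugation isomorphism written out on the generators of $\B_k$, so it is a well-defined algebra isomorphism of $\B_k$ onto the subalgebra $\mathcal A\subset\End\bigl(H^*_T(\TGrkn)\ox\L_{\la,(\zz,y)}\bigr)$ generated by $[a_s(\Ga,y)]\delta$, $[b_s(\zz,y)]$, $\delta^{-1}$ and $\C[y^{\pm1}]$, and $\Stab_{\id}$ intertwines the $\B_k$-module and $\mathcal A$-module structures.

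It then remains to identify $\mathcal A$ with the image of $H^*_{GL_n\times\C^\times}(\TGrkn)\ox\C[y^{\pm1}]\ox\C[\delta^{\pm1}]$ acting on $H^*_T(\TGrkn)\ox\L_{\la,(\zz,y)}$ (which is the same as $H^*_{GL_n}(\TGrkn)\ox\C[y^{\pm1}]\ox\C[\delta^{\pm1}]$, the rings agreeing after inverting $y$). Since $\delta$ only shifts $\la$, it commutes with multiplication by every $\la$-independent class, so $\mathcal A$ is obtained from its subalgebra of multiplication operators (the one generated by $\{[a_s(\Ga,y)],[b_s(\zz,y)]\}\cup\C[y^{\pm1}]$) by adjoining $\delta^{\pm1}$. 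By Lemma~\ref{lem gen} the $a_s$ generate $\C[\gamma_{1,1},\dots,\gamma_{1,k}]^{S_k}\ox\C[y^{\pm1}]$ over $\C[y^{\pm1}]$ and the $b_s$ generate $\C[\zz]^{S_n}\ox\C[y^{\pm1}]$, so I would then check that $\C[\gamma_{1,1},\dots,\gamma_{1,k}]^{S_k}$ together with $\C[\zz]^{S_n}$ generate $H^*_{GL_n\times\C^\times}(\TGrkn)$. This follows from the presentation~\Ref{eqn:Hpr}: the defining relation $\prod_{i=1}^k(u-\gamma_{1,i})\prod_{i=1}^{n-k}(u-\gamma_{2,i})=\prod_{i=1}^n(u-z_i)$ exhibits $\prod_{i=1}^{n-k}(u-\gamma_{2,i})$ as $\prod_{i=1}^n(u-z_i)$ divided by $\prod_{i=1}^k(u-\gamma_{1,i})$, and expanding in powers of $u^{-1}$ (the divisor being $u^k$ times a series with constant term $1$, hence invertible) shows that the elementary symmetric functions of the $\gamma_{2,*}$ are polynomials in those of the $\gamma_{1,*}$ and of the $z_*$. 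Consequently the multiplication operators in $\mathcal A$ are exactly the image of $H^*_{GL_n\times\C^\times}(\TGrkn)\ox\C[y^{\pm1}]$, so $\mathcal A$ is exactly the image of $H^*_{GL_n\times\C^\times}(\TGrkn)\ox\C[y^{\pm1}]\ox\C[\delta^{\pm1}]$. This gives the algebra isomorphism, and combined with the intertwining property of $\Stab_{\id}$ from the first paragraph it gives the module isomorphism. (One may add that the action is faithful, since $H^*_{GL_n\times\C^\times}(\TGrkn)\cong\C[\gamma_{1,1},\dots,\gamma_{1,k}]^{S_k}\ox\C[\gamma_{2,1},\dots,\gamma_{2,n-k}]^{S_{n-k}}\ox\C[y]$ is an integral domain and $\delta$ has infinite order, so the target may be read as an abstract algebra, but this is not needed.)

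Granting Proposition~\ref{Prop} and Lemma~\ref{lem gen}, the argument is mostly bookkeeping about operator subalgebras; the genuine content is already in Proposition~\ref{Prop}, which rests on the eigenvalue computation of Proposition~\ref{eigen} and the fixed-point formula $\Stab_{\id}(\xi_I)=\bigl[\prod_{i=1}^k\prod_{j\in\Ibar}(\gamma_{1,i}-z_j)\bigr]$. The step where I expect to have to be most careful is matching the shift-operator conventions when passing from the generating-series statement of Proposition~\ref{Prop} to the coefficientwise assignments defining $\al$, so that $\delta$ (and not $\delta^{-1}$) appears with $\tilde L_{22,s}$; and, to a lesser extent, the small commutative-algebra verification that $\C[\gamma_{1,1},\dots,\gamma_{1,k}]^{S_k}$ and $\C[\zz]^{S_n}$ generate the equivariant cohomology ring.
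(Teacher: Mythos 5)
Your proposal is correct and follows the same route as the paper, whose proof of Theorem \ref{thm MAIN} is precisely the one-line deduction from Proposition \ref{Prop} and Lemma \ref{lem gen} that you spell out. The details you supply — extracting $w^{-s}$ coefficients, recovering the multiplication operators via $\delta^{-1}$, and checking from the presentation \Ref{eqn:Hpr} that $\C[\gamma_{1,*}]^{S_k}$ and $\C[\zz]^{S_n}$ generate $H^*_{GL_n\times\C^\times}(\TGrkn)$ — are exactly the bookkeeping the authors leave implicit, and they are all sound.
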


\begin{proof}
The theorem follows from Proposition \ref{Prop} and Lemma \ref{lem gen}.
\end{proof}

\subsection{Action of $\es$ on cohomology}

The actions of the off-diagonal entries of the operator algebra can be interpreted as actions on the cohomology of the Grassmannians. Namely, for
$$ \Ga=(\gamma_{1,1},\ldots,\gamma_{1,k}; \gamma_{2,1},\ldots, \gamma_{2,n-k})$$
denote
$$ \Ga^{'i}=(\gamma_{1,1},\ldots,\gamma_{1,k-1},\gamma_{2,i}; \gamma_{2,1},\ldots,\check{\gamma}_{2,i},\ldots, \gamma_{2,n-k+1})$$
and
$$
\Ga^{i'}=(\gamma_{1,1},\ldots,\check{\gamma}_{1,i},\ldots,\gamma_{1,k+1}; \gamma_{1,i},\gamma_{2,1},\ldots, \gamma_{2,n-k-1}).$$
Here the $\check{\ }$ sign means omission.

\begin{proposition}
\label{thm:offdiagactionH}
Conjugated by the $\Stab_{\id}$ isomorphism, the series $\tilde{F}(w)$ and $\tilde{E}(w)$  act as
\bea
\tilde{F}(w): H^*_T(T^*\Gr_{k}\C^n)\ox \L_{\lambda,(\zz,y)} \to  H^*_T(T^*\Gr_{k-1}\C^n)\ox \L_{\lambda,(\zz,y)}
\eea
\bea
&&
[f(\lambda,\Ga,\zz,y)] \mapsto \qquad\qquad\qquad \
\\
&&
\phantom{aa}
\left[
(-1)^{k-1}c_F \sum_{i=1}^{n-k+1}
\frac{ f(\lambda,\Gamma^{'i},\zz,y)}{w-\gamma_{2,i}}
(\lambda+w-\gamma_{2,i}+y(n-2k+1))
\frac{ \prod_{j=1}^{k-1}(\gamma_{2,i}-\gamma_{1,j}-y)}
{\prod_{j=1, j\neq i}^{n-k+1} (\gamma_{2,i}-\gamma_{2,j})} \right],
\eea
and
\[
\tilde{E}(w): H^*_T(T^*\Gr_{k}\C^n)\ox \L_{\lambda,(\zz,y)} \to  H^*_T(T^*\Gr_{k+1}\C^n)\ox \L_{\lambda,(\zz,y)}
\]
\bea
&&
[f(\lambda,\Gamma,\zz,y)] \mapsto \qquad\qquad\qquad \
\\
&&
\phantom{aa}
\left[
(-1)^{n-k-1}c_E \sum_{i=1}^{k+1}
\frac{ f(\lambda-2y,\Gamma^{i'},\zz,y)}{w-\gamma_{1,i}}
(\lambda-w+\gamma_{1,i}-y)
\frac{ \prod_{j=1}^{n-k-1}(\gamma_{2,j}-\gamma_{1,i}-y)}
{\prod_{j=1, j\neq i}^{k+1} (\gamma_{1,j}-\gamma_{1,i})} \right],
\eea
respectively.
\end{proposition}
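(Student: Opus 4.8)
The plan is to translate the operator-algebra formulas of Proposition \ref{thm off-diag} into cohomology via the isomorphism $\Stab_{\id}$ and its inverse $\nu$, exactly as we did for the Gelfand-Zetlin generators in Proposition \ref{Prop}. First I would recall that, by Proposition \ref{thm off-diag}, the operators $\tilde F(w,y)$ and $\tilde E(w,y)$ act on the basis $\{\xi_I\}$ by explicit finite sums over single-index modifications $I\mapsto I-\{i\}$ and $I\mapsto I\cup\{i\}$. On the cohomology side, the basis $\{\xi_I\}$ corresponds, via $\Stab_{\id}$, to the classes $[\prod_{i=1}^k\prod_{j\in\Ibar}(\gamma_{1,i}-z_j)]$ by the Corollary at the end of Section \ref{sec:inv}; equivalently, $\nu$ sends a class $[f(\la,\Ga,\zz,y)]$ to $\sum_I \frac{f(\la,\zz_I,\zz,y)}{R_I(\zz)}\xi_I$. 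So the strategy is: start with an arbitrary class $[f]$, apply $\nu$ to expand it in the $\xi_I$ basis, apply the $\tilde F$ (resp.\ $\tilde E$) formula term by term, and then apply $\Stab_{\id}$ to reassemble a cohomology class, recognizing the result as a localization.

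The key computational step is the reassembly. After applying $\nu$ and then $\tilde F(w,y)$ we obtain $\sum_{J\in\Ik}\sum_{i\in J}\frac{f(\la,\zz_J,\zz,y)}{R_J(\zz)}\cdot\frac{c_F}{w-z_i}(\la+w-z_i+y(n-2k+1))\prod_{s\in J-\{i\}}\frac{z_i-z_s-y}{z_i-z_s}\,\xi_{J-\{i\}}$. Reindexing by $K=J-\{i\}\in\II_{k-1}$ and $i\in\Kbar$ with $J=K\cup\{i\}$, this becomes $\sum_{K\in\II_{k-1}}\big(\sum_{i\in\Kbar}\cdots\big)\xi_K$, and one must check that the coefficient of $\xi_K$, as a rational function of $\zz$, is precisely the restriction to $\zz_K$ (with the $\gamma_{1,*}$ specialized to $\{z_a: a\in K\}$ and $\gamma_{2,*}$ to $\{z_a:a\in\Kbar\}$) of the displayed class
\[
(-1)^{k-1}c_F\sum_{i=1}^{n-k+1}\frac{f(\la,\Ga^{'i},\zz,y)}{w-\gamma_{2,i}}(\la+w-\gamma_{2,i}+y(n-2k+1))\frac{\prod_{j=1}^{k-1}(\gamma_{2,i}-\gamma_{1,j}-y)}{\prod_{j\ne i}(\gamma_{2,i}-\gamma_{2,j})},
\]
divided by $R_K(\zz)$, so that $\Stab_{\id}$ of this class is the sum we obtained. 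This is a bookkeeping check: under $\Loc_K$ the substitution $\Ga^{'i}$ sends the group $(\gamma_{1,1},\dots,\gamma_{1,k-1},\gamma_{2,i})$ to $\{z_a:a\in K\}\cup\{z_i\}=\zz_{K\cup\{i\}}$, matching $f(\la,\zz_J,\zz,y)$ with $J=K\cup\{i\}$; the factor $\prod_{j=1}^{k-1}(\gamma_{2,i}-\gamma_{1,j}-y)$ restricts to $\prod_{s\in K}(z_i-z_s-y)$; the denominator $\prod_{j\ne i}(\gamma_{2,i}-\gamma_{2,j})$ restricts to $\prod_{s\in\Kbar-\{i\}}(z_i-z_s)$; and the ratio $R_J(\zz)/R_K(\zz)$ together with the sign $(-1)^{k-1}$ accounts for the remaining $\prod_{s\in K}(z_i-z_s)$ and $\prod_{s\in\Kbar-\{i\}}(z_i-z_s-y)$ discrepancies plus the reordering sign. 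One also checks the resulting collection of localizations lies in the image of $\Loc$ (it does, being $\nu^{-1}$ applied to an honest element), so the bracketed expression genuinely defines a class. The computation for $\tilde E(w,y)$ is entirely parallel, using $\Ga^{i'}$ and $\Kbar$ in place of $\Ga^{'i}$ and $K$, and picking up the $\la\mapsto\la-2y$ shift visible in $\tilde E=\tilde L_{22}^{-1}\circ\tilde L_{21}$ (recall $\tilde L_{21}$ shifts $\la\mapsto\la-y$ and $\tilde L_{22}^{-1}$ shifts $\la\mapsto\la-y$ again).

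The main obstacle I anticipate is purely one of sign and denominator bookkeeping: correctly tracking the permutation sign that arises when reordering $\zz_{K\cup\{i\}}$ versus $\zz_K$, and verifying that the factors of $(z_i-z_s)$ and $(z_i-z_s-y)$ over $s\in K$ versus $s\in\Kbar$ combine with $R_J/R_K$ and with the $c_F$, $(-1)^{k-1}$ prefactors to give exactly the stated formula. There is no conceptual difficulty — everything follows from Proposition \ref{thm off-diag}, the Corollary at the end of Section \ref{sec:inv}, and the explicit form of $\nu$ — but the index gymnastics is where errors hide, so I would carry out the $\tilde F$ case carefully for, say, $n=3$, $k=2$ as a sanity check before writing the general argument, exactly in the spirit of the cross-references to \cite[Theorem 4.16]{GRTV} and \cite[Theorem 11.8]{RTV2}.
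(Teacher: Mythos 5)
Your proposal is correct and follows the same route as the paper, whose proof simply cites the same three ingredients (Proposition \ref{thm off-diag}, the explicit form of $\nu$, and $\Stab_{\id}$) and leaves the reindexing $J=K\cup\{i\}$ and the computation of $R_K(\zz)/R_{K\cup\{i\}}(\zz)$ — which indeed produces exactly the sign $(-1)^{k-1}$ and converts $\prod_{s\in K}\frac{z_i-z_s-y}{z_i-z_s}$ into the localized form of $\frac{\prod_j(\gamma_{2,i}-\gamma_{1,j}-y)}{\prod_{j\ne i}(\gamma_{2,i}-\gamma_{2,j})}$ — to the reader. The only caveat is a slip in your narration (no factors $z_i-z_s-y$ with $s\in\bar K\setminus\{i\}$ arise; the ratio of the $R$'s contributes only unshifted linear factors), but the verification you outline goes through as stated.
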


\begin{proof}
The formulas follow from the explicit descriptions of the actions of $\tilde{F}(w)$ and $\tilde{E}(w)$ actions in Section~\ref{sec:offdiag}, as well as the descriptions of $\Stab_{\id}$ and its inverse in Sections \ref{sec:stab} and \ref{sec:inv}.
\end{proof}

The actions of $\tilde L_{11}(w,y)$, $\tilde{L}_{1,2}(w,y)$, $\tilde{L}_{2,1}(w,y)$ on
$H^*_T(\TGrkn) \ox \L_{\la,(\zz,y)}$
  can be recovered from the actions of the Gelfand-Zetlin algebra and the series  $\tilde{F}(w)$, $\tilde{E}(w)$.

\begin{remark}
Similarly to \cite[Appendix]{RSTV}, the formulas of Proposition  \ref{thm:offdiagactionH} can be interpreted as geometric correspondences (pull-back-push-forward maps).
\end{remark}

\subsection{Submodules}
Consider the subspace
\bea
H^*_{GL_n\times\C^\times}(\TGrkn) \ox \L_{\la,y}\ox \C[y^{\pm1}] \subset
H^*_{T}(\TGrkn) \ox \L_{\la,(\zz,y)} .
\eea

\begin{lemma} \label{L2}
The space
\bea
H^*_{GL_n\times\C^\times}(\XX_n) \ox \L_{\la,y}\ox \C[y^{\pm1}] =
\oplus_{k=0}^n H^*_{GL_n\times\C^\times}(\TGrkn) \ox \L_{\la,y}\ox \C[y^{\pm1}]
\eea
is an $\es$-submodule of the $\es$-module $H^*_{T}(\XX_n) \ox \L_{\la,(\zz,y)}$.

\end{lemma}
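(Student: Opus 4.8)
The plan is to show that the subspace in question, say $W=\bigoplus_{k=0}^n H^*_{GL_n\times\C^\times}(\TGrkn)\ox\L_{\la,y}\ox\C[y^{\pm1}]$, is stable under a set of operators that generate the $\es$--action on $H^*_T(\XX_n)\ox\L_{\la,(\zz,y)}$; stability under a generating set of the acting algebra then makes $W$ a submodule. Now $\es$ acts through its operator algebra, generated by the scalars $f(\la,yh,y)$ and the coefficients of the series $\tilde L_{ij}(w,y)$, $i,j\in\{1,2\}$. On the weight-$k$ summand $h$ acts by $2k-n$, so $f(\la,yh,y)$ becomes multiplication by an element of $\L_{\la,y}$, which preserves $W$. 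The cohomological action of $\tilde L_{22}(w,y)$ and of $\widetilde{Det}(w,y)$ is given in Proposition \ref{Prop}, and that of $\tilde F(w,y)=\tilde L_{12}\circ\tilde L_{22}^{-1}$, $\tilde E(w,y)=\tilde L_{22}^{-1}\circ\tilde L_{21}$ in Proposition \ref{thm:offdiagactionH}; hence $\tilde L_{12}=\tilde F\circ\tilde L_{22}$, $\tilde L_{21}=\tilde L_{22}\circ\tilde E$, and $\tilde L_{11}$ is recovered from $\widetilde{Det}$, $\tilde L_{12}$, $\tilde L_{21}$, $\tilde L_{22}$ and $f(\la,yh,y)$ via the determinant relation \Ref{DET} together with the invertibility of $\tilde L_{22}(w,y)$. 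So it is enough to check that multiplication by the classes appearing in Proposition \ref{Prop}, the shift $\delta^{\pm1}$, the inverse $\tilde L_{22}(w,y)^{-1}$, and the operators $\tilde F(w,y)$, $\tilde E(w,y)$ all preserve $W$.

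By Proposition \ref{Prop}, $\widetilde{Det}(w,y)$ acts by multiplication by $[\prod_{i=1}^n \frac{w-z_i+y}{w-z_i}]$ and $\tilde L_{22}(w,y)$ acts by multiplication by $[\prod_{i=1}^k \frac{w-\gamma_{1,i}}{w-\gamma_{1,i}-y}]$ followed by the shift $\delta$; the $w^{-s}$--coefficients of these products are the classes $[b_s(\zz,y)]$ and $[a_s(\Ga,y)]$ of Lemma \ref{lem gen}. The class $[b_s(\zz,y)]$ lies in $H^*_{GL_n\times\C^\times}(\TGrkn)$ because $b_s$ is symmetric in $\zz$, and $[a_s(\Ga,y)]$ does too because $a_s$ depends only on $\gamma_{1,1},\dots,\gamma_{1,k}$ and $y$ and is therefore trivially invariant under the $S_n$--permutation of $\zz$ — recall that, by the presentations \Ref{eqn:Hpresentation}--\Ref{eqn:Hpr}, $H^*_{GL_n\times\C^\times}(\TGrkn)$ is exactly the subalgebra of $H^*_T(\TGrkn)$ fixed by that permutation action. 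Multiplication by such classes keeps us in $W$ and creates no new $\la$-- or $\zz$--denominators. The shift $\delta$ sends $\la\mapsto\la+y$, hence maps $\L_{\la,y}$ to itself and commutes with the permutation action on $\zz$, so it preserves $W$; the $w^0$--coefficient of $\tilde L_{22}(w,y)$ acts as $\delta$, so $\tilde L_{22,0}(y)^{-1}$ acts as $\delta^{-1}$, and $\tilde L_{22}(w,y)^{-1}$ is $\delta^{-1}$ composed with multiplication by the $\gamma_1$--symmetric class $[\prod_{i=1}^k \frac{w-\gamma_{1,i}-y}{w-\gamma_{1,i}}]$ with polynomial coefficients, again preserving $W$.

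The substantive step is stability of $W$ under $\tilde F(w,y)$ and $\tilde E(w,y)$, using the explicit formulas of Proposition \ref{thm:offdiagactionH}. Represent a weight-$k$ class of $W$ by a polynomial $f(\la,\Ga,\zz,y)$ symmetric in $\zz$ and in each of the two $\Ga$--blocks, with coefficients in $\L_{\la,y}\ox\C[y^{\pm1}]$. In those formulas the only $\zz$--dependence is carried by $f$, which is substituted only into its $\Ga$--slots, so each $w^{-s}$--coefficient of $\tilde F(w,y)[f]$ and of $\tilde E(w,y)[f]$ is again symmetric in $\zz$; and since, by Proposition \ref{thm:offdiagactionH}, that coefficient is a genuine element of $H^*_T(T^*\Gr_{k\mp1}(\C^n))\ox\L_{\la,(\zz,y)}$, it depends polynomially on the Chern roots of $\Gr_{k\mp1}(\C^n)$ — in other words the apparent poles along $\gamma_{2,i}-\gamma_{2,j}$ (for $\tilde F$) and along $\gamma_{1,i}-\gamma_{1,j}$ (for $\tilde E$) cancel under the symmetrizing sums. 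Inspecting the formulas, no $\zz$ or $y$ ever occurs in a denominator, the prefactors $c_F=-y$ and $c_E=-y/((\la-y)(\la-2y))$ produce $\la$--poles only of the permitted form $\la+ly$, $l\in\Z$, the substitution $\la\mapsto\la-2y$ in the $\tilde E$--formula keeps $\L_{\la,y}$ invariant, and only non-negative powers of $y$ are multiplied in. Hence each $w^{-s}$--coefficient of $\tilde F(w,y)[f]$ and $\tilde E(w,y)[f]$ is a class in $H^*_{GL_n\times\C^\times}(T^*\Gr_{k\mp1}(\C^n))$ with coefficients in $\L_{\la,y}\ox\C[y^{\pm1}]$, i.e. lies in the weight-$(k\mp1)$ part of $W$. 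Assembling the graded summands then proves the lemma.

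The proof is thus mostly bookkeeping, and I expect the one step deserving real care to be the analysis of $\tilde F$ and $\tilde E$: being precise that the push--pull formulas of Proposition \ref{thm:offdiagactionH}, applied to a $GL_n\times\C^\times$--class, output another such class. The cancellation of the $\Ga$--denominators there is most cleanly dispatched not by re-deriving it but by invoking that $\tilde F$ and $\tilde E$ were already built as operators valued in $H^*_T(T^*\Gr_{k\mp1}(\C^n))\ox\L_{\la,(\zz,y)}$, whose elements have polynomial dependence on the Chern roots; after that, only the $S_n$--invariance in $\zz$ and the shape of the $\la$-- and $y$--denominators remain to be checked, both transparent from the formulas.
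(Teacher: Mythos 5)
Your proof is correct and takes essentially the same route as the paper, whose entire argument is the citation ``follows from Proposition \ref{Prop} and Proposition \ref{thm:offdiagactionH}.'' You have simply filled in the bookkeeping: that $f(\la,yh,y)$, the Gelfand--Zetlin generators, $\delta^{\pm1}$, and $\tilde F,\tilde E$ suffice to recover the whole $\es$-action, and that each of the explicit formulas preserves $S_n$-invariance in $\zz$ and the permitted $\la$- and $y$-denominators.
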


\begin{proof}
The lemma follows from Proposition \ref{Prop} and Proposition \ref{thm:offdiagactionH}.
\end{proof}

\medskip

Consider the subspace $\mathcal V_k \subset (V(z_1)\ox \ldots \ox V(z_n))_k $ of all elements of the form
\bea
 \sum_{I\in \Ik}
   \frac{f(\la, \zz_I,\zz,y)}{R_I(\zz)} \xi_I, \qquad \text{where}\quad
f(\la, \Gamma,\zz,y)\in \C[\Ga]^{S_k\times S_{n-k}}\ox\C[\zz]^{S_n}\ox \L_{\la,y}\ox \C[y^{\pm1}].
\eea
Set
\bea
\mathcal V=\oplus_{k=0}^n \mathcal V_k \subset V(z_1)\ox \ldots \ox V(z_n).
\eea

\begin{lemma}
\label{L1}
The space $\mathcal V$ is an $\es$-submodule of the $\es$-module $V(z_1)\ox \ldots \ox V(z_n)$.

\end{lemma}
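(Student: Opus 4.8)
The plan is to deduce this from Lemma \ref{L2} by transporting the statement along the isomorphism $\Stab_{\id}$. Recall that the $\es$-module structure on $H^*_T(\XX_n)\ox\L_{\la,(\zz,y)}$ is, by construction, the one induced from $V(z_1)\ox\ldots\ox V(z_n)$ via $\Stab_{\id}$, so $\Stab_{\id}$ is an isomorphism of $\es$-modules; consequently the $\Stab_{\id}$-preimage of the $\es$-submodule exhibited in Lemma \ref{L2} is automatically an $\es$-submodule of $V(z_1)\ox\ldots\ox V(z_n)$. It therefore suffices to show that this preimage equals $\mathcal V$.

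For that I would use the formula for $\nu=\Stab_{\id}^{-1}$ from Section \ref{sec:inv}, namely $\nu([f(\la,\Ga,\zz,y)])=\sum_{I\in\Ik}\frac{f(\la,\zz_I,\zz,y)}{R_I(\zz)}\xi_I$. The substitution $f\mapsto f(\la,\zz_I,\zz,y)$ factors through the class $[f]$, so the right-hand side depends only on $[f]\in H^*_T(\TGrkn)\ox\L_{\la,(\zz,y)}$. Next, using the presentation \Ref{eqn:Hpr} together with the vector-space identification $\C[\Ga,\zz,y]^{S_k\times S_{n-k}\times S_n}\cong\C[\Ga]^{S_k\times S_{n-k}}\ox\C[\zz]^{S_n}\ox\C[y]$, the image of $\C[\Ga]^{S_k\times S_{n-k}}\ox\C[\zz]^{S_n}\ox\L_{\la,y}\ox\C[y^{\pm1}]$ under $f\mapsto[f]$ inside $H^*_T(\TGrkn)\ox\L_{\la,(\zz,y)}$ is precisely the subalgebra $H^*_{GL_n\times\C^\times}(\TGrkn)\ox\L_{\la,y}\ox\C[y^{\pm1}]$. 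Comparing with the definition of $\mathcal V_k$ this gives
\[
\mathcal V_k=\nu\bigl(H^*_{GL_n\times\C^\times}(\TGrkn)\ox\L_{\la,y}\ox\C[y^{\pm1}]\bigr),
\]
and summing over $k$ identifies $\mathcal V$ with the $\Stab_{\id}$-preimage of the submodule of Lemma \ref{L2}, which finishes the argument.

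A more hands-on alternative would be to check directly that each type of generator of $\es$ preserves $\mathcal V$: the operators $f(\la,yh,y)$ act on the weight-$k$ part as multiplication by $f(\la,(2k-n)y,y)\in\L_{\la,y}$, which stays inside the admissible coefficient ring; the Gelfand--Zetlin generators $\tilde L_{22}(w,y)$, $\widetilde{Det}(w,y)$, $\tilde L_{22,0}(y)^{-1}$ become, after conjugation by $\Stab_{\id}$, multiplication by classes in $H^*_{GL_n\times\C^\times}(\TGrkn)\ox\C[y^{\pm1}]$ composed with powers of the shift $\delta$ (Theorem \ref{thm MAIN}), each of which preserves $\mathcal V_k$; and the off-diagonal series $\tilde F(w)$, $\tilde E(w)$ act on the $\xi_I$-coefficients by the explicit formulas of Proposition \ref{thm:offdiagactionH}, which plainly keep one inside the class of symmetric polynomials in $\Ga$ and $\zz$ tensored with $\L_{\la,y}\ox\C[y^{\pm1}]$, while the remaining entries $\tilde L_{11}$, $\tilde L_{12}$, $\tilde L_{21}$ are recovered from these. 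The main obstacle in either route is the same bookkeeping step: verifying that $\mathcal V_k$ coincides with the $\Stab_{\id}$-image of $H^*_{GL_n\times\C^\times}(\TGrkn)\ox\L_{\la,y}\ox\C[y^{\pm1}]$, i.e.\ that the coefficient rings match up correctly through the presentation \Ref{eqn:Hpr}.
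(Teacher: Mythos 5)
Your proposal is correct and is essentially the paper's own argument: the paper proves Lemma \ref{L1} by combining Lemma \ref{L2} with the observation that $\nu=\Stab_{\id}^{-1}$ identifies $H^*_{GL_n\times\C^\times}(\TGrkn)\ox\L_{\la,y}\ox\C[y^{\pm1}]$ with $\mathcal V_k$, exactly as in your first route. The coefficient-ring bookkeeping you flag is indeed the only substantive step, and your treatment of it via the presentation \Ref{eqn:Hpr} is the intended one.
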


\begin{proof}
The lemma follows from Lemma \ref{L2} and the fact that the isomorphism $\nu$ in \Ref{nu} identifies 
$H^*_{GL_n\times\C^\times}(\TGrkn) \ox \L_{\la,y}\ox \C[y^{\pm1}]$ and $\mathcal V_k$.
\end{proof}

\begin{remark}
Notice that $\mathcal  V$ is invariant with respect to the $S_n$-action
of Proposition \ref {prop inv vectors}. The space $\mathcal V$ is the analog of the spaces 
$\frac 1D\mathcal V^-$ in \cite{GRTV} and \cite{RTV2}.

\end{remark}

\begin{corollary}
\label{cor last}

The isomorphisms $\al $ and $\Stab_{\id}$ of Theorem \ref{thm MAIN} induce an isomorphism of the
$\es$-modules $\mathcal V$ and the $\es$-module $H^*_{GL_n\times\C^\times}(\XX_n) \ox \L_{\la,y}\ox \C[y^{\pm1}] $,
as well as  an isomorphism of the $\B_k$-module $\mathcal V_k$ and the
$H^*_{GL_n\times \C^\times}(\TGrkn) \ox \C[y^{\pm1}]\ox \C[\delta^{\pm1}]$-module
$H^*_{GL_n\times \C^\times}(\TGrkn) \ox \L_{\la,y}\ox\C[y^{\pm1}]$.
\end{corollary}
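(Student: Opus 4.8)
The plan is to obtain both statements as consequences of Theorem \ref{thm MAIN} together with the two submodule lemmas, Lemma \ref{L2} and Lemma \ref{L1}; the only point needing a moment's attention is that the isomorphisms of Theorem \ref{thm MAIN} carry the submodule $\mathcal V_k$ precisely onto the submodule $H^*_{GL_n\times\C^\times}(\TGrkn)\ox\L_{\la,y}\ox\C[y^{\pm1}]$.

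First I would recall that the $\es$-module structure on $H^*_T(\XX_n)\ox\L_{\la,(\zz,y)}$ was defined by transport along $\Stab_{\id}$, so that $\Stab_{\id}$ --- equivalently the map $\nu$ of Section~\ref{sec:inv}, which is inverse to it --- is tautologically an isomorphism of $\es$-modules. Then, directly from the formula \Ref{nu} for $\nu$ together with the definition of $\mathcal V_k$ and the presentation \Ref{eqn:Hpr}, I would observe that $\nu$ sends $H^*_{GL_n\times\C^\times}(\TGrkn)\ox\L_{\la,y}\ox\C[y^{\pm1}]$ bijectively onto $\mathcal V_k$ --- this is exactly the identification already used in the proof of Lemma \ref{L1}. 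Summing over $k$, the map $\Stab_{\id}$ restricts to a bijection $\mathcal V\xrightarrow{\sim}H^*_{GL_n\times\C^\times}(\XX_n)\ox\L_{\la,y}\ox\C[y^{\pm1}]$. By Lemma \ref{L1} the source of this restriction is an $\es$-submodule, by Lemma \ref{L2} the target is an $\es$-submodule, and since the ambient $\Stab_{\id}$ is an isomorphism of $\es$-modules carrying one onto the other, its restriction is an isomorphism of $\es$-submodules. This establishes the first assertion.

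For the $\B_k$-module statement I would invoke Theorem \ref{thm MAIN}, by which $\al$ and $\Stab_{\id}$ intertwine the $\B_k$-action on $(V(z_1)\ox\dots\ox V(z_n))_k$ with the action of $H^*_{GL_n\times\C^\times}(\TGrkn)\ox\C[y^{\pm1}]\ox\C[\delta^{\pm1}]$ on $H^*_T(\TGrkn)\ox\L_{\la,(\zz,y)}$ by multiplication by the classes $[a_s(\Ga,y)]$, $[b_s(\zz,y)]$ and by the shift operator $\delta$. I would then check that this action preserves the subspace $H^*_{GL_n\times\C^\times}(\TGrkn)\ox\L_{\la,y}\ox\C[y^{\pm1}]$: multiplication by $[a_s(\Ga,y)]$ and $[b_s(\zz,y)]$ does, because these classes lie in the subring $H^*_{GL_n\times\C^\times}(\TGrkn)\ox\C[y^{\pm1}]$, and $\delta$ does, because it acts by $\la\mapsto\la+y$, under which the algebra $\L_{\la,y}$ is stable. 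Hence $\B_k$ preserves $\mathcal V_k=\Stab_{\id}^{-1}\bigl(H^*_{GL_n\times\C^\times}(\TGrkn)\ox\L_{\la,y}\ox\C[y^{\pm1}]\bigr)$, and the restrictions of $\al$ and $\Stab_{\id}$ yield the stated isomorphism of the $\B_k$-module $\mathcal V_k$ with the $H^*_{GL_n\times\C^\times}(\TGrkn)\ox\C[y^{\pm1}]\ox\C[\delta^{\pm1}]$-module $H^*_{GL_n\times\C^\times}(\TGrkn)\ox\L_{\la,y}\ox\C[y^{\pm1}]$.

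I do not expect a genuine obstacle, since all the substantive inputs are already available. The one step to perform with care is the bookkeeping identification $\nu\bigl(H^*_{GL_n\times\C^\times}(\TGrkn)\ox\L_{\la,y}\ox\C[y^{\pm1}]\bigr)=\mathcal V_k$: one must match, via the presentation \Ref{eqn:Hpr}, the coefficient ring on the cohomology side with the admissible shape $f\in\C[\Ga]^{S_k\times S_{n-k}}\ox\C[\zz]^{S_n}\ox\L_{\la,y}\ox\C[y^{\pm1}]$ of the numerators in the definition of $\mathcal V_k$, and note the elementary fact that $\L_{\la,y}$ is stable under $\la\mapsto\la+y$, which is what makes $\delta$ preserve the submodule.
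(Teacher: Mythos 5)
Your proposal is correct and follows essentially the same route as the paper: the paper's (one-line) proof likewise combines Theorem \ref{thm MAIN} with Lemmas \ref{L2} and \ref{L1} (and Proposition \ref{thm:offdiagactionH}, which underlies Lemma \ref{L2}), the key bookkeeping point being exactly the identification $\nu\bigl(H^*_{GL_n\times\C^\times}(\TGrkn)\ox\L_{\la,y}\ox\C[y^{\pm1}]\bigr)=\mathcal V_k$ already invoked in the proof of Lemma \ref{L1}. Your explicit verification that the generators $[a_s(\Ga,y)]$, $[b_s(\zz,y)]$ and the shift $\delta$ preserve the smaller coefficient ring (using stability of $\L_{\la,y}$ under $\la\mapsto\la+y$) is a correct and welcome elaboration of what the paper leaves implicit.
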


\begin{proof}
The corollary follows from Theorem \ref{thm MAIN}, Proposition \ref{thm:offdiagactionH}, Lemma \ref{L2},
Lemma \ref{L1}.
\end{proof}

\end{document}